\newcommand{\supp}{\mbox{$\mathbf{s}$}}
\theoremstyle{plain}
\newtheorem{Thm}{Theorem}[section]
\newtheorem{Cor}[Thm]{Corollary}
\newtheorem{Lem}[Thm]{Lemma}
\newtheorem{Prop}[Thm]{Proposition}
\newtheorem{corollary}[Thm]{Corollary}
\newtheorem{lemma}[Thm]{Lemma}
\theoremstyle{definition}		
\newtheorem{defn}[Thm]{Definition}
\newtheorem{Rem}[Thm]{Remark}
\begin{document}

\title[{Finitary conditions for graph products of monoids}]%
{{Finitary conditions for graph products of monoids}}

\author{Dandan Yang}
\address{School of Mathematics and Statistics, Xidian University, Xi'an 710071, P. R. China }
\email{ddyang@xidian.edu.cn}
\author{Victoria Gould}

\address{Department of Mathematics, University of York, York YO10 5DD, UK}
\email{victoria.gould@york.ac.uk}

\keywords{Monoid,   weakly left noetherian, left ideal Howson, finitely left equated}

\subjclass[2020]{20M05, 20M10, 20M30}

\thanks{This work was  supported by the Engineering and Physical Sciences Research Council Grant EP/V002953/1, the National Natural Science Foundation of China Grant 12171380,  the Natural Science Basic Research Program of Shaanxi Province Grant 2023-JC-JQ-04,  the Shaanxi Fundamental Science Research Project for Mathematics and Physics Grant  22JSQ034, the Fundamental Research Funds for the Central Universities Grant QTZX25093, and Xidian University Specially Funded Project for Interdisciplinary Exploration Grant TZJH2024007.} 

\begin{abstract} Graph products of monoids provide a common framework for free products and direct products.  This paper   investigates the interaction of certain finitary conditions with the graph product construction. Specifically, we examine the conditions of being weakly left  noetherian (that is, every left ideal is finitely generated) and weakly left coherent (that is, every finitely generated left ideal has a finite presentation)
and the related conditions of the  ascending chain condition on principal left ideals, being left ideal Howson, and being finitely left equated.  All of these conditions, and others, are preserved under retract; as a consequence, if a graph product has such a property, then so do all the constituent  monoids. We show that the converse is also true for  all the conditions listed except that of being weakly left noetherian. In the latter case we precisely determine those graph products of monoids which are weakly left noetherian. 
\end{abstract}

\maketitle

\section{Introduction}
A finitary condition for a class of algebras  is one possessed by all its finite members: this paper examines finitary conditions in the context of  graph products of monoids. Let $\Gamma=(V,E)$ be a simple graph and let $\mathcal{M}=\{ M_\alpha: \alpha\in V\}$ be a set of monoids, referred to as vertex monoids; we do not assume that $\Gamma$ is finite. The graph product of $\mathcal{M}$ with respect to $\Gamma$ is the freest monoid generated by  $\bigcup_{\alpha\in V}M_{\alpha}$ such that each $M_{\alpha}$ is a submonoid, and elements in monoids
$M_{\alpha}$ and $M_\beta$ commute if $(\alpha,\beta)\in E$. A formal definition in terms of presentations is given in Section~\ref{sec:prelim}.  Free products and restricted direct products (that is, the submonoid of elements of direct products with finite support) are instances of graph products at opposite extremes.  In the former $E=\emptyset$, so that non-identity elements in distinct vertex monoids do not commute. In the latter $E=V\times V$, so that all elements in distinct vertex monoids commute. There are many sources and motivations for  graph products; in the form given here the concept was introduced by Green \cite{Green:1990} for groups and by da Costa \cite{dacosta:2001} for monoids. Trace monoids  and right angled Artin groups, where $\Gamma$ is finite, and the vertex monoids are 
finitely generated free monoids or free groups, respectively, provide important instances.  The former were introduced in the study of combinatorial problems for rearrangements of words; they have been extensively studied by mathematicians and computer scientists (see \cite{diekert:1990,duncan:2020}).

It is a  natural question  whether certain monoid properties pass between a graph product and its vertex monoids. Algorithmic properties have been extensively considered \cite{dacosta:2001,Green:1990,hermiller:1995} as have algebraic properties \cite{barkauskas:2007,fountain:2009,gould:2023}.  Recently, the authors, together with Cho and Ru\v{s}kuc, have shown that a graph product of monoids is residually finite if and only if so is each vertex monoid \cite{cho:2024}. 
In this article we again address  finitary conditions, in this case those related to noetherianity and coherency. 

Recall that a ring is left noetherian if every left ideal is finitely generated and is left coherent if every finitely generated left ideal has a finite presentation. For monoids there are two natural choices for replacements for each of these notions. Those depending on  left congruences are known as  left noetherianity and left coherency and those depending on left ideals  as weak  left  noetherianity and  weak left   coherency. In terms of understanding their behaviour with respect to direct products, let alone graph products, left coherency and left noetherianity  are mysterious. The direct product of two left coherent monoids need not be left coherent, although the direct product of a left coherent monoid with a finite monoid is  \cite{gould:2020}. It is not known whether an arbitrary  direct product of left noetherian monoids is left noetherian, although again it is known that the direct product of a left noetherian monoid with a finite monoid is  again left noetherian \cite{miller:2019}. 

We are therefore led  to examine weak left noetherianity  and weak left coherency. The former property again does not pass up  perfectly to a graph product from the vertex  monoids.   For the graph product to be weakly left noetherian, in addition to each  vertex monoid being weakly left noetherian  all but finitely many   must be groups.  Further,  the  non-group  vertex monoids must be adjacent to all but finitely many vertex monoids; the precise conditions are   stated in  Theorem~\ref{wln}. Via a standard argument, a monoid is left noetherian if and only if it has the ascending chain condition on left ideals. If we consider the ascending chain condition on {\em principal} left ideals (ACCPL), then we do have a completely positive result - Theorem \ref{acc} shows that a graph product has  ACCPL   if and only if so does each vertex monoid. The situation for weak left coherency is equally pleasing. Our main result, Theorem~\ref{thm:wlc}, gives that a graph product of monoids is weakly left coherent if and only if so is each vertex monoid. This we do by examining two finitary conditions of importance in their own right: that of being left ideal Howson and that of being
finitely left equated (FLE). The former is also called being finitely (left) aligned \cite{exel:2018} and the latter is equivalent to  monogenic left monoid acts having finite presentations \cite{dasar:2024};  their conjunction is equivalent to being weakly left coherent \cite{gould:1992}. In Theorems~\ref{thm:howson} and \ref {thm:aagh} respectively we show that  a graph product of monoids is left ideal Howson (respectively, FLE) if and only if so is each vertex monoid. Consequently, we deduce Theorem~\ref{thm:wlc}, which says that a graph product of monoids is weakly left coherent  if and only if so is each vertex monoid.

 Figure~\ref{fig:theprops}  indicates the relationships between the finitary conditions considered. Note that each condition has a left-right dual for which the corresponding left-right dual result  holds. For convenience we omit the reference to `left' below.

\begin{figure}[H] 
\centering

\begin{tikzpicture}[scale=1.1]
\draw (0,0)--(0,4);\draw (0,4)--(6,4);\draw (6,4)--(6,0);\draw (6,0)--(0,0);\draw (0,3)--(6,3);\draw (0,2)--(6,2);\draw (1.5,0)--(1.5,2);\draw (3,1)--(3,3);\draw (3,1)--(6,1);\draw (4.5,0)--(4.5,1);

\coordinate(A_1) at(3.7,3.5);\coordinate(A_2) at(3,2.5);\coordinate(A_3) at(5.2,2.5);\coordinate(A_4) at(1.3,1);\coordinate(A_6) at(4.3,0.5);\coordinate(A_7) at(5.9,1.5);\coordinate(A_8) at(5.7,0.5);\coordinate(A_9) at(6.5,2);

\node at(A_1) [left]{\small{noetherian}};\node at(A_2) [left]{\small{weakly ~noetherian}};\node at(A_3) [left]{\small{coherent}};\node at(A_4) [left]{\small{ACCP}};\node at(A_6) [left]{\small{ideal~Howson}};\node at(A_7) [left]{\small{weakly ~coherent}};\node at(A_8) [left]{\small{FE}};\node at(A_9) [right]{\small{weakly coherent $=$ ideal~Howson $+$ FE}};
\end{tikzpicture}
\captionsetup{margin={-6.3cm,0cm}}
\caption{Implications flow downwards}         
\label{fig:theprops}
\end{figure}
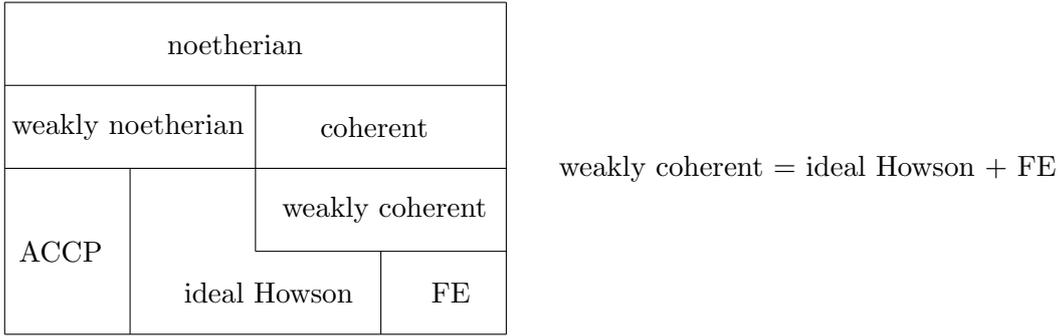 

All the properties in Figure~\ref{fig:theprops} are preserved under retract and consequently, if a graph product of vertex monoids has any of these conditions, then so do all the vertex monoids. We refer to \cite{gould:2020} for this fact for left coherency and \cite{miller:2019} for left noetherianity. We give references or brief proofs for the other conditions as we meet them later in the paper.

The structure of the paper is as follows. We give the necessary preliminaries for graph products in Section~\ref{sec:prelim}. The precise formulations of each of the finitary conditions we consider will appear at the start of the relevant sections. In Section~\ref{sec:AAPL} we consider
ACCPL and prove it is preserved by taking graph products. Section~\ref{sec:wln} focuses on determining the situation in which the graph product of weakly left noetherian monoids is weakly left noetherian; exceptionally for this article, this depends on the nature of the graph, and not just the vertex monoids. For the purposes of the more intricate arguments surrounding weak left coherency, we present some technical tools in Section~\ref{sec:red}.
We then show in Sections \ref{sec:leftidealH} and \ref{sec:L} that being left ideal Howson or FLE is always preserved under the operation of taking graph product. Finally, we put these together in  Section~\ref{sec:wlc} to give the desired result for weak left coherency.

\section{Preliminaries}\label{sec:prelim}

We outline the notions and basic results required to read this article.  For further details we recommend \cite{howie:1995} for semigroups, and \cite{dacosta:2001, gould:2023, Green:1990}  for graph products of groups or  monoids.

\subsection{Monoid presentations}\label{sub:pres}  Let $X$ be a set. The  {\em free monoid} $X^*$ on $X$ consists of all words over $X$ with operation of juxtaposition.
We denote a  non-empty word by $x_1\circ \dots \circ x_n$ where $x_i\in X$ for $1\leq i\leq n$; here $n$ is the {\em length} of the word. We  also
 use $\circ$ for juxtaposition of words. The empty word is denoted by $\epsilon$ and is the identity of $X^*$. Throughout, our convention is that if we say $x_1\circ\dots \circ x_n\in X^*$, then we mean $x_i\in X$ for all $1\leq i\leq n$, unless we explicitly say otherwise. A {\it subword} of $x=x_1 \circ \cdots \circ x_n$ is defined as a word with a form $y=x_{i_1}\circ \cdots \circ x_{i_m}$ where $1\leq i_1<i_2<\cdots<i_m\leq n$; we do not assume that $y$ is a factor of $x$ in $X^*$.

A {\em monoid presentation} $\langle X\mid R\rangle$, where $X$ is a set and $R\subseteq  X^*\times X^*$, determines the monoid $X^*/R^{\sharp}$, where $R^{\sharp}$ is the congruence on $X^*$ generated by $R$. In the usual way, we identify $(u,v)\in R$ with the formal equality $u=v$ in a presentation $\langle X\mid R\rangle$. We denote elements of the quotient by $[w]$, where $w\in X^*$.

\subsection{Graph products of monoids}  \label{sub:gp}

In this article, all graphs are simple, that is, they are undirected with no multiple edges or loops.
   Let $\Gamma=(V,E)$ be a  graph;
here $V$ is the  non-empty set of {\em vertices} and  $E$ is  the set of {\em edges} of  $ \Gamma$. We denote an edge joining vertices $\alpha$ and $\beta$ by
$(\alpha,\beta)$ or (since our graph is undirected) by $(\beta,\alpha)$.

 \begin{defn}\label{defn:graphprodmonoids} \cite{dacosta:2001, Green:1990}  Let
$\Gamma=(V,E)$ be  graph and let $\mathcal{M}=\{M_\alpha: \alpha\in V\}$ be a set of mutually disjoint monoids.  For each $\alpha\in V$, we write $1_{\alpha}$ for the identity of $M_{\alpha}$ and put $I=\{ 1_{\alpha}: \alpha\in V\}$.
The {\em graph product} $\mathscr{GP}=\mathscr{GP}(\Gamma,\mathcal{M})$ of
 $\mathcal{M}$   with respect to $\Gamma$ is defined by the monoid presentation
 \[\mathscr{GP}=\langle X\mid R  \rangle\]
 where $X=\bigcup_{\alpha\in V}M_\alpha$ and the relations in $R= R_{\textsf{id}}\cup R_{\textsf{v}}\cup R_{\textsf{e}}$ are given by:
\[\begin{array}{rcl}R_{\textsf{id}}&=&\{   1_\alpha=\epsilon:  \alpha\in V\},\\

R_{\textsf{v}}&=&\{   x\circ y=xy:  \ x,y\in M_\alpha,\alpha\in V\},\\

R_{\textsf{e}}&=& \{ x\circ y=y\circ x: x\in M_\alpha,\, y\in M_\beta, (\alpha,\beta)\in E)\}.\end{array}\]
\end{defn}

The monoids $M_{\alpha}$ in Definition~\ref{defn:graphprodmonoids} are  referred to as \textit{vertex monoids}. We may also say for brevity that $\mathscr{GP}(\Gamma,\mathcal{M})$ is the {\em  graph product of the monoids $M_{\alpha},\alpha\in V$}. We use the notation $\equiv$ for $R^{\sharp}$ and say that
$u$ is {\em equivalent} to $v$ if $[u]=[v]$, that is, if $u\equiv v$. We may use $1$ to denote the identity of $\mathscr{GP}$, that is, the equivalence class
$[\epsilon]$.

\begin{Rem}\label{rem:freeprod} Let $\mathcal{M}=\{M_\alpha: \alpha\in V\}$ be a set of mutually disjoint monoids.
\begin{enumerate}
\item[$\bullet$]  If $E=\emptyset$ then the  graph product $\mathscr{GP}(\Gamma,\mathcal{M})$  is isomorphic to the  free product of the monoids in $\mathcal{M}$.

\item[$\bullet$] If $E=V\times V$ then the  graph product $\mathscr{GP}(\Gamma,\mathcal{M})$  is isomorphic to the  restricted direct product of the monoids in $\mathcal{M}$, that is, to the submonoid of elements of the direct product with only finitely many non-identity values, and so to the direct product in the case where $|V|<\infty$.

\item[$\bullet$] If the monoids in $\mathcal{M}$ are all free monoids, then $\mathscr{GP}(\Gamma,\mathcal{M})$ 
is a {\em free partially commutative monoid}; if in addition $\Gamma$ is finite and each vertex monoid is finitely generated, then $\mathscr{GP}$ is a   {\em trace monoid}  (see, for example, \cite{diekert:1990}).

\item[$\bullet$] If the monoids in $\mathcal{M}$ are all free groups, then $\mathscr{GP}(\Gamma,\mathcal{M})$ is a {\em free partially commutative group}; if in addition $\Gamma$ is finite and each vertex group is finitely generated, then $\mathscr{GP}$ is a {\em right angled Artin group} (see, for example, \cite{duncan:2020}).

\item[$\bullet$] The graph product $\mathscr{GP}'=\mathscr{GP}(\Gamma', \mathcal{M}')$ of $\mathcal{M}'=\{M_\alpha: \alpha\in V'\}$ with respect to $\Gamma'=V\backslash \{\alpha\in V: |M_\alpha|=1\}$ is isomorphic to $\mathscr{GP}(\Gamma,\mathcal{M})$.

\item[$\bullet$] If $\Gamma'=\Gamma'(V', E')$ is  an induced subgraph of $\Gamma$, then the graph product $\mathscr{GP}'(\Gamma', \mathcal{M}')$ of $\mathcal{M}'=\{M_\alpha: \alpha\in V'\}$ with respect to $\Gamma'$ is a retract of $\mathscr{GP}(\Gamma,\mathcal{M})$ (see, \cite[Proposition 2.3]{gould:2023}). 

\item[$\bullet$] By taking $V'=\{\alpha\}$ above we have that $M_\alpha$ is naturally embedded in $\mathscr{GP}(\Gamma,\mathcal{M})$
under $a\mapsto [a]$ and the image is a retract of $\mathscr{GP}(\Gamma,\mathcal{M})$.
\end{enumerate}
\end{Rem}

For the remainder of this article  $\mathscr{GP}=\mathscr{GP}(\Gamma,\mathcal{M})$ will be given as in Definition~\ref{defn:graphprodmonoids} and we will  assume that each vertex monoid $M_\alpha$ $(\alpha\in V)$ is  non-trivial.

\subsection{Reduced words}\label{sub:reduced}

\begin{defn}\label{defn:shuffle} We refer to an application of a relation in $R_{\textsf{e}}$ to a word $w\in X^*$ as a {\em shuffle}. If $a\in M_{\alpha}$ and $b\in M_{\beta}$ and $(\alpha,\beta)\in E$, then we say that $a$ {\em shuffles with} $b$.
Two words in $X^*$ are {\it shuffle equivalent} if one can be obtained from the other by a series of shuffles.
\end{defn}

It is clear that shuffle equivalent words have the same length.

\begin{defn}\label{def of reduced} \label{defn:reducedform}  We refer to an application of a relation in $R$ to a word $w\in X^*$ that reduces
the length of $w$ as a {\em reduction}.
A word $w\in X^*$ is {\em reduced} if it is not possible to apply a reduction step to any word shuffle equivalent to $w$.

If $u\in X^*$ and $[u]=[w]$ for a reduced word $w\in X^*$, then we say that $w$ is a {\em reduced form} of $u$.
\end{defn}

\begin{Thm}\label{shuffle}\cite{fountain:2009} 
Every element of $\mathscr{GP}$ is represented by a reduced word. 
An element $x\in [w]$ is of minimal length if and only if it is reduced. Two reduced words represent the same element of $\mathscr{GP}$ if and only if they are shuffle equivalent.
\end{Thm}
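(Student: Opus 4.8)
The plan is to isolate the only genuinely hard assertion and reduce everything else to it. Existence of a reduced representative is immediate: starting from any $w\in X^*$, as long as $w$ is not reduced, some word shuffle equivalent to it admits a length-reducing relation from $R$ (necessarily one in $R_{\textsf{id}}$ or $R_{\textsf{v}}$); applying it and iterating must terminate, because the length is a non-negative integer that strictly decreases, while every step preserves the class $[w]$. The ``if'' direction of the final biconditional is trivial, since a shuffle is an application of a relation in $R_{\textsf{e}}\subseteq R$, so shuffle equivalent words are certainly $\equiv$-equivalent. Moreover, once we know that any two reduced words representing the same element are shuffle equivalent, the middle assertion follows at once: shuffle equivalent words have equal length, so all reduced representatives of a fixed element share a common length, which is then visibly minimal in the class (a strictly shorter representative could be reduced further, contradicting minimality of the reduced length). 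Thus the whole theorem rests on the claim: if $u,v$ are reduced and $u\equiv v$, then $u$ and $v$ are shuffle equivalent.

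To prove this I would use a van der Waerden style normal form action. Let $N$ denote the set of shuffle equivalence classes of reduced words, and for each generator $a\in M_\alpha$ define a map $\rho_a\colon N\to N$ modelling right multiplication by $a$ followed by reduction. Concretely, given a reduced word $w$, I would ask whether $w$ is shuffle equivalent to a word $u\circ b\circ v$ with $b\in M_\alpha$ and every letter of $v$ commuting with $b$ (so that $b$ can be shuffled to the right-hand end). If no such $b$ exists, set $\rho_a([w])=[w\circ a]$ (for $a\neq 1_\alpha$); if such a $b$ exists, then $w\circ a\equiv u\circ(ba)\circ v$ with $ba$ computed in $M_\alpha$, and set $\rho_a([w])$ to be the class of $u\circ(ba)\circ v$ when $ba\neq 1_\alpha$, or of $u\circ v$ when $ba= 1_\alpha$. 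One extends $a\mapsto\rho_a$ (with $\rho_{1_\alpha}=\mathrm{id}$) to a homomorphism $\rho\colon X^*\to \mathcal{T}_N$ into the monoid of self-maps of $N$, composing on the side that makes applying $\rho(w)$ to the class of the empty word return the class of a reduced form of $w$.

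The hard part will be the book-keeping that makes $\rho_a$ well defined and shows it respects the defining relations. The crucial point is a uniqueness lemma: in a reduced word, at most one letter lying in a given vertex monoid $M_\alpha$ can be shuffled to the end, because two distinct $M_\alpha$-letters would have to commute past one another, which is impossible since $\Gamma$ has no loops. This guarantees that the letter $b$ above, and the resulting value $ba$, depend only on the shuffle class $[w]$ and not on the chosen representative; a similar local analysis shows the output words $u\circ(ba)\circ v$ and $u\circ v$ are again reduced, using that any reduction newly available after deleting $b$ could already have been performed in $w$ (a letter of $v$ reaching a letter of $u$ must first commute past $b$). With well-definedness in hand, verifying $\rho(u)=\rho(v)$ for every relation $u=v$ in $R$ is a finite case check: $\rho_{1_\alpha}=\mathrm{id}$ handles $R_{\textsf{id}}$, the identity $\rho_x\rho_y=\rho_{xy}$ for $x,y\in M_\alpha$ handles $R_{\textsf{v}}$, and $\rho_x\rho_y=\rho_y\rho_x$ for adjacent vertices handles $R_{\textsf{e}}$.

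Finally, since $\rho$ respects $R$, it factors through the congruence $R^{\sharp}$ (recall $\equiv$ denotes $R^{\sharp}$), giving a well-defined map on $\mathscr{GP}$. If $u,v$ are reduced with $u\equiv v$, then $\rho(u)$ and $\rho(v)$ agree, so they send the class of the empty word to the same element of $N$; but that element is the shuffle class of $u$ in the one case and of $v$ in the other, each being already reduced. Hence $u$ and $v$ are shuffle equivalent, which completes the proof. An alternative route would be to show directly that the rewriting system of length-reducing rules is confluent modulo shuffling, via Newman's lemma and a critical-pair analysis; I prefer the action argument, as it localises all the combinatorics into the single uniqueness lemma.
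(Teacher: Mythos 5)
The paper offers no proof of Theorem~\ref{shuffle}: it is quoted from \cite{fountain:2009}, the normal form theorem going back to \cite{Green:1990} for groups and \cite{dacosta:2001} for monoids, so there is no in-paper argument to compare with. Judged on its own terms, your reconstruction is essentially correct, and it follows the classical route for such results: a van der Waerden style action on shuffle classes of reduced words (the rewriting-system alternative you mention at the end is the other standard proof, cf.\ \cite{hermiller:1995}). Your reductions of the first two assertions to the hard biconditional are sound, and you localise the combinatorics correctly: uniqueness of the shufflable $M_\alpha$-letter is exactly Remark~\ref{rem:allimportant} together with the absence of loops, and reducedness of the outputs holds because any new reduction in $u\circ v$ would pair a letter of $v$, whose support is adjacent to $\supp(b)$, with a letter of $u$, hence was already available in $u\circ b\circ v$. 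Two points that your ``finite case check'' glosses over, both fixable: first, well-definedness of $\rho_a$ needs not just uniqueness of the occurrence $b$ but that the output class is independent of the representative; this holds because shuffle moves depend only on supports, so any shuffle sequence transports across replacing $b$ by $ba$ or deleting $b$ (moves involving $b$ become no-ops). Second, in verifying $\rho_x\rho_y=\rho_{xy}$ for $x,y\in M_\alpha$ in the sub-case $bx=1_\alpha$, you need (a) that deleting $b$ creates no new shufflable $M_\alpha$-letter in $u\circ v$ --- true, since such a letter together with $b$ would witness a reduction in $w$ itself, contradicting reducedness --- and (b) when additionally $xy=1_\alpha$, that the appended letter matches the deleted one, i.e.\ $y=b$, which is the monoid identity $b=b(xy)=(bx)y=y$. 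With these made explicit, your argument goes through and constitutes a complete proof of the cited theorem.
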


\begin{defn}\label{defn:support}
Let $w=x_1\circ \dots \circ x_n\in X^*$ with $x_i\in M_{\alpha_i}$ for all $1\leq i\leq n$. We call the
set $\{\alpha_1, \dots, \alpha_n\}$ the  {\it support} of $w$ and denote it by $\supp(w)$.  If $\supp(w)=\{ \alpha\}$ is a singleton, then for the sake of notational convenience we may identify $\supp(w)$
with the vertex $\alpha$.
\end{defn}

It follows from the construction that a word  $w=x_1\circ \cdots \circ x_n\in X^*$ is  reduced if and only if for all $1\leq i\leq n$, $x_i\not \in I$, and for all $1\leq i< j\leq n $ with  $\supp(x_i)=\supp(x_j)$, there exists some $i<k<j$ with $(\supp(x_i), \supp(x_k))\not \in E$.

We now make a crucial observation, which stems from the fact that in any shuffle of a word, two letters with the same support cannot have their order transposed. 

\begin{Rem}\label{rem:allimportant} Let $x=x_1\circ \cdots \circ x_n\in X^*$ and let $\alpha\in \supp(x)$. Let  $ \{ i_1,\cdots, i_k\}$ label the positions where $\supp(x_i)=\alpha$,  where
 $i_1<i_2\cdots <i_k$. Then in any shuffle of $x$ the subword $x_{i_1}\circ\cdots\circ x_{i_k}$ will appear. Thus $x_{i_j}$ will always
be the $j$'th occurrence of a  letter with support $\alpha$ (reading from left to right) in any shuffle of $x$.\end{Rem}

\begin{Prop}\label{easy observation}
Let $x=x_1\circ \cdots \circ x_n\in X^*$ be reduced, and let $\sigma$ be a  permutation  of $[1, n]$. Then $x$  shuffles to $ y=x_{1\sigma}\circ \cdots \circ x_{n\sigma}$ if and only if for any $m<k$ and $k\sigma<m\sigma$ we have  $(\supp(x_{k\sigma}),\supp(x_{m\sigma}))\in E$.
\end{Prop}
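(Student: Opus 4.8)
The plan is to recognise this as the classical characterisation of shuffle (trace) equivalence in terms of inversions: the reduced word $x$ is shuffle equivalent to its rearrangement $y=x_{1\sigma}\circ\dots\circ x_{n\sigma}$ precisely when every pair of letters whose relative order gets reversed has adjacent (hence commuting) supports. I would prove the two implications separately — the forward one by tracking the relative order of a fixed pair of letters through a sequence of shuffles, and the converse by induction on the number of inversions of $\sigma$.

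For the forward implication, suppose $x$ shuffles to $y$ through a sequence of single shuffles. The key observation is that a single shuffle transposes two \emph{adjacent} letters, and such a transposition alters the relative order of a given pair of letters if and only if it transposes exactly that pair; transposing any other adjacent pair leaves the relative order of our fixed pair intact. Hence if letters $x_{k\sigma}$ and $x_{m\sigma}$, with $m<k$ but $k\sigma<m\sigma$ (so that their order in $y$ is the reverse of their order in $x$), end up reversed, then somewhere along the way at least one shuffle must have transposed them directly; by the definition of a shuffle this forces $(\supp(x_{k\sigma}),\supp(x_{m\sigma}))\in E$, which is exactly the stated condition. Note that same-support pairs are never reversed, by Remark~\ref{rem:allimportant}, which is consistent with the absence of loops in $\Gamma$.

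For the converse, set $N(\sigma)=|\{(m,k): m<k,\ k\sigma<m\sigma\}|$, the number of inversions, and induct on $N(\sigma)$. If $N(\sigma)=0$ then $\sigma=\id$ and $y=x$. If $N(\sigma)>0$, the sequence $1\sigma,\dots,n\sigma$ is not increasing, so it has an adjacent descent: some $m$ with $(m+1)\sigma<m\sigma$. Then $(m,m+1)$ is an inversion, so by hypothesis the two letters $x_{m\sigma}$ and $x_{(m+1)\sigma}$, which are adjacent in $y$, have commuting supports and may be shuffled. Let $y'$ be the resulting word, realising a permutation $\sigma'$. Viewing an inversion intrinsically as an unordered pair of letters whose relative order in the rearranged word differs from that in $x$, swapping two adjacent letters changes the relative order of exactly that one pair and of no other; hence the inversions of $y'$ are precisely those of $y$ with the pair $\{x_{m\sigma},x_{(m+1)\sigma}\}$ deleted. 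Thus $N(\sigma')=N(\sigma)-1$ and $\sigma'$ still satisfies the hypothesis, so by induction $x$ shuffles to $y'$; a single further shuffle takes $y'$ back to $y$, whence $x$ shuffles to $y$.

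The main obstacle is the bookkeeping in the converse: verifying that a single adjacent shuffle removes exactly one inversion while preserving the hypothesis for all remaining pairs. The cleanest way to handle this, which I would emphasise, is to phrase inversions intrinsically in terms of the original positions of the letters rather than in terms of positions within $y$, so that the statement ``swapping two adjacent letters changes only their own relative order'' becomes transparent and the induction goes through with no case analysis.
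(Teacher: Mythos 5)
Your proof is correct. The forward direction is essentially the paper's argument: both track a reversed pair through the shuffle sequence and observe that the two letters must at some point cross, which by the definition of a shuffle forces the edge condition; your version is slightly more precise, since you make explicit that a crossing can only occur via a shuffle transposing exactly that adjacent pair. Your converse, however, takes a genuinely different route. The paper inducts on the length $n$ of the word: it peels off the last position, either directly (when $n\sigma=n$) or by first shuffling the letter $x_{d\sigma}$ with $d\sigma=n$ past everything to its right (using the hypothesis to justify each swap), and then applies the inductive hypothesis to an explicitly constructed permutation $\delta$ of $[1,n-1]$. You instead induct on the inversion number $N(\sigma)$, running a bubble-sort: locate an adjacent descent in $y$, swap it (legitimate since the descent is an inversion, so the hypothesis supplies the edge), and check that exactly one inversion disappears while the hypothesis persists for the rest. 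Your approach buys a cleaner invariant --- ``one swap removes exactly one inversion'' is transparent once inversions are phrased intrinsically via original positions, as you note, and there is no case split or relabelling of permutations --- at the cost of working backwards from $y$, which is harmless since shuffling is symmetric. The paper's length induction avoids reasoning about inversion counts altogether but pays for it with the bookkeeping around $\delta$ and the two cases $n\sigma=n$ versus $n\sigma<n$. One small point both arguments rely on, which you correctly flag: an inversion pair with equal supports would contradict the hypothesis (no loops in $\Gamma$), consistent with Remark~\ref{rem:allimportant}, so the token-tracking across shuffles is unambiguous.
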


\begin{proof}
Suppose that $x$ shuffles to $y=x_{1\sigma}\circ \cdots \circ x_{n\sigma}$. Notice that  $i\sigma=j$ indicates that the letter $x_j$ (in the $j$'th place of $x$) has shuffled in the shuffle from $x$ to $y$ to the $i$'th place of $y$. Assume that  $m<k$ and $k\sigma<m\sigma$.
In the shuffling process,  we must shuffle the letter $x_{m\sigma}$ to the $m$'th place, and the letter
$x_{k\sigma}$ to the $k$'th place. But as $k\sigma<m\sigma$ and $m<k$ we must in this process move the letter $x_{m\sigma}$
to the left of the letter $x_{k\sigma}$, which we can only do if $(\supp(x_{k\sigma}),\supp(x_{m\sigma}))\in E$.

We show the converse by induction on the length $|w|$ of $w$. The result holds vacuously when $|w|=1$.

Assume that the result holds for all $w$ with $|w|<n$. Suppose that for any $k>m$ and $k\sigma<m\sigma$ we have $(\supp(x_{k\sigma}),\supp(x_{m\sigma}))\in E$. If $n=n\sigma$, then by the inductive hypothesis $x_1\circ \cdots \circ x_{n-1}$ shuffles to $x_{1\sigma}\circ \cdots \circ x_{(n-1)\sigma}$, and so $w$ shuffles to $x_{1\sigma}\circ \cdots \circ x_{n\sigma}$.

If $n\sigma=\ell<n$, then there exists $1\leq d\leq n-1$ such that $d\sigma=n$. Notice that $d\sigma>h\sigma$ for all $h>d$, implying that  $(\supp(x_{d\sigma}), \supp(x_{h\sigma}))\in E$. Therefore, $x_{1\sigma}\circ \cdots \circ x_{n\sigma}$ shuffles to $x_{1\sigma}\circ \cdots\circ x_{(d-1)\sigma}\circ x_{(d+1)\sigma} \circ \cdots \circ  x_{n\sigma}\circ x_{d\sigma}$. We now define a permutation $\delta$ of $[1,n-1]$ by $t\delta=t\sigma$ for all $1\leq t\leq d-1$ and $t\delta=(t+1)\sigma$ for all $d\leq t\leq n-1.$ This gives $$x_{1\sigma}\circ \cdots\circ x_{(d-1)\sigma}\circ x_{(d+1)\sigma} \circ \cdots \circ  x_{n\sigma}=x_{1\delta}\circ \cdots\circ x_{(d-1)\delta}\circ x_{d\delta} \circ \cdots \circ  x_{(n-1)\delta}$$ as words.  Notice that for any $k>m$ and $k\delta<m\delta$ we have $(\supp(x_{k\delta}),\supp(x_{m\delta}))\in E$, and again by the inductive hypothesis we have
$x_1\circ \cdots \circ x_{n-1}$ shuffles to $x_{1\delta}\circ \cdots\circ x_{(d-1)\delta}\circ x_{d\delta} \circ \cdots \circ  x_{(n-1)\delta}$, and hence to $x_{1\sigma}\circ \cdots\circ x_{(d-1)\sigma}\circ x_{(d+1)\sigma} \circ \cdots \circ  x_{n\sigma}$. Therefore,
$x=x_1\circ \cdots \circ x_{n-1}\circ x_n$ shuffles to  $x_{1\sigma}\circ \cdots\circ x_{(d-1)\sigma}\circ x_{(d+1)\sigma} \circ \cdots \circ  x_{n\sigma}\circ x_{d\sigma}$, and hence to $y=x_{1\sigma}\circ \cdots\circ x_{(d-1)\sigma}\circ  x_{d\sigma} \circ x_{(d+1)\sigma} \circ \cdots \circ  x_{n\sigma}$.
\end{proof}

The following result is useful in later arguments.

\begin{lemma}\label{block}\cite[Lemma 3.14]{gould:2023}
Let $[x]=[y]$ where  $x=x_1\circ \cdots \circ x_n$ and $y=y_1 \circ \cdots \circ y_n$ are reduced and let $1\leq m\leq n$. Then $[x_1\circ \cdots \circ x_m]=[y_1\circ \cdots \circ y_m]$ if and only if $[x_{m+1}\circ \cdots \circ x_n]=[y_{m+1}\circ \cdots \circ y_n]$.
\end{lemma}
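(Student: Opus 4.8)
The plan is to reduce the statement to the shuffle combinatorics supplied by Theorem~\ref{shuffle} and Proposition~\ref{easy observation}. First I would set $u=x_1\circ\cdots\circ x_m$, $u'=x_{m+1}\circ\cdots\circ x_n$, and $v=y_1\circ\cdots\circ y_m$, $v'=y_{m+1}\circ\cdots\circ y_n$, so that $x=u\circ u'$ and $y=v\circ v'$. Since any factor of a reduced word is reduced (immediate from the characterisation of reduced words given just after Definition~\ref{defn:support}, as the required witnesses $k$ all lie inside the factor), each of $u,u',v,v'$ is reduced. As $[x]=[y]$ with $x,y$ reduced, Theorem~\ref{shuffle} gives that $x$ and $y$ are shuffle equivalent; I fix a shuffle realising this. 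By Remark~\ref{rem:allimportant}, for each vertex $\alpha$ this shuffle sends the $j$-th occurrence (left to right) of a letter with support $\alpha$ in $x$ to the $j$-th such occurrence in $y$; in particular $x$ and $y$ share the same multiset of letters and, for each $\alpha$, the same ordered sequence $L_\alpha$ of support-$\alpha$ letters.

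By left--right symmetry it is enough to prove $[u]=[v]\Rightarrow[u']=[v']$, the converse being identical with prefixes in place of suffixes. So I would assume $[u]=[v]$. Then $u,v$ are shuffle equivalent reduced words, so for each $\alpha$ their support-$\alpha$ sequences agree; these are precisely the initial segments of $L_\alpha$ cut off by position $m$ in $x$ and in $y$ respectively. Since the initial segments agree and the full sequences $L_\alpha$ agree, the complementary terminal segments agree, and these are exactly the support-$\alpha$ sequences of $u'$ and of $v'$. Consequently $u'$ and $v'$ have the same multiset of letters, and the assignment ``$k$-th support-$\alpha$ letter of $u'\mapsto k$-th support-$\alpha$ letter of $v'$'', taken over all $\alpha$, defines a bijection $\rho$ between the positions of $u'$ and those of $v'$.

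The crux is the observation that $\rho$ is simply the restriction to the suffixes of the global shuffle $x\to y$: both match $j$-th support-$\alpha$ occurrence to $j$-th, and the support-$\alpha$ letters of $u'$ (resp.\ $v'$) are exactly the occurrences of $L_\alpha$ sitting beyond position $m$ in $x$ (resp.\ $y$). I would then verify, using Proposition~\ref{easy observation} applied to $u'$, that $u'$ shuffles to $v'$ via $\rho$, i.e.\ that any two letter-occurrences appearing in reversed relative order in $u'$ versus $v'$ have commuting supports (an edge of $\Gamma$). Because $u'$ is a suffix of $x$ and $v'$ is a suffix of $y$, a pair of occurrences is reversed between $u'$ and $v'$ if and only if it is reversed between $x$ and $y$ under the global shuffle; and since $x$ shuffles to $y$, Proposition~\ref{easy observation} already guarantees that every such reversed pair has commuting supports. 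Hence the shuffle condition holds for $\rho$, so $u'$ shuffles to $v'$, and Theorem~\ref{shuffle} yields $[u']=[v']$.

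The main obstacle I anticipate is the tempting but incorrect shortcut of arguing solely from equality of the sequences $L_\alpha$: equality of all $L_\alpha$ is necessary but not sufficient for shuffle equivalence, since two non-commuting letters can be transposed without altering any $L_\alpha$ (for example $a\circ c$ versus $c\circ a$ with $a,c$ non-adjacent). The step that resolves this is precisely the identification of $\rho$ with the restriction of the already-valid global shuffle, so that every order reversal inside the factor is an order reversal in the whole word and therefore involves commuting supports; getting this bookkeeping exactly right is where the care is needed.
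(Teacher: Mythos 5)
Your proof is correct. One preliminary remark: the paper does not prove this lemma at all --- it is imported verbatim as \cite[Lemma 3.14]{gould:2023} --- so there is no internal proof to compare against; what you have produced is a self-contained derivation from the shuffle toolkit that the paper does develop (Theorem~\ref{shuffle}, Remark~\ref{rem:allimportant}, Proposition~\ref{easy observation}), which is exactly the right set of tools. The logical skeleton is sound: factors of reduced words are reduced; $[u]=[v]$ forces, for each vertex $\alpha$, the support-$\alpha$ letter sequences of $u$ and $v$ to coincide (Remark~\ref{rem:allimportant} applied to a shuffle $u\to v$), and in particular to have the same length $c_\alpha$; since the full sequences $L_\alpha$ of $x$ and $y$ also coincide, the global shuffle $x\to y$, which matches $j$-th $\alpha$-occurrence to $j$-th $\alpha$-occurrence, necessarily carries positions beyond $m$ in $x$ (occurrence indices $>c_\alpha$) to positions beyond $m$ in $y$, so it genuinely restricts to a letter-preserving bijection $\rho$ between the suffixes. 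You correctly identified the two places where care is needed and handled both: first, that equality of all the sequences $L_\alpha$ alone does not give shuffle equivalence (your $a\circ c$ versus $c\circ a$ example), so one must verify the edge condition of Proposition~\ref{easy observation}; second, that for two occurrences both lying in the suffixes, relative order in $u'$ (resp.\ $v'$) agrees with relative order in $x$ (resp.\ $y$), so every $\rho$-reversed pair is a reversed pair of the global shuffle and hence has adjacent supports by the necessity direction of Proposition~\ref{easy observation}, after which the sufficiency direction (applied to the reduced word $u'$) gives $u'\equiv v'$. The appeal to left--right symmetry for the converse is legitimate, since the whole argument dualises with prefixes in place of suffixes. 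The only point I would make explicit in a written version is the sentence asserting that $\rho$ is well defined: it is precisely the hypothesis $[u]=[v]$ that makes the occurrence-count split at position $m$ identical in $x$ and $y$, and without it the restriction claim fails --- but your text does contain this, albeit compressed into the clause about initial and terminal segments of $L_\alpha$.
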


\subsection{Reducing products}\label{sub:reducing products}

As the paper progresses, we will need to present increasingly detailed results about the way in which the product of two reduced words reduces. We begin here with some known preliminaries, the first remark following easily from an earlier comment.

\begin{Rem}\label{cor:concatenate} Let $x=x_1\circ\cdots \circ x_m$ and $ y=y_1\circ\cdots \circ y_n\in X^*$ be reduced. Then $x\circ y$ is {\em not}
 reduced  exactly if there exist
$i,j$ with $1\leq i\leq m,1\leq j\leq n$ such that
$\supp(x_i)=\supp(y_j)$ and for all $h,k$ with $i<h\leq m,1\leq k<j$
we have $(\supp(x_i),\supp(x_h))\in E$ and $(\supp(y_j),\supp(y_k))\in E$.
\end{Rem}

\begin{lemma}\cite[Lemma 3.10]{gould:2023} \label{product reduction}
Let $p\in X$, where $p\not \in I$,  and let $a=a_1\circ \cdots \circ a_n\in X^*$ be  reduced. Then one of the following occurs:
\begin{enumerate}

\item[(i)]  $p\circ a$ is  reduced;

\item[(ii)] there exists $1\leq k\leq n$ such that $\supp(a_k)=\supp(p)$ and $(\supp(p), \supp(a_l))\in E$ for $1\leq l\leq k-1$, and  $p\circ a$ reduces to
shuffle equivalent words
\[pa_{k}\circ a_1\circ \cdots \circ a_{k-1}\circ a_{k+1}\circ \cdots \circ a_n\mbox{ and }a_1\circ \cdots \circ a_{k-1}\circ pa_k \circ a_{k+1},\]
such that

\item[(a)] if  $pa_k\notin I$   then $pa_{k}\circ a_1\circ \cdots \circ a_{k-1}\circ a_{k+1}\circ \cdots \circ a_n$ is  reduced;

\item[(b)] if  $pa_k\in I$ then $p\circ a$ reduces in one further step to the reduced word
\[a_1\circ \cdots \circ a_{k-1} \circ a_{k+1}\circ \cdots \circ a_n.\]
\end{enumerate}
\end{lemma}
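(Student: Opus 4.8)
The plan is to characterise exactly when $p\circ a$ fails to be reduced and then to track the single reduction that can occur. First I would apply Remark~\ref{cor:concatenate} with $x=p$ (so $m=1$): since there are no letters of $x$ to the right of $p$, the remark says that $p\circ a$ is \emph{not} reduced precisely when there is some $j$ with $\supp(a_j)=\supp(p)$ and $(\supp(p),\supp(a_l))\in E$ for all $1\leq l<j$. If no such $j$ exists we are in case (i) and $p\circ a$ is reduced, so assume one does and let $k$ be the least such index. I would then observe that $a_k$ must be the \emph{first} occurrence of a letter with support $\supp(p)$ in $a$: if some $a_i$ with $i<k$ had $\supp(a_i)=\supp(p)$, then the defining condition at $j=k$ would force $(\supp(p),\supp(p))\in E$, which is impossible since $\Gamma$ is simple and so has no loops. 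In particular the condition $(\supp(p),\supp(a_l))\in E$ for $1\leq l\leq k-1$ required in (ii) holds for this $k$.

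Next I would produce the two displayed words. Writing $\alpha=\supp(p)=\supp(a_k)$, the commuting relations $(\alpha,\supp(a_l))\in E$ for $l<k$ let me shuffle $p$ rightwards past $a_1,\dots,a_{k-1}$, giving $a_1\circ\cdots\circ a_{k-1}\circ p\circ a_k\circ a_{k+1}\circ\cdots\circ a_n$; since $p,a_k\in M_\alpha$ the relation in $R_{\textsf{v}}$ merges them, and this is the single length-reducing step, yielding the second displayed word $a_1\circ\cdots\circ a_{k-1}\circ pa_k\circ a_{k+1}\circ\cdots\circ a_n$. As $\supp(pa_k)=\alpha$ still commutes with each of $a_1,\dots,a_{k-1}$, shuffling $pa_k$ back to the front produces the first displayed word, which is therefore shuffle equivalent to the second.

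The main work, and the step I expect to be the real obstacle, is showing in case (a) that $b:=pa_k\circ a_1\circ\cdots\circ a_{k-1}\circ a_{k+1}\circ\cdots\circ a_n$ is reduced when $pa_k\notin I$, using the combinatorial characterisation of reduced words recalled after Definition~\ref{defn:reducedform}. I would argue in two stages. First, deleting $a_k$ from the reduced word $a$ leaves $c:=a_1\circ\cdots\circ a_{k-1}\circ a_{k+1}\circ\cdots\circ a_n$ reduced: a new violation would require indices $i<k<j$ with $\supp(a_i)=\supp(a_j)$ for which $a_k$ was the only separating letter failing to commute with $\supp(a_i)$, but $i<k$ forces $(\supp(a_i),\alpha)\in E$, so $a_k$ was never such an obstruction and some surviving separator remains. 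Second, prepending $pa_k$ to $c$ keeps it reduced: since $a_k$ was the first support-$\alpha$ letter of $a$, no letter of support $\alpha$ precedes it, and for the first support-$\alpha$ letter $a_{j_0}$ occurring after position $k$ (if any) reducedness of $a$ supplies an index $k<t<j_0$ with $(\alpha,\supp(a_t))\notin E$; this $a_t$ survives in $c$ and lies between $pa_k$ and every support-$\alpha$ letter of $c$, so the reducedness condition is met for all the new pairs, while $pa_k\notin I$ and no letter of $c$ lies in $I$. Finally, for case (b), where $pa_k\in I$, I note that $pa_k=1_\alpha$ and a single application of a relation in $R_{\textsf{id}}$ deletes it from the second displayed word, leaving exactly $c$, which the argument above has already shown to be reduced.
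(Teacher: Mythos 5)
Your proposal is correct. Note that the paper does not actually prove this lemma: it imports it verbatim by citation from \cite[Lemma 3.10]{gould:2023}, so there is no in-paper argument to compare against step by step. Your reconstruction is the natural one and uses precisely the tools this paper makes available: Remark~\ref{cor:concatenate} specialised to a single letter $p$ to characterise non-reducedness and to locate the unique index $k$ (your observation that an earlier letter of support $\supp(p)$ would force a loop $(\supp(p),\supp(p))\in E$ is exactly right, and in fact shows $k$ is unique, not merely minimal), followed by the combinatorial criterion for reducedness stated after Definition~\ref{defn:reducedform}. You also correctly identified and handled the one genuinely delicate point: deleting a letter from a reduced word does \emph{not} in general preserve reducedness, but here it does, because any pair $i<k<j$ with $\supp(a_i)=\supp(a_j)$ cannot have had $a_k$ as its only separator, since $(\supp(p),\supp(a_i))\in E$ for all $i<k$ forces $a_k$ to commute with $\supp(a_i)$, so a surviving separator exists. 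The second stage (prepending $pa_k$ and reusing, for every later support-$\supp(p)$ letter, the separator guaranteed by reducedness of $a$ for the pair $(k,j_0)$) is likewise sound, as is the one-step application of $R_{\textsf{id}}$ in case (b). In short: a complete, self-contained proof of a statement the paper leaves to an external reference.
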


Lemma~\ref{product reduction} tells us that if $a$ is reduced and $p\in X\setminus I$, then either $p\circ a$ is reduced or we need to handle $p$ exactly once: by shuffling and  applying $R_{\textsf{v}}$, or by shuffling  and applying $R_{\textsf{v}}$ and $R_{\textsf{id}}$ in turn.

\subsection{Foata normal form}

\begin{defn}
A reduced word $w\in X^*$ is called a {\it complete block} if the subgraph of $\Gamma$ induced by $\supp(w)$ is complete.
\end{defn}

A word $w\in X^*$ is a complete block if and only if $\supp(w)$ is a complete subgraph, no letters in $w$ are from $I$ and for each $\alpha\in \supp(w)$ there exists a unique letter contained in $w$ with support $\alpha$.

\begin{defn}\cite{gould:2023}\label{defn:lcrf} Let $w\in X^*$. Then $w$ is a {\it left Foata normal form}
with {\em block length $k$} and {\em blocks} $w_i\in X^*$, $1\leq i\leq k$, if:

\begin{enumerate}
\item [(i)] $w=w_1\circ \cdots \circ w_k\in X^*$ is a reduced word;

\item [(ii)] $\supp(w_i)$ is a complete subgraph for all $1\leq i\leq k$;

\item [(iii)]  for any $1\leq i<k$ and  $\alpha\in \supp(w_{i+1})$, there is some $\beta\in \supp(w_i)$ such that $(\alpha, \beta)\not\in E$.

\end{enumerate}
If $[u]=[w]$ where $w$ is a left  Foata normal form, then 
$w$ is a  {\em  left  Foata normal form of $u$}. \end{defn}

The notion of {\it right Foata normal form} is defined in a dual way to that of left Foata normal form.

Given a reduced word $w\in X^*$ we may shuffle $w$ to $w_1\circ w'$ where $w_1$ is the longest possible complete
block. We apply the same shuffling process to $w'$ and continue until $w$ is a product of complete blocks. That this process yields the same blocks is the content of the next result.

\begin{Thm}\cite[Proposition 3.18, Theorem 3.20]{gould:2023}\label{thm:uniqueness} Every  element $w$ of $X^*$ has a left Foata normal form  $w_1\circ w_2\circ\dots\circ w_k$,
with blocks $w_i$ for $1\leq i\leq k$.

If  $w_1'\circ w_2'\circ\dots \circ w_h'$ is any left Foata normal form of $w$ with blocks $w_j'$ for $1\leq j\leq h$, then $k=h$ and $[w_i]=[w_i']$
for $1\leq i\leq k$.
\end{Thm}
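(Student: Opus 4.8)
\subsection*{Proof proposal}

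The plan is to treat existence and uniqueness separately, in each case inducting on the length $n$ of a reduced representative and using the cancellation result Lemma~\ref{block} to peel off one block at a time. For both halves the organising idea is the notion of a \emph{frontable} letter: calling $x_i$ frontable in a reduced word $x=x_1\circ\cdots\circ x_n$ when it can be shuffled to the very front, Proposition~\ref{easy observation} says this holds precisely when $(\supp(x_j),\supp(x_i))\in E$ for every $j<i$.

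For existence I would first invoke Theorem~\ref{shuffle} to replace $w$ by a reduced representative $x=x_1\circ\cdots\circ x_n$, and then define the first block $w_1$ to consist of exactly the frontable letters of $x$. Any two frontable letters have adjacent supports, which are moreover distinct (a frontable letter is a first occurrence of its support, and $\Gamma$ has no loops), so $\supp(w_1)$ induces a complete subgraph and $w_1$ is a complete block; shuffling these letters to the front yields $x\equiv w_1\circ w'$ with $w'$ reduced and, since $x_1$ is always frontable, strictly shorter. Applying the inductive hypothesis to $w'$ produces a left Foata normal form $w'\equiv w_2\circ\cdots\circ w_k$, and it remains only to verify condition~(iii) between $w_1$ and $w_2$. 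I would argue this by contradiction: the case $\alpha\in\supp(w_1)$ is immediate since $(\alpha,\alpha)\notin E$, so suppose some $\alpha\in\supp(w_2)$ with $\alpha\notin\supp(w_1)$ were adjacent to every vertex of $\supp(w_1)$. Then its first-occurrence letter, being frontable in $w'$ and commuting with all of $w_1$, would be frontable in the whole of $x$, contradicting its omission from $w_1$. Making precise the splitting ``everything before this letter in $x$ is either in $w_1$ or before it in $w'$'' uses Remark~\ref{rem:allimportant} together with Proposition~\ref{easy observation}.

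For uniqueness I would show that the leading block of \emph{any} left Foata normal form $w_1'\circ\cdots\circ w_h'$ of $w$ is forced. Every letter of $w_1'$ is a frontable first occurrence, because it sits in a complete block at the very front, so it commutes with the other letters of $w_1'$ and has nothing of its own support before it; hence $\supp(w_1')\subseteq\supp(w_1)$. Conversely, if a frontable support $\alpha$ lay in $\supp(w_j')$ with $j\ge 2$ minimal, then iterating condition~(iii) down from $w_j'$ to $w_1'$ would place before the first occurrence of $\alpha$ a letter of non-adjacent support, contradicting frontability; thus $\supp(w_1')=\supp(w_1)$. Since both $w_1$ and $w_1'$ are front blocks of reduced words shuffle-equivalent to $x$, Remark~\ref{rem:allimportant} forces the unique letter of each support to coincide, so $[w_1']=[w_1]$, and the two blocks have equal length. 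Applying Lemma~\ref{block} to reduced representatives of $w$, split after the first block, gives $[w_2'\circ\cdots\circ w_h']=[w_2\circ\cdots\circ w_k]$; as deleting the leading block of a left Foata normal form again satisfies (i)--(iii), I would finish by induction, concluding $h=k$ and $[w_i']=[w_i]$ for all $i$.

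The main obstacle I anticipate is the bookkeeping needed to make the frontability arguments rigorous, namely controlling, via Remark~\ref{rem:allimportant} and Proposition~\ref{easy observation}, how the relative positions of letters behave once $w_1$ is shuffled out, so that ``commutes with everything before it in $x$'' genuinely decomposes as ``commutes with $w_1$'' together with ``frontable in $w'$''. Establishing this decomposition cleanly, and the parallel descent argument iterating condition~(iii) in the uniqueness half, is the technical heart; once it is in place, completeness of the blocks, the verification of (iii), and the inductive cancellation through Lemma~\ref{block} become routine.
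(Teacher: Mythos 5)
Your proof is correct and takes essentially the route the paper intends: the paper itself supplies no proof of Theorem~\ref{thm:uniqueness} (it cites \cite{gould:2023}), but your greedy peeling of the maximal front complete block is precisely the construction sketched immediately before the theorem, your ``frontable letters'' characterisation is exactly the content of Corollary~\ref{cor:firstblock}, and the inductive cancellation through Lemma~\ref{block} is the expected descent. The one point you flag as delicate --- that frontability of a letter is invariant under shuffle equivalence, so that the first block of \emph{any} left Foata normal form is forced --- does go through as you outline, via Proposition~\ref{easy observation} applied to the shuffle bringing a given letter to the front together with the occurrence-matching of Remark~\ref{rem:allimportant}.
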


The next result is folklore, but follows easily from uniqueness in Theorem~\ref{thm:uniqueness}.

\begin{Cor}\label{cor:firstblock} Let $w=x_1\circ\cdots\circ x_n\in X^*$ have left Foata normal form  $w_1\circ w_2\circ\dots\circ w_k$,
with blocks $w_i$ for $1\leq i\leq k$. Then $w_1$ consists of all the letters $x_i$ such that $(\supp(x_i),\supp(x_j))\in E$ for all $1\leq j<i$.
\end{Cor}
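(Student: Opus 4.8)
The plan is to pin down the first block by a double inclusion, working with the individual \emph{occurrences} of the letters rather than their values in $X$. Throughout I take $w$ to be reduced: this is the only setting in which the statement can hold, since a left Foata normal form is a reduced word (condition (i) of Definition~\ref{defn:lcrf}) and we are comparing its letters with the $x_i$, and for a non-reduced $w$ the letters of $w_1$ genuinely need not occur among the $x_i$. With $w$ reduced, $w$ and its left Foata normal form $w_1\circ\cdots\circ w_k$ are both reduced representatives of the same element, hence shuffle equivalent by Theorem~\ref{shuffle}; I fix a sequence of shuffles carrying $w$ to $w_1\circ\cdots\circ w_k$. The single fact I use repeatedly is that a shuffle only ever transposes two \emph{adjacent} letters whose supports form an edge, so two occurrences whose supports are not joined by an edge — in particular any two occurrences with equal support, as $\Gamma$ has no loops — never reverse their relative order under a shuffle (this is the mechanism behind Remark~\ref{rem:allimportant}). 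Writing $S=\{i:(\supp(x_i),\supp(x_j))\in E\text{ for all }1\le j<i\}$ for the indices of the ``leading'' letters, the goal is to show that the occurrences appearing in $w_1$ are exactly the $x_i$ with $i\in S$.

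For the inclusion that every letter of $w_1$ is leading, I would take an occurrence $x_i$ lying in $w_1$ and an arbitrary $j<i$. If $(\supp(x_i),\supp(x_j))\notin E$, then by the order-preservation fact $x_j$ still precedes $x_i$ in $w_1\circ\cdots\circ w_k$; since $w_1$ is the initial block and $x_i$ lies in it, the occurrence $x_j$ must also lie in the prefix $w_1$. But the two distinct occurrences $x_i,x_j$ both in the complete block $w_1$ must have distinct and adjacent supports, which contradicts $(\supp(x_i),\supp(x_j))\notin E$ (covering also the case $\supp(x_i)=\supp(x_j)$). Hence $(\supp(x_i),\supp(x_j))\in E$ for every $j<i$, i.e.\ $i\in S$.

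For the reverse inclusion I would argue by contradiction: suppose $i\in S$ yet the occurrence $x_i$ lies in a block $w_r$ with $r\ge 2$. Applying condition (iii) of Definition~\ref{defn:lcrf} to the consecutive blocks $w_{r-1},w_r$ produces some $\beta\in\supp(w_{r-1})$ with $(\supp(x_i),\beta)\notin E$; let $x_{i'}$ be an occurrence in $w_{r-1}$ with $\supp(x_{i'})=\beta$. Then $x_{i'}$ precedes $x_i$ in $w_1\circ\cdots\circ w_k$ and their supports are non-adjacent, so by order-preservation $x_{i'}$ already precedes $x_i$ in $w$, that is $i'<i$; as $(\supp(x_i),\supp(x_{i'}))\notin E$ this contradicts $i\in S$. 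Therefore $x_i$ lies in $w_1$, and combining the two inclusions identifies the letters of $w_1$ with $\{x_i:i\in S\}$.

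The two contradiction arguments are short and routine; the step I expect to carry the real content is the reverse inclusion, where the purely structural separation condition (iii) on the Foata blocks has to be converted into a statement about the \emph{original} positions of the letters in $w$ — and this is precisely where the order-preservation property for non-adjacent occurrences does the essential work. The one point to be careful about from the start is the reduced-ness hypothesis, and I would also note that the well-definedness of ``the block $w_1$'' that makes the statement unambiguous is guaranteed by the uniqueness half of Theorem~\ref{thm:uniqueness}.
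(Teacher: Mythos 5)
Your proof is correct. The paper itself gives no written argument: it records the corollary as folklore that ``follows easily from uniqueness in Theorem~\ref{thm:uniqueness}'', so the intended route is constructive --- take the set $S$ of leading indices, check that the corresponding letters shuffle to the front of $w$ and form a complete block, that the remainder satisfies condition (iii) of Definition~\ref{defn:lcrf} by maximality, and then invoke uniqueness of the left Foata normal form to identify this block with $w_1$. You take a genuinely different route: you fix an arbitrary left Foata normal form of $w$ and analyse the shuffle to it directly, powered by the single invariance principle that two occurrences whose supports are non-adjacent (in particular equal, since $\Gamma$ is loop-free) never exchange relative order under shuffles --- the mechanism behind Remark~\ref{rem:allimportant}. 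Both inclusions then fall out cleanly: a letter of $w_1$ preceded in $w$ by a non-adjacent occurrence would drag that occurrence into the complete block $w_1$, which cannot accommodate it (each support occurs once and distinct supports in $\supp(w_1)$ are adjacent); and a leading letter stranded in $w_r$ with $r\geq 2$ collides with condition (iii) via an earlier non-adjacent occurrence in $w_{r-1}$, whose earlier position in $w$ is again forced by order-preservation. What your route buys is that you never have to construct a normal form or verify conditions (i)--(iii) for it --- uniqueness is needed only to make ``the block $w_1$'' well defined as a multiset of letters, as you correctly note at the end --- at the cost of being longer than the appeal to Theorem~\ref{thm:uniqueness} the paper has in mind, which leans entirely on the already-proved uniqueness theorem. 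Your opening caveat that the statement only makes sense for reduced $w$ is also apt: the paper's surrounding discussion tacitly assumes this, since otherwise the letters of $w_1$ need not occur among the $x_i$ at all.
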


It can be useful to refer to the letters in $w_1$ above as `the letters that shuffle to the front of $w$', particularly if we do not require
the full labelling of left Foata normal form.

\section{The ascending chain condition on principal left ideals}\label{sec:AAPL}

Recall that a monoid $S$ satisfies {\it ascending chain condition on principal left ideals}, abbreviated by ACCPL,  if  every chain  $$Sa_1 \subseteq Sa_2\subseteq\cdots$$ 
stabilises, that is, there is an $n\geq 1$ such that 
$Sa_n=Sa_{n+k}$ for all $k\geq 1$. Stopar \cite{stopar:2012}  showed that the class of monoids with ACCPL is closed under finite direct products.  Miller reproved this and also showed the corresponding result for free products \cite{miller:2023}. The aim of this section is to prove that a  graph product $\mathscr{GP}$ has ACCPL if and only if  each vertex monoid has ACCPL.

\begin{lemma}\label{prop:retract} Let $T$ be a retract of a monoid $S$. Then for any $a,b\in T$ we have
\[Ta\subseteq Tb\mbox{ if and only if }Sa\subseteq Sb.\]\end{lemma}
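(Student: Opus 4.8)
The plan is to prove both implications directly, exploiting the structure of a retract. Recall that $T$ being a retract of $S$ means there is an embedding $\iota\colon T\to S$ (which we may treat as an inclusion $T\subseteq S$) together with a monoid homomorphism (retraction) $\rho\colon S\to T$ satisfying $\rho\iota=\id_T$, that is, $\rho(t)=t$ for all $t\in T$. I would first record this data explicitly, since the retraction $\rho$ is the key tool for the harder direction.

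The forward implication $Ta\subseteq Tb\implies Sa\subseteq Sb$ is the easy one and I would dispatch it first. Suppose $Ta\subseteq Tb$; then in particular $a\in Ta\subseteq Tb$, so $a=tb$ for some $t\in T$. Since $T\subseteq S$ we have $t\in S$, whence $a\in Sb$, and therefore $Sa\subseteq Sb$. This requires nothing beyond $T\subseteq S$ and closure of the ideal under left multiplication.

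The reverse implication $Sa\subseteq Sb\implies Ta\subseteq Tb$ is where the retraction does the work. Suppose $Sa\subseteq Sb$. Then $a\in Sb$, so $a=sb$ for some $s\in S$. Now apply the homomorphism $\rho$: since $a,b\in T$ we have $\rho(a)=a$ and $\rho(b)=b$, so
\[
a=\rho(a)=\rho(sb)=\rho(s)\rho(b)=\rho(s)\,b.
\]
Because $\rho(s)\in T$, this exhibits $a\in Tb$, and hence $Ta\subseteq Tb$. The main (and really only) subtlety is that the argument uses both that $\rho$ is a homomorphism and that $\rho$ fixes the elements of $T$ pointwise; I would make sure the definition of retract in force gives exactly these properties. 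No obstacle of real substance arises, but it is worth flagging that the statement is genuinely one-sided in the sense that it concerns containment of \emph{principal} left ideals generated by elements of $T$, and the retraction is precisely what lets us pull an arbitrary witness $s\in S$ back into $T$ without changing the product against $b$.
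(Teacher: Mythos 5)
Your proof is correct and follows essentially the same route as the paper: both directions rest on the observation that $Ma\subseteq Mb$ if and only if $a=sb$ for some $s\in M$, with the easy direction using only $T\subseteq S$ and the harder direction obtained by applying the retraction to the witness $s$ to pull it back into $T$.
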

\begin{proof} Let $S$ be a monoid with retract $T$, that is, $T$ is a submonoid of $S$ and there is an onto morphism $\theta:S\rightarrow T$ that
restricts to the identity on $T$. For any monoid $M$ and $u,v\in M$ we have  $Ma\subseteq Mb$ if and only if $a=sb$ for some $s\in M$.
Clearly then if $a,b\in T$ and $Ta\subseteq Tb$, then $Sa\subseteq Sb$. On the other hand, if $Sa\subseteq Sb$, then choosing $t\in S$ with
$a=tb$ and applying the retraction $\theta$, we have $a=(t\theta) b$, so that $Ta \subseteq Tb$. 
\end{proof}

\begin{corollary}\label{prop:retract} The class of monoids satisfying ACCPL is closed under retract.
\end{corollary}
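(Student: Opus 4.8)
The plan is to deduce this corollary directly from the preceding Lemma~\ref{prop:retract}, which establishes that containment of principal left ideals is faithfully detected by the retraction. The strategy is entirely transparent: I would take an arbitrary monoid $S$ satisfying ACCPL, let $T$ be a retract of $S$, and verify that any ascending chain of principal left ideals in $T$ must stabilise. The key observation is that the equivalence in the lemma lets me transport the chain up to $S$, use ACCPL there, and pull the conclusion back down to $T$.

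Concretely, I would start with a chain $Ta_1\subseteq Ta_2\subseteq\cdots$ of principal left ideals in $T$, where each $a_i\in T$. By the lemma, since $a_i\in T$ for all $i$, the containment $Ta_i\subseteq Ta_{i+1}$ holds if and only if $Sa_i\subseteq Sa_{i+1}$, so the original chain gives rise to an ascending chain $Sa_1\subseteq Sa_2\subseteq\cdots$ of principal left ideals in $S$. Because $S$ satisfies ACCPL, this chain stabilises: there is some $n$ with $Sa_n=Sa_{n+k}$ for all $k\geq 1$. Applying the lemma once more in the other direction (again using that all the $a_i$ lie in $T$), each equality $Sa_n=Sa_{n+k}$ yields $Ta_n=Ta_{n+k}$, so the chain in $T$ stabilises at the same index $n$. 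This shows $T$ satisfies ACCPL.

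There is no genuine obstacle here; the entire content has been front-loaded into Lemma~\ref{prop:retract}, and this corollary is a routine application of it. The only point that requires a moment's care is that the lemma is stated as a biconditional for elements of $T$, so I must be sure to note that each $a_i$ belongs to $T$ before invoking the equivalence in either direction. I would also remark, for completeness, that to translate the equality $Sa_n=Sa_{n+k}$ into equality in $T$ I apply the lemma to both inclusions $Sa_n\subseteq Sa_{n+k}$ and $Sa_{n+k}\subseteq Sa_n$ separately, each of which is legitimate since the relevant elements lie in $T$.
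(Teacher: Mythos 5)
Your proposal is correct and is exactly the argument the paper intends: the corollary is stated without proof as an immediate consequence of Lemma~\ref{prop:retract}, and your routine transport of the chain from $T$ to $S$ and back (applying the biconditional to both inclusions to get equality) is the standard verification. Your care in noting that each $a_i\in T$ before invoking the lemma is appropriate but unremarkable; there is nothing further to add.
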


We have observed in Remark~\ref{rem:freeprod} that each vertex monoid $M_\alpha$ naturally embeds into the graph product $\mathscr{GP}$, and moreover
there is a retraction from  $\mathscr{GP}$ to $M_{\alpha}$. These considerations immediately yield the following.

\begin{corollary}\label{cor:oneway}  If $\mathscr{GP}$ has ACCPL, then so does each vertex  monoid $M_\alpha$.
\end{corollary}

We aim to establish the converse to Corollary~\ref{cor:oneway}. To do so,  we  characterise the inclusion relation between principal left ideals  in $\mathscr{GP}$. We begin by investigating the inclusion $\mathscr{GP}[u] \subseteq \mathscr{GP}[v]$ for $u, v \in X^*$ under the additional constraint that $v$ contains no left invertible letters that can be shuffled to the front, or equivalently, the first block of a left Foata normal form of $u$ contains no left invertible letters.

\begin{Lem}\label{standard multi}
Suppose $u=u_1\circ \cdots \circ u_m\in X^*$ is a left Foata normal form with block length $m$ and blocks $u_i$, $1\leq i\leq m$,  such that $u_1$ contains no left invertible letters, and let $a\in X^*$ be a reduced word. Let $z$ be a reduced form of $a\circ u_1$. Then $z\circ u_2\circ \cdots \circ u_m$ is a reduced  form of $a\circ u$. Further, $|z|\geq |u_1|$ (and hence $|z\circ u_2\circ \cdots \circ u_m|\geq |u|$), and $|z|=|u_1|$ (and hence $|z\circ u_2\circ \cdots \circ u_m|=|u|$) if and only if $\supp(a)\subseteq \supp(u_1)$.
\end{Lem}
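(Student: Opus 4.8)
The plan is to prove the three assertions — that $W:=z\circ u_2\circ\cdots\circ u_m$ is a reduced form of $a\circ u$, the bound $|z|\ge|u_1|$, and the equality criterion — by first pinning down $\supp(z)$ and then showing that, when reducing $a\circ u$, no reduction ever penetrates beyond the first block $u_1$. Since any two reduced forms of $a\circ u_1$ are shuffle equivalent (Theorem~\ref{shuffle}), shuffling inside the $z$-part shows it suffices to prove everything for one convenient reduced form $z$, and the statement $[W]=[a\circ u]$ is then immediate from $[z]=[a\circ u_1]$. I would build that convenient form explicitly, by prepending the letters of $a$ to $u$ one at a time from right to left, controlling each single step with Lemma~\ref{product reduction}.

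The first key step is a support lemma: \emph{if $w\in X^*$ is reduced and no letter of $u_1$ is left invertible, then a reduced form of $w\circ u_1$ has support exactly $\supp(w)\cup\supp(u_1)$}. I would prove this by appending the letters of $u_1$ to $w$ one at a time and invoking the right–left dual of Lemma~\ref{product reduction} (the paper works throughout up to left–right duality). Each appended letter $p$ of $u_1$ is the right operand of any merge $q\circ p\mapsto qp$ that it triggers, and $qp=1_{\supp(p)}$ would force $p$ to be left invertible, which is excluded; hence no letter is ever deleted, supports only grow, and one obtains $\supp(w)\cup\supp(u_1)\subseteq\supp(\text{red.\ form})\subseteq\supp(w\circ u_1)$. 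Applying this with $w=a$ gives $\supp(z)=\supp(a)\cup\supp(u_1)$, and in particular $\supp(u_1)\subseteq\supp(z)$.

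The main step, and the point where I expect the real difficulty, is showing that the reduction stays inside $u_1$. I would argue by induction on $|a|$: writing $a=a_1\circ\cdots\circ a_n$ and $a^{(i)}=a_{n-i+1}\circ\cdots\circ a_n$, I claim the reduced form of $a^{(i)}\circ u$ is $z^{(i)}\circ u_2\circ\cdots\circ u_m$, where $z^{(i)}$ is a reduced form of $a^{(i)}\circ u_1$. Passing from $i$ to $i+1$ means prepending the single letter $p=a_{n-i}$, of support $\delta$ say, to the reduced word $z^{(i)}\circ u_2\circ\cdots\circ u_m$ and applying Lemma~\ref{product reduction}. The only way the result could fail to have the required shape is if the first reachable letter of support $\delta$ lay in $u_2\circ\cdots\circ u_m$; for $p$ to reach it, every letter of $z^{(i)}$ would have to shuffle with $\delta$, and such a reachable letter forces $\delta\in\supp(u_2)$. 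But then Foata condition~(iii) supplies some $\beta\in\supp(u_1)$ with $(\delta,\beta)\notin E$, and by the support lemma $\beta\in\supp(u_1)\subseteq\supp(z^{(i)})$, so $z^{(i)}$ contains a letter that does \emph{not} shuffle with $\delta$ — a contradiction. Hence the reachable $\delta$-letter (if any) lies in $z^{(i)}$, so Lemma~\ref{product reduction} confines the whole reduction, together with all the reshuffling it prescribes, to the $z$-block, leaving $u_2\circ\cdots\circ u_m$ untouched; the resulting word is reduced by Lemma~\ref{product reduction} and is of the form $z^{(i+1)}\circ u_2\circ\cdots\circ u_m$ with $z^{(i+1)}$ a reduced form of $a^{(i+1)}\circ u_1$. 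At $i=n$ this is exactly the assertion that $W$ is a reduced form of $a\circ u$.

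Finally I would read off the length statements from $\supp(z)=\supp(a)\cup\supp(u_1)$. Since $u_1$ is a complete block it has exactly one letter per support, so $|u_1|=|\supp(u_1)|$; since $z$ is reduced we have $|z|\ge|\supp(z)|$, whence $|z|\ge|\supp(z)|\ge|\supp(u_1)|=|u_1|$. Equality $|z|=|u_1|$ forces $|z|=|\supp(z)|$ and $\supp(z)=\supp(u_1)$, and the latter is equivalent, via $\supp(z)=\supp(a)\cup\supp(u_1)$, to $\supp(a)\subseteq\supp(u_1)$. Conversely, if $\supp(a)\subseteq\supp(u_1)$ then $\supp(z)=\supp(u_1)$ induces a complete subgraph, so a reduced word with this support cannot contain two letters of the same support (every letter strictly between them would shuffle with that support, contradicting the reducedness criterion); thus $z$ is a complete block and $|z|=|\supp(z)|=|u_1|$. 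The bracketed consequences for $|z\circ u_2\circ\cdots\circ u_m|$ are then immediate once $W$ is known to be reduced, since then its length is $|z|+|u_2|+\cdots+|u_m|$ while $|u|=|u_1|+\cdots+|u_m|$.
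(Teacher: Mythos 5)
Your proposal is correct, but it is genuinely more self-contained than the paper's own proof. The paper disposes of the main assertion --- that $z\circ u_2\circ\cdots\circ u_m$ is a reduced form of $a\circ u$ --- in a single line, by citing Lemma 4.4 of \cite{gould:2023}, and then settles the length claims with two brief remarks: since $u_1$ has no left invertible letters, $\supp(u_1)\subseteq\supp(z)$, whence $|z|\geq|u_1|$, and the equality criterion is declared ``clear from repeated applications of Lemma~\ref{product reduction}''. You instead reprove the outsourced step from first principles: prepending the letters of $a$ one at a time, controlling each step with Lemma~\ref{product reduction}, and using condition (iii) of Definition~\ref{defn:lcrf} together with your support identity $\supp(z)=\supp(a)\cup\supp(u_1)$ to show that the merge position can never lie beyond the first block. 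Your support lemma is in fact slightly sharper than what the paper records (equality of supports rather than one containment), and it makes the ``if and only if'' direction crisp where the paper merely gestures. What the paper's route buys is brevity; what yours buys is an argument verifiable within the present paper, and it is almost certainly the argument underlying the cited external lemma.

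Two steps deserve a word of patching, though neither is a real gap. First, your claim that a reachable letter of support $\delta$ beyond $z^{(i)}$ ``forces $\delta\in\supp(u_2)$'' silently skips the case where that letter lies in $u_j$ with $j\geq 3$; this case is excluded by the same device one block back: every letter of $u_{j-1}$ precedes the merge position, yet condition (iii) supplies $\beta\in\supp(u_{j-1})$ with $(\delta,\beta)\notin E$ (and if $\beta=\delta$, the earlier $\delta$-letter itself violates the adjacency requirement of Lemma~\ref{product reduction}(ii), since the graph has no loops). Second, you implicitly identify the reduction of $p\circ\bigl(z^{(i)}\circ u_2\circ\cdots\circ u_m\bigr)$ with that of $p\circ z^{(i)}$; this is legitimate because the index $k$ in Lemma~\ref{product reduction}(ii) is unique --- two positions of equal support both satisfying the adjacency condition would contradict reducedness --- so once the merge letter is shown to lie in $z^{(i)}$, cases (a) and (b) of that lemma applied to $p\circ z^{(i)}$ give that your $z^{(i+1)}$ is reduced (alternatively, it is a prefix of the reduced output for the long word). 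Both repairs are routine and your overall structure, including the reduction to a single convenient reduced form $z$ via Theorem~\ref{shuffle} and the complete-block count $|u_1|=|\supp(u_1)|$ for the length statements, is sound.
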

\begin{proof}
It follows from \cite[Lemma 4.4]{gould:2023} that $z\circ u_2\circ \cdots \circ u_m$ is a reduced form of $a\circ u$. Since  $u_1$ contains no left invertible letters, the support of the reduced form $z$ of $a\circ u_1$ must contain $\supp(u_1)$, implying $|z|\geq |u_1|$.
It is clear from repeated applications of Lemma~\ref
{product reduction} that $\supp(a)\subseteq \supp(u_1)$ if and only if $|z|=|u_1|$. 
\end{proof}

\begin{lemma}\label{key7}\label{key4}
Let $a, b\in X\setminus I$ be such that $b$ is not left invertible. Then $\mathscr{GP} [a]\subseteq \mathscr{GP}[b]$ if and only if  $\supp(a)=\supp(b)$ and $M_{\mathbf{s}(a)} a\subseteq M_{\mathbf{s}(b)} b$. 

Consequently, if also $a$ is not left invertible, then  $\mathscr{GP} [a]= \mathscr{GP}[b]$ if and only if  $\supp(a)=\supp(b)$ and $M_{\mathbf{s}(a)} a=M_{\mathbf{s}(b)} b$.
\end{lemma}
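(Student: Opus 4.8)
The plan is to read off the inclusion $\mathscr{GP}[a]\subseteq\mathscr{GP}[b]$ directly from the reduced-form machinery, using crucially that $a,b\in X\setminus I$ are single letters. Write $\alpha=\mathbf{s}(a)$ and $\beta=\mathbf{s}(b)$, so that $a\in M_\alpha\setminus\{1_\alpha\}$ and $b\in M_\beta\setminus\{1_\beta\}$; in particular $a$ and $b$ are themselves reduced words of length $1$, and $\supp(a),\supp(b)$ are singletons. Sufficiency is immediate and I would dispose of it first: if $\supp(a)=\supp(b)$, so $\alpha=\beta$, and $M_\beta a\subseteq M_\beta b$, then $a\in M_\beta a\subseteq M_\beta b$ yields $a=cb$ for some $c\in M_\beta$; applying $R_{\textsf{v}}$ gives $[a]=[c][b]$, whence $[a]\in\mathscr{GP}[b]$ and therefore $\mathscr{GP}[a]\subseteq\mathscr{GP}[b]$.

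The substance is necessity. Suppose $\mathscr{GP}[a]\subseteq\mathscr{GP}[b]$, so $[a]=[w][b]$ for some $w\in X^*$; by Theorem~\ref{shuffle} I may replace $w$ by a reduced form and assume $w=w_1\circ\cdots\circ w_n$ is reduced. Since $[a]$ is represented by the length-$1$ reduced word $a$, every reduced form of $w\circ b$ has length $1$. I would now apply the left--right dual of Lemma~\ref{product reduction} to the product of the reduced word $w$ with the single letter $b$: either (i) $w\circ b$ is reduced, of length $n+1$; or (ii) some letter $w_k$ with $\supp(w_k)=\beta$ absorbs $b$ from the right, and either (a) $w_kb\notin I$, giving a reduced form of length $n$, or (b) $w_kb\in I$, giving a reduced form of length $n-1$. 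Matching against length $1$: case (i) forces $n=0$, so $w=\epsilon$ and $a=b$; case (ii)(a) forces $n=1$, so $w=w_1\in M_\beta$ and $a=w_1b$. This is exactly where the hypothesis that $b$ is not left invertible does the work: case (ii)(b) would require $w_kb=1_\beta$, i.e.\ the letter $w_k\in M_\beta$ would be a left inverse of $b$ (left invertibility of a single letter in $\mathscr{GP}$ being the same as left invertibility in its vertex monoid), contradicting the hypothesis, so it cannot occur. In the surviving cases $a=cb$ with $c\in M_\beta$ (take $c=1_\beta$ or $c=w_1$), so $\alpha=\supp(a)=\beta=\supp(b)$ and $a\in M_\beta b$, giving $M_{\mathbf{s}(a)}a\subseteq M_{\mathbf{s}(b)}b$.

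The main obstacle is simply organizing the reduction of $w\circ b$ correctly and observing that appending a single non-identity letter to a reduced word changes its length by at most one. Once this is framed through the dual of Lemma~\ref{product reduction}, the length-$1$ constraint collapses the analysis to $n\in\{0,1\}$, with the left-invertibility hypothesis precisely excluding the only remaining configuration (a full cancellation $w_kb=1_\beta$). The ``consequently'' clause then follows formally: since $a$ is also not left invertible, the first statement applies with the roles of $a$ and $b$ interchanged, so $\mathscr{GP}[a]=\mathscr{GP}[b]$ holds iff both inclusions hold, iff $\supp(a)=\supp(b)$ together with $M_{\mathbf{s}(a)}a\subseteq M_{\mathbf{s}(b)}b$ and $M_{\mathbf{s}(b)}b\subseteq M_{\mathbf{s}(a)}a$, that is, iff $\supp(a)=\supp(b)$ and $M_{\mathbf{s}(a)}a=M_{\mathbf{s}(b)}b$.
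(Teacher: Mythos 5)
Your proposal is correct and takes essentially the paper's route: where the paper invokes Lemma~\ref{standard multi} (whose hypothesis that the first block has no left invertible letters is exactly where ``$b$ not left invertible'' enters) to force the multiplier $c$ to satisfy $\supp(c)\subseteq \supp(b)$ and hence $a=cb$ in the vertex monoid, you inline the same length-and-support bookkeeping for the length-$1$ case directly via the left--right dual of Lemma~\ref{product reduction}, with the non-left-invertibility of $b$ excluding the full-cancellation case. The sufficiency direction and the ``consequently'' clause are handled exactly as in the paper.
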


\begin{proof}
Assume $\mathscr{GP}[a] \subseteq \mathscr{GP}[b]$. Then there exists a reduced word $c \in X^*$ such that $[a] = [c][b]$.  By Lemma \ref{standard multi}, we have $\supp(c) = \supp(b)$, and consequently, $[a] = [cb]$. Applying Remark \ref{rem:freeprod}, we deduce that $a = cb$ holds in $M_{\mathbf{s}(a)}$, and thus 
$
M_{\mathbf{s}(a)} a \subseteq M_{\mathbf{s}(b)} b.$ 

The proof of the converse is direct.
\end{proof}

\begin{Lem}\label{key8}
Let $a=a_1\circ \cdots \circ a_n\in X^*$ be a complete block containing no left  invertible letters. Then $$\mathscr{GP}[a]=\bigcap_{1\leq i\leq n}\mathscr{GP}[a_i].$$
\end{Lem}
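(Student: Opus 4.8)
The plan is to prove the two inclusions $\mathscr{GP}[a] \subseteq \bigcap_i \mathscr{GP}[a_i]$ and $\bigcap_i \mathscr{GP}[a_i] \subseteq \mathscr{GP}[a]$ separately; the first is routine and the second carries all the content. For the forward inclusion, recall that since $a$ is a complete block its supports $\supp(a_i)$ are distinct and pairwise adjacent, so by Proposition~\ref{easy observation} the letters $a_1, \dots, a_n$ commute freely. Hence for each $i$ we may shuffle $a$ so that $a_i$ occupies the final position, obtaining $[a] = [d_i][a_i]$ with $d_i = a_1 \circ \dots \circ a_{i-1} \circ a_{i+1} \circ \dots \circ a_n$. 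Thus $\mathscr{GP}[a] \subseteq \mathscr{GP}[a_i]$ for every $i$, and intersecting over $i$ gives the inclusion.

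The reverse inclusion rests on a single-letter divisibility criterion that I would prove first: if $b \in X \setminus I$ is not left invertible, with $\supp(b) = \beta$, and $[w] \in \mathscr{GP}[b]$ for a reduced word $w$, then $\beta \in \supp(w)$, the last letter of support $\beta$ occurring in $w$ can be shuffled to the very end of $w$, and that last $\beta$-letter lies in $M_\beta\, b$. To see this, write $[w] = [c][b] = [c \circ b]$ for a reduced word $c$ and reduce $c \circ b$ by the left--right dual of Lemma~\ref{product reduction}. Either $c \circ b$ is already reduced, in which case $b$ is itself the last $\beta$-letter and $b = 1_\beta\, b \in M_\beta\, b$; or the last $\beta$-letter $c_k$ of $c$ absorbs $b$ to produce $c_k b$ at the end of a reduced form. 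The degenerate possibility $c_k b \in I$ is precisely the statement that $b$ is left invertible, which is excluded by hypothesis; otherwise $c_k b \in M_\beta\, b$ is the last $\beta$-letter and sits at the end. Remark~\ref{rem:allimportant} ensures that ``the last $\beta$-letter of $w$'' is well defined independently of the reduced form chosen.

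Now I would take $[w] \in \bigcap_i \mathscr{GP}[a_i]$ with $w$ reduced and apply the criterion to each $a_i$: every $\alpha_i := \supp(a_i)$ lies in $\supp(w)$, the last $\alpha_i$-letter $\ell_i$ shuffles to the end of $w$, and $\ell_i = x_i a_i$ for some $x_i \in M_{\alpha_i}$. The crux is to shuffle all $n$ of these letters to the back at once. Listing $\ell_1, \dots, \ell_n$ in order of their positions in $w$ and moving them to the end one at a time, starting from the rightmost, I would check that each $\ell_i$ only has to pass letters originally lying to its right, all of which are adjacent to $\alpha_i$ because $\ell_i$ shuffles to the back on its own; the previously collected $\ell_j$ are also adjacent to $\alpha_i$ since the $\alpha_j$ are pairwise adjacent. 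This yields a reduced form $w \equiv w'' \circ \ell_{(1)} \circ \dots \circ \ell_{(n)}$ whose terminal block is a complete block on $\{\alpha_1, \dots, \alpha_n\}$. Reordering that block by index and using $R_{\textsf{v}}$ together with the free commutation of distinct adjacent supports, I would rewrite $[\ell_{(1)} \circ \dots \circ \ell_{(n)}] = [(x_1 a_1) \circ \dots \circ (x_n a_n)] = [x][a]$ with $x = x_1 \circ \dots \circ x_n$, so that $[w] = [w''][x][a] \in \mathscr{GP}[a]$.

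The main obstacle is the simultaneous-shuffling step in the reverse inclusion: each $\ell_i$ can be moved to the end in isolation, but one must verify that gathering them all at the end is consistent, i.e. that no required transposition is blocked. This is exactly where completeness of the block (pairwise adjacency of the $\alpha_i$) together with each $\ell_i$ being the final letter of its own support are both needed. A secondary subtlety is the single-letter criterion, in which non-left-invertibility of the $a_i$ is precisely what prevents an $a_i$ from cancelling and its support from vanishing from $w$.
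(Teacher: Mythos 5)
Your proof is correct and takes essentially the same route as the paper's: both arguments reduce each product $w_i\circ a_i$ via the right--left dual of Lemma~\ref{product reduction}, use the absence of left invertible letters in $a$ to rule out a deletion (so each $a_i$ survives as a letter $x_ia_i$ with $x_i\in M_{\supp(a_i)}$), and then gather these $n$ letters at the end of $z$ to factor out $[a]$. The only cosmetic difference is that the paper packages the gathering step as membership in the last block of a right Foata normal form of $z$, whereas you carry out the simultaneous shuffle by hand using Remark~\ref{rem:allimportant} and Proposition~\ref{easy observation}.
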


\begin{proof}
Since $a=a_1\circ \cdots \circ a_n$ is complete, we can easily deduce $\mathscr{GP}[a]\subseteq \mathscr{GP}[a_i]$ for each $1\leq i\leq n$, and thus $$\mathscr{GP}[a]\subseteq \bigcap_{1\leq i\leq n}\mathscr{GP}[a_i].$$

Conversely, let $[z]\in \bigcap_{1\leq i\leq n}\mathscr{GP}[a_i]$. Then there exist reduced words $w_1, \cdots, w_n\in X^*$ such that $$[z]=[w_1][a_1]=\cdots=[w_n][a_n].$$ For each $1\leq i\leq n$, let $w_i=x^i_1\circ \cdots \circ x^i_{m_i}$. Then, by Lemma \ref{product reduction}, either $x^i_1\circ \cdots \circ x^i_{m_i}\circ  a_i$ is reduced or it reduces to $$x^i_1\circ \cdots \circ x^i_{j-1}\circ x^i_{j+1} \circ \cdots \circ x^i_{m_i}\circ x_j^ia_i$$ for some $1\leq j\leq m_i$. This implies either $a_i$ or $x^i_{j}a_i$ is in the last block of a right Foata normal form of $w_i\circ a_i$, and hence of $z$. Therefore, 

\[[z]=[u][p][a_1\circ \cdots\circ a_n]\]
where $p$ is the product (in any order) of the $x^i_{j}$ (where they exist). This gives that $$[z]\in \mathscr{GP}[a_1\circ \cdots\circ a_n]=\mathscr{GP}[a].\qedhere $$\end{proof}

\begin{Lem}\label{left standard}
Let $a=a_1\circ \cdots \circ a_m$ and $ b=b_1\circ \cdots \circ b_n\in X^*$  be complete blocks such that neither $a$ nor $b$ contain any left invertible letters. Then $\mathscr{GP}[a]\subseteq \mathscr{GP}[b]$ if and only if 
\[\supp(b)\subseteq \supp(a)\]
and with suitable relabelling of the letters of $a$ we have
\[\mathscr{GP}[a_i]\subseteq \mathscr{GP}[b_i]\]
 for each $1\leq i\leq n\leq m$. 
\end{Lem}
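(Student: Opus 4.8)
The plan is to deduce everything from the intersection formula for complete blocks, Lemma~\ref{key8}, reducing the two-sided statement to a collection of statements about a single letter $b_j$ of $b$. The converse is immediate: suppose $\supp(b)\subseteq\supp(a)$ and, after relabelling, $\mathscr{GP}[a_i]\subseteq\mathscr{GP}[b_i]$ for $1\le i\le n$. Since $\supp(b)\subseteq\supp(a)$ forces $n\le m$, Lemma~\ref{key8} gives
\[\mathscr{GP}[a]=\bigcap_{i=1}^m\mathscr{GP}[a_i]\subseteq\bigcap_{i=1}^n\mathscr{GP}[a_i]\subseteq\bigcap_{i=1}^n\mathscr{GP}[b_i]=\mathscr{GP}[b],\]
as required.

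For the forward direction, assume $\mathscr{GP}[a]\subseteq\mathscr{GP}[b]$. By Lemma~\ref{key8}, $\mathscr{GP}[b]=\bigcap_{j=1}^n\mathscr{GP}[b_j]$, so it suffices to analyse the inclusion $\mathscr{GP}[a]\subseteq\mathscr{GP}[b_j]$ for each individual letter $b_j$ and then reassemble. The crux is the following single-letter claim: writing $\beta_j=\supp(b_j)$, the inclusion $\mathscr{GP}[a]\subseteq\mathscr{GP}[b_j]$ forces $\beta_j\in\supp(a)$ and $M_{\beta_j}a_{(j)}\subseteq M_{\beta_j}b_j$, where $a_{(j)}$ denotes the unique letter of the complete block $a$ with support $\beta_j$ (equivalently, by Lemma~\ref{key7}, $\mathscr{GP}[a_{(j)}]\subseteq\mathscr{GP}[b_j]$).

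To prove the claim I would take a reduced word $w$ with $[a]=[w][b_j]=[w\circ b_j]$ and reduce the product $w\circ b_j$ using the right-hand dual of Lemma~\ref{product reduction}, which multiplies a reduced word by a single non-identity letter on the right. Exactly one of two things happens: either $w\circ b_j$ is already reduced, or $b_j$ shuffles to meet a letter $w_k$ of $w$ with $\supp(w_k)=\beta_j$ and merges into $w_kb_j$. The key point — and the place where the hypothesis is used — is that $w_kb_j$ cannot lie in $I$: an equality $w_kb_j=1_{\beta_j}$ would exhibit $w_k$ as a left inverse of $b_j$, contradicting that $b_j$ is not left invertible. Hence in both cases the reduced form of $w\circ b_j$ contains a letter of support $\beta_j$, namely $b_j$ itself in the first case and $w_kb_j$ in the second. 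Since this reduced form represents $[a]$ and $a$ is reduced, the two are shuffle equivalent by Theorem~\ref{shuffle}; as shuffles preserve the multiset of letters, $a$ has a (unique, since $a$ is a complete block) letter $a_{(j)}$ of support $\beta_j$, with $a_{(j)}\in M_{\beta_j}b_j$. This yields $\beta_j\in\supp(a)$ and $M_{\beta_j}a_{(j)}\subseteq M_{\beta_j}b_j$, proving the claim.

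Finally I would reassemble. Applying the claim to every $j$ shows each $\beta_j\in\supp(a)$, whence $\supp(b)=\{\beta_1,\dots,\beta_n\}\subseteq\supp(a)$; as $b$ is a complete block the $\beta_j$ are distinct, so the associated letters $a_{(1)},\dots,a_{(n)}$ of $a$ are distinct and may be relabelled as $a_1,\dots,a_n$. The claim then reads $\mathscr{GP}[a_i]\subseteq\mathscr{GP}[b_i]$ for $1\le i\le n$, completing the forward direction. The only delicate step is the single-letter claim, and within it the use of non-left-invertibility of $b_j$ to guarantee that $b_j$ does not cancel and that its support persists in $a$; everything else is bookkeeping on top of Lemma~\ref{key8}.
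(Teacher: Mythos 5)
Your proof is correct, but its forward direction is organized differently from the paper's. The paper takes a single reduced witness $c$ with $[a]=[c][b]$ and reduces the whole product $c\circ b$ in one pass: since $b$ has no left invertible letters, no deletion moves occur, so by Lemma~\ref{product reduction} the reduced form is $q\circ p_1b_1\circ\cdots\circ p_nb_n$ with each $p_i$ a letter of $c$ or $\epsilon$; comparing this with $a$ yields $\supp(a)\supseteq\supp(b)$ and, after relabelling, $a_i=p_ib_i$ --- all $n$ inclusions, with explicit multipliers, simultaneously. You instead push Lemma~\ref{key8} further: replacing $\mathscr{GP}[b]$ by $\bigcap_{j}\mathscr{GP}[b_j]$ reduces everything to the single-letter inclusions $\mathscr{GP}[a]\subseteq\mathscr{GP}[b_j]$, each handled by the right-hand dual of Lemma~\ref{product reduction} (legitimate, since the defining presentation is left-right symmetric and the paper itself freely uses right Foata normal forms), with non-left-invertibility of $b_j$ invoked exactly once per letter to rule out a deletion move $w_kb_j\in I$; uniqueness of the letter of support $\beta_j$ in the complete block $a$, together with the fact that shuffles preserve letters, then reassembles the data via Lemma~\ref{key7}. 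Your converse is the paper's argument marginally streamlined: the paper applies Lemma~\ref{key8} only to $b$ and then shuffles $a_1,\dots,a_n$ to the end of $a$ to get $\mathscr{GP}[a]\subseteq\mathscr{GP}[a_1\circ\cdots\circ a_n]$, whereas you apply Lemma~\ref{key8} to $a$ as well, so $\mathscr{GP}[a]=\bigcap_{i=1}^{m}\mathscr{GP}[a_i]$ makes the final inclusion immediate. The trade-off: the paper's one-pass block-level reduction is shorter and produces the relabelling and the multipliers $p_i$ directly, while your letter-by-letter route needs only the single-letter form of the reduction lemma and makes transparent exactly where each hypothesis (no left invertible letters; completeness of the blocks) enters.
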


\begin{proof}
Assume $\mathscr{GP}[a]\subseteq \mathscr{GP}[b]$. Then there exists a reduced word $c\in X^*$ such that $[a]=[c][b]$. Since $b$ contains no left invertible letters, it follows from Lemma \ref{product reduction} that $c\circ b$ reduces to  a word $$w=q\circ p_1b_1\circ \cdots \circ p_nb_n$$ where $c=q\circ p_1\circ\cdots \circ p_n$ (as a product in $X^*$) and $p_i$ is either some letter from $c$ or $\epsilon$, for each $1\leq i\leq n$. Consequently, $$[a]=[w]\mbox{ and } \supp(w)=\supp(a)\supseteq \supp(b).$$ Relabelling the elements of $a$ 
so that $\supp(a_i)=\supp(b_i)$ for $1\leq i\leq n$ we have that $a_i=p_ib_i$  and thus $\mathscr{GP}[a_i]\subseteq \mathscr{GP}[b_i]$ for each 
$1\leq i\leq n$.

Conversely, if $\supp(b)\subseteq \supp(a)$ and $\mathscr{GP}[a_i]\subseteq \mathscr{GP}[b_i]$, then, by Lemma \ref{key8}, we have $$\mathscr{GP}[a_1\circ  \cdots \circ a_n]=\bigcap_{1\leq i\leq n}\mathscr{GP}[a_i] \subseteq \bigcap_{1\leq i\leq n}\mathscr{GP}[b_i]=\mathscr{GP}[b_1\circ  \cdots \circ b_n]=\mathscr{GP}[b].$$
Furthermore, since $a$ is a complete block, it follows that $$\mathscr{GP}[a]=\mathscr{GP}[a_{n+1}\circ \cdots a_m\circ a_1\circ \cdots \circ a_n]\subseteq \mathscr{GP}[a_1\circ \cdots \circ a_n],$$ so that  $\mathscr{GP}[a]\subseteq \mathscr{GP}[b]$, as required.
\end{proof}

\begin{Cor}\label{cor-left standard}
Let $a=a_1\circ \cdots \circ a_m$ and $ b=b_1\circ \cdots \circ b_n\in X^*$ be complete blocks such that both $a$ and  $b$ contain no left invertible letters. Then $\mathscr{GP}[a]=\mathscr{GP}[b]$ if and only if \[\supp(a)=\supp(b)\]
and with suitable relabelling of the letters of $a$ we have
\[\mathscr{GP}[a_i]=\mathscr{GP}[b_i]\]
 for each $1\leq i\leq n=m$. 
\end{Cor}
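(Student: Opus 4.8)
The plan is to deduce this directly from Lemma~\ref{left standard} by splitting the equality $\mathscr{GP}[a]=\mathscr{GP}[b]$ into the two inclusions $\mathscr{GP}[a]\subseteq \mathscr{GP}[b]$ and $\mathscr{GP}[b]\subseteq \mathscr{GP}[a]$ and applying Lemma~\ref{left standard} to each. From the first inclusion Lemma~\ref{left standard} yields $\supp(b)\subseteq \supp(a)$, and from the second it yields $\supp(a)\subseteq \supp(b)$; together these give $\supp(a)=\supp(b)$. Since $a$ and $b$ are complete blocks, each contains exactly one letter for every vertex in its support, so $\supp(a)=\supp(b)$ forces $m=n$ and determines a unique bijective matching of the letters of $a$ with those of $b$ by equality of supports.

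The one point that needs care is that the two applications of Lemma~\ref{left standard} each come with their own ``suitable relabelling'', and these must be reconciled. Here the crucial observation is that the relabelling supplied by Lemma~\ref{left standard} is exactly the one that matches supports, that is, $\supp(a_i)=\supp(b_i)$; and because $a$ and $b$ are complete blocks this support-matching relabelling is unique. Hence both inclusions, after relabelling, refer to the same indexing, and we obtain simultaneously $\mathscr{GP}[a_i]\subseteq \mathscr{GP}[b_i]$ (from $\mathscr{GP}[a]\subseteq \mathscr{GP}[b]$) and $\mathscr{GP}[b_i]\subseteq \mathscr{GP}[a_i]$ (from $\mathscr{GP}[b]\subseteq \mathscr{GP}[a]$) for each $1\leq i\leq n$. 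Combining these gives $\mathscr{GP}[a_i]=\mathscr{GP}[b_i]$, as required.

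For the converse, suppose $\supp(a)=\supp(b)$ and, under the support-matching relabelling, $\mathscr{GP}[a_i]=\mathscr{GP}[b_i]$ for each $i$. Then in particular $\mathscr{GP}[a_i]\subseteq \mathscr{GP}[b_i]$ together with $\supp(b)\subseteq \supp(a)$, so Lemma~\ref{left standard} gives $\mathscr{GP}[a]\subseteq \mathscr{GP}[b]$; symmetrically $\mathscr{GP}[b]\subseteq \mathscr{GP}[a]$, whence equality. I expect the main (and really the only) obstacle to be the bookkeeping of the relabellings in the forward direction; once one records that the relabelling in Lemma~\ref{left standard} is the unique support-matching one for complete blocks, the two inclusions line up and everything else is immediate.
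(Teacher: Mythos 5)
Your proposal is correct and is exactly the argument the paper intends: the paper states this corollary without proof as an immediate consequence of Lemma~\ref{left standard}, applied to the two inclusions $\mathscr{GP}[a]\subseteq\mathscr{GP}[b]$ and $\mathscr{GP}[b]\subseteq\mathscr{GP}[a]$. Your extra care over reconciling the two relabellings is sound and well placed, since $\mathscr{GP}[a_i]\subseteq\mathscr{GP}[b_i]$ forces $\supp(a_i)=\supp(b_i)$ by Lemma~\ref{key7}, and in a complete block each vertex of the support carries a unique letter, so the support-matching relabelling is indeed the only possible one.
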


\begin{Lem}\label{key9}
Suppose each vertex monoid satisfies ACCPL. Let $\{w_i: i\in \mathbb N\}$ be a set of complete blocks with the same length $n$, containing no left invertible letters. Then the following chain $$\mathscr{GP}[w_1]\subseteq \mathscr{GP}[w_2]\subseteq \cdots$$ of principal left ideals  terminates.
\end{Lem}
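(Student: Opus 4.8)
The plan is to reduce the single chain in $\mathscr{GP}$ to $n$ parallel chains of principal left ideals inside the vertex monoids, where ACCPL is available by hypothesis. The whole argument is an assembly of the preceding lemmas, with the decisive observation being that fixing the block length forces the supports to stabilise immediately.

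First I would extract structural information from the inclusions. Applying Lemma~\ref{left standard} to each inclusion $\mathscr{GP}[w_i]\subseteq \mathscr{GP}[w_{i+1}]$ yields $\supp(w_{i+1})\subseteq \supp(w_i)$, so the supports form a descending chain $\supp(w_1)\supseteq \supp(w_2)\supseteq \cdots$. Because each $w_i$ is a complete block of length $n$, its letters have pairwise distinct supports, whence $|\supp(w_i)|=n$ for every $i$. A descending chain of $n$-element sets is constant, so $\supp(w_i)=\{\alpha_1,\dots,\alpha_n\}$ for all $i$, for one fixed complete set of vertices.

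Next I would label coordinates by support. Write $w_i = w_i^{(1)}\circ\cdots\circ w_i^{(n)}$ where $\supp(w_i^{(j)})=\alpha_j$; this is well defined since the supports within a complete block are distinct. The ``suitable relabelling'' provided by Lemma~\ref{left standard} must match equal supports, because Lemma~\ref{key7} forces $\supp(w_i^{(j)})=\supp(w_{i+1}^{(j)})$ whenever $\mathscr{GP}[w_i^{(j)}]\subseteq \mathscr{GP}[w_{i+1}^{(j)}]$. Hence the inclusion $\mathscr{GP}[w_i]\subseteq \mathscr{GP}[w_{i+1}]$ translates, coordinate by coordinate via Lemma~\ref{key7}, into $M_{\alpha_j}w_i^{(j)}\subseteq M_{\alpha_j}w_{i+1}^{(j)}$ for each $1\leq j\leq n$. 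We thus obtain $n$ ascending chains of principal left ideals $M_{\alpha_j}w_1^{(j)}\subseteq M_{\alpha_j}w_2^{(j)}\subseteq\cdots$ in the vertex monoids $M_{\alpha_j}$.

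Finally, since each $M_{\alpha_j}$ satisfies ACCPL, each of these $n$ chains stabilises; taking $N$ to be the maximum of the finitely many stabilisation indices, for all $i\geq N$ we have $M_{\alpha_j}w_i^{(j)}=M_{\alpha_j}w_{i+1}^{(j)}$ for every $j$. Using the equality clause of Lemma~\ref{key7} (legitimate as no letter is left invertible) and then Corollary~\ref{cor-left standard}, this gives $\mathscr{GP}[w_i]=\mathscr{GP}[w_{i+1}]$ for all $i\geq N$, so the original chain terminates. The only point requiring genuine care is the bookkeeping that the relabelling of Lemma~\ref{left standard} is forced by supports and is therefore consistent along the whole chain; once the common support set is pinned down, this is automatic and the rest is routine.
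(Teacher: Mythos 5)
Your proposal is correct and follows essentially the same route as the paper: Lemma~\ref{left standard} pins down the common support (the paper states this more briskly, while you spell out why fixed length $n$ forces the descending supports to be constant), Lemma~\ref{key7} transfers each coordinate inclusion to a chain $M_{\alpha_j}w_1^{(j)}\subseteq M_{\alpha_j}w_2^{(j)}\subseteq\cdots$ in the vertex monoid, ACCPL stabilises all $n$ chains past a common index, and Lemma~\ref{key7} together with Corollary~\ref{cor-left standard} reassembles the equalities into $\mathscr{GP}[w_i]=\mathscr{GP}[w_{i+1}]$. Your explicit justification that the relabelling in Lemma~\ref{left standard} is forced by supports, and hence consistent along the chain, is a point the paper leaves implicit but is the same argument.
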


\begin{proof}
By Lemma \ref{left standard}, we have $\supp(w_i)=\supp(w_j)$ for all $i, j\in \mathbb N$. Let $w_i=x^i_{1}\circ \cdots \circ x^i_{n}\in X^*$ for each $i\in \mathbb N$. Consider $w_1=x^1_{1}\circ \cdots \circ x^1_{n}$ and let $\supp(x^1_{k})=\alpha_k$ for each $1\leq k\leq n$. By Lemma \ref{left standard} and with suitable relabelling of the letters of $w_i$ for all $i\leq 2$,  we have the following $n$ chains $$\mathscr{GP}[x^1_{i}]\subseteq \mathscr{GP}[x^2_i]\subseteq \cdots$$
where $1\leq i\leq n$. Furthermore, it follows from Lemma \ref{key7} that for  $1\leq i\leq n$ we have
$$M_{\alpha_i}x^1_{i}\subseteq M_{\alpha_i}x^2_i\subseteq \cdots$$

Since each vertex monoid satisfies the ascending chain condition on principal left ideals, there exist $t_i$, where $1\leq i\leq n$, such that 
$M_{\alpha_i}x^{t_i}_{i}=M_{\alpha_i}x^{t_i+1}_{i}=\cdots.$ Let $d$ be the maximum of $t_i$, where $1 \leq i\leq n$.  Then 
$$M_{\alpha_i}x^{d}_{i}=M_{\alpha_i}x^{d+1}_{i}=\cdots$$
for $1\leq i\leq n$. It follows from Lemma~\ref{key4} that 
$$\mathscr{GP}[x^d_{i}]=\mathscr{GP}[x^{d+1}_{i}]=\cdots$$
again for $1\leq i\leq n$. Consequently, 
by Corollary \ref{cor-left standard}, we have $$\mathscr{GP}[w_d]=\mathscr{GP}[w_{d+1}]=\cdots$$ implying the chain $$\mathscr{GP}[w_1]\subseteq \mathscr{GP}[w_2]\subseteq \cdots$$ terminates, as required. 
\end{proof}

\begin{lemma}\label{key5}

For each word $w\in X^*$, there exists a reduced word $u\in X^*$ such that $\mathscr{GP} [w]=\mathscr{GP}[u]$, where $u$ has no left invertible elements that can be shuffled to the front.
\end{lemma}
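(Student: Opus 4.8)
The plan is to peel left invertible front letters off $w$ one at a time, via an induction on word length. Since $\mathscr{GP}[w]$ depends only on the class $[w]$, I first replace $w$ by a reduced form (Theorem~\ref{shuffle}), so I may assume $w$ is reduced. If no left invertible letter of $w$ can be shuffled to the front, then $u=w$ already works, giving the base of the induction (including the case $w=\epsilon$).

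So suppose some left invertible letter can be shuffled to the front. By Corollary~\ref{cor:firstblock} the letters that can be shuffled to the front are precisely those of the first block $w_1$ of a left Foata normal form of $w$, and since $w_1$ is a complete block its letters pairwise commute. Hence I can choose a shuffle of $w$ that places such a left invertible letter $p$ in the first position, writing the resulting reduced word as $p\circ v$; here $p\in M_\alpha$ with $\alpha=\supp(p)$, and $v$ is reduced with $|v|=|w|-1$ because $p\circ v$ is shuffle equivalent to the reduced word $w$ and therefore reduced.

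The crux is the claim $\mathscr{GP}[w]=\mathscr{GP}[v]$. The inclusion $\mathscr{GP}[w]\subseteq\mathscr{GP}[v]$ is immediate from $[w]=[p][v]$. For the reverse inclusion I use left invertibility: pick $p'\in M_\alpha$ with $p'p=1_\alpha$. Since $1_\alpha=\epsilon$ in $\mathscr{GP}$ (relation $R_{\textsf{id}}$) and $[p'][p]=[p'p]=[1_\alpha]=1$ (relation $R_{\textsf{v}}$), left multiplication by $[p']$ gives $[p'][w]=[p'][p][v]=[v]$, so $[v]\in\mathscr{GP}[w]$ and thus $\mathscr{GP}[v]\subseteq\mathscr{GP}[w]$. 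As $|v|<|w|$, the inductive hypothesis supplies a reduced word $u$ with $\mathscr{GP}[v]=\mathscr{GP}[u]$ having no left invertible letter that shuffles to the front; then $\mathscr{GP}[w]=\mathscr{GP}[u]$, completing the induction.

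The only delicate point is the passage in the second paragraph: that a left invertible letter which can be shuffled to the front can actually be moved into the literal first position of a reduced word representing $[w]$. This is exactly what Corollary~\ref{cor:firstblock}, together with the complete block (hence pairwise commuting) structure of the first Foata block, delivers; once that is in hand, the remainder is a one-line computation with the defining relations and a routine length induction.
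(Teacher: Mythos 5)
Your proof is correct. The one point that needed care --- that a letter which ``can be shuffled to the front'' can actually be placed in the literal first position of a reduced representative --- is handled properly: by Corollary~\ref{cor:firstblock} such letters lie in the first Foata block, which is a complete block, so the letter commutes past everything preceding it, and the resulting word $p\circ v$ is reduced (being shuffle equivalent to the reduced word $w$), whence $v$ is reduced of length $|w|-1$ and the induction is well-founded. The two-sided ideal equality $\mathscr{GP}[w]=\mathscr{GP}[v]$ via $[w]=[p][v]$ and $[p'][w]=[v]$ is a correct one-line computation, noting (as the paper itself does in Section~\ref{sec:L}) that a left invertible letter $p\in M_\alpha$ admits a left inverse $p'$ inside the vertex monoid $M_\alpha$ itself.

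Your route does differ from the paper's, though more in presentation than in underlying idea. The paper's proof is a one-line citation of \cite[Lemma 4.2]{gould:2023}, which produces in a single step a factorisation $[w]=[a][u]$ with $a\circ u$ reduced, $[a]$ left invertible, and $u$ having no left invertible letters that shuffle to the front; the ideal equality $\mathscr{GP}[w]=\mathscr{GP}[u]$ then follows at once. You instead peel off one left invertible front letter at a time by induction on length, verifying the principal left ideal equality at each step directly from the defining relations $R_{\textsf{id}}$ and $R_{\textsf{v}}$. What your argument buys is self-containedness: it avoids the external citation and in effect reproves the relevant consequence of that lemma from first principles (indeed, iterating your peeling steps reconstructs the invertible prefix $[a]=[p_1]\cdots[p_r]$ of the cited factorisation). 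What the paper's version buys is brevity and a slightly stronger packaged statement --- the whole left invertible prefix extracted at once with $a\circ u$ reduced --- which is occasionally convenient elsewhere, but for the purposes of Lemma~\ref{key5} the two arguments deliver exactly the same conclusion.
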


\begin{proof}
It follows from  \cite[Lemma 4.2]{gould:2023} that $[w]=[a][u]$ where $a\circ u$ is reduced,  $[a]$ is left invertible and $u$ has no left invertible elements that can be shuffled to the front. This implies $\mathscr{GP}[w]=\mathscr{GP}[u]$. 
\end{proof}

\begin{Lem}\label{key6}
Let $u=u_1\circ \cdots \circ u_m, v=v_1\circ \cdots \circ v_n\in X^*$ be left Foata normal forms with blocks $u_i, v_j$, where  $1\leq i\leq m, 1\leq j\leq n$. Suppose $v_1$ contains no left invertible letters and $\mathscr{GP}[u]\subseteq \mathscr{GP}[v]$. Then one of the following holds
\begin{enumerate}
\item[(i)] $|u|>|v|$;

\item[(ii)] $|u|=|v|$, $m=n$, $\mathscr{GP}[u_1]\subseteq \mathscr{GP} [v_1]$ and $[u_i]=[v_i]$ for all $2\leq i\leq m$.
\end{enumerate}
\end{Lem}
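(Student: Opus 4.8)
The plan is to convert the ideal containment into a concrete equation and then exploit the hypothesis on $v_1$ through Lemma~\ref{standard multi}. Since $\mathscr{GP}[u]\subseteq \mathscr{GP}[v]$ holds exactly when $[u]=[c][v]$ for some element of $\mathscr{GP}$, and every element has a reduced representative, I would fix a reduced word $c\in X^*$ with $[u]=[c][v]$. As $v=v_1\circ\cdots\circ v_n$ is a left Foata normal form whose first block $v_1$ carries no left invertible letters, Lemma~\ref{standard multi} applies with $a=c$: writing $z$ for a reduced form of $c\circ v_1$, the word $z\circ v_2\circ\cdots\circ v_n$ is a reduced form of $c\circ v$, hence of $u$, with $|z|\geq |v_1|$ and equality precisely when $\supp(c)\subseteq \supp(v_1)$.

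Counting lengths gives $|u|=|z|+\bigl(|v|-|v_1|\bigr)\geq |v|$. If $|z|>|v_1|$ then $|u|>|v|$ and we are in case~(i). The remaining work is entirely in the case $|z|=|v_1|$, where $|u|=|v|$ and $\supp(c)\subseteq \supp(v_1)$.

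In that case I would first pin down the shape of $z$. Since $v_1$ is a complete block, $|v_1|=|\supp(v_1)|$; because $v_1$ has no left invertible letters, any reduced form $z$ of $c\circ v_1$ satisfies $\supp(z)\supseteq \supp(v_1)$ (exactly as used in the proof of Lemma~\ref{standard multi}), so $|z|\geq |\supp(z)|\geq |\supp(v_1)|=|v_1|$. The equality $|z|=|v_1|$ therefore forces $\supp(z)=\supp(v_1)$ with exactly one letter of $z$ per vertex; as $\supp(v_1)$ is complete, $z$ is itself a complete block with $\supp(z)=\supp(v_1)$. I would then check that $z\circ v_2\circ\cdots\circ v_n$ is in fact a left Foata normal form: it is reduced, each block has complete support, and the adjacency condition in Definition~\ref{defn:lcrf}(iii) between consecutive blocks transfers verbatim from that of $v$, using $\supp(z)=\supp(v_1)$ for the step from $z$ to $v_2$. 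Applying the uniqueness part of Theorem~\ref{thm:uniqueness} to the two left Foata normal forms $u_1\circ\cdots\circ u_m$ and $z\circ v_2\circ\cdots\circ v_n$ of $u$ yields $m=n$, together with $[u_1]=[z]$ and $[u_i]=[v_i]$ for $2\leq i\leq m$. Finally $[u_1]=[z]=[c][v_1]$ gives $\mathscr{GP}[u_1]\subseteq \mathscr{GP}[v_1]$, completing case~(ii).

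The main obstacle is the equality case: one must be certain that $z$ collapses to a single complete block sharing the support of $v_1$, rather than spilling extra letters that would migrate into a later block, since only then does $z\circ v_2\circ\cdots\circ v_n$ genuinely qualify as a left Foata normal form and let uniqueness deliver the desired block-by-block comparison. The support-versus-length bookkeeping above is what rules out any such spill, and it relies essentially on $v_1$ containing no left invertible letters.
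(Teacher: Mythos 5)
Your proof is correct and follows essentially the same route as the paper's: fix a reduced $c$ with $[u]=[c][v]$, apply Lemma~\ref{standard multi} to get the reduced form $z\circ v_2\circ\cdots\circ v_n$ of $u$ and the dichotomy $|z|\geq|v_1|$, and in the equality case show $z$ is a complete block with $\supp(z)=\supp(v_1)$ so that uniqueness of left Foata normal forms (Theorem~\ref{thm:uniqueness}) yields $m=n$, $[u_1]=[z]=[c][v_1]$ and $[u_i]=[v_i]$ for $i\geq 2$. Your support-versus-length bookkeeping in the equality case supplies a justification the paper leaves implicit, but the argument is the same.
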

\begin{proof}
Since $\mathscr{GP}[u]\subseteq \mathscr{GP}[v]$, there exists a reduced word $c\in X^*$ such that $[u]=[c][v]$. From Lemma \ref{standard multi} it follows that $|u|\geq |v|$. Suppose $|u|=|v|$. Again, by Lemma \ref{standard multi}, $\supp(c)\subseteq \supp(v_1)$ . Let $z$ be a reduced form of $c\circ v_1$. Then $z$ is a complete block and $\supp(z)=\supp(v_1)$. It follows that $z\circ v_2\circ \cdots \circ v_n$ is a left Foata normal form of $c\circ v$. By Lemma \ref{thm:uniqueness} this implies  that $m=n$, $[u_1]=[z]=[c][v_1]$, and so $\mathscr{GP}[u_1]\subseteq \mathscr{GP} [v_1]$, and $[u_i]=[v_i]$ for all $2\leq i\leq m$.
\end{proof}

It is clear that if  $\mathscr{GP}[u]\subseteq \mathscr{GP} [v]$ then $\mathscr{GP}[u][w]\subseteq \mathscr{GP} [v][w]$ for any
$u,v,w\in X^*$.

\begin{Cor}
Let $u=u_1\circ \cdots \circ u_m$ and $v=v_1\circ \cdots \circ v_n\in X^*$ be left Foata normal forms with blocks $u_i, v_j$, for $1\leq i\leq m, 1\leq j\leq n$, such that $u_1$ and $v_1$ contain no left invertible letters. Then $\mathscr{GP}[u]=\mathscr{GP}[v]$ if and only if $$|u|=|v|, \quad m=n, \quad \mathscr{GP}[u_1]=\mathscr{GP} [v_1], \quad [u_i]=[v_i]$$ for each $2\leq i\leq m.$
\end{Cor}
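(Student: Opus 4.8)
The plan is to deduce the corollary from Lemma~\ref{key6} by applying it to \emph{each} of the two inclusions making up the equality $\mathscr{GP}[u]=\mathscr{GP}[v]$, exploiting the hypothesis that \emph{both} $u_1$ and $v_1$ contain no left invertible letters. This symmetry is exactly what the asymmetric Lemma~\ref{key6} lacks on its own, and it is what makes the statement fall out cleanly; the role of the length inequalities in that lemma is to let a single length comparison pin everything down.

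For the forward direction, I would assume $\mathscr{GP}[u]=\mathscr{GP}[v]$ and first apply Lemma~\ref{key6} to the inclusion $\mathscr{GP}[u]\subseteq\mathscr{GP}[v]$. Since $v_1$ has no left invertible letters this is legitimate and yields either $|u|>|v|$, or else $|u|=|v|$, $m=n$, $\mathscr{GP}[u_1]\subseteq\mathscr{GP}[v_1]$ and $[u_i]=[v_i]$ for $2\leq i\leq m$. Then I would apply Lemma~\ref{key6} to the reverse inclusion $\mathscr{GP}[v]\subseteq\mathscr{GP}[u]$; since $u_1$ has no left invertible letters this too is legitimate, and with the roles swapped it yields either $|v|>|u|$, or else $|v|=|u|$ together with $\mathscr{GP}[v_1]\subseteq\mathscr{GP}[u_1]$ and $[v_j]=[u_j]$. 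A short length comparison rules out the strict alternatives: if $|u|>|v|$ then the second application forces $|v|\geq|u|$, a contradiction, and symmetrically $|v|>|u|$ is impossible. Hence $|u|=|v|$, so case (ii) of both applications holds. Combining $\mathscr{GP}[u_1]\subseteq\mathscr{GP}[v_1]$ with $\mathscr{GP}[v_1]\subseteq\mathscr{GP}[u_1]$ gives $\mathscr{GP}[u_1]=\mathscr{GP}[v_1]$, while case (ii) already records $m=n$ and $[u_i]=[v_i]$ for $2\leq i\leq m$, completing this direction.

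For the converse, I would use the observation stated immediately before the corollary, that right multiplication by a fixed element preserves inclusions of principal left ideals, namely $\mathscr{GP}[p]\subseteq\mathscr{GP}[q]$ implies $\mathscr{GP}[p][w]\subseteq\mathscr{GP}[q][w]$. I set $w=u_2\circ\cdots\circ u_m$; because $m=n$ and $[u_i]=[v_i]$ for $2\leq i\leq m$, this word also represents $[v_2\circ\cdots\circ v_m]$, so it is a genuine common tail. Since $[p][w]=[p\circ w]$ in $\mathscr{GP}$, multiplying $\mathscr{GP}[u_1]\subseteq\mathscr{GP}[v_1]$ on the right by $[w]$ gives $\mathscr{GP}[u]\subseteq\mathscr{GP}[v]$, and multiplying $\mathscr{GP}[v_1]\subseteq\mathscr{GP}[u_1]$ on the right by $[w]$ gives $\mathscr{GP}[v]\subseteq\mathscr{GP}[u]$; together these yield $\mathscr{GP}[u]=\mathscr{GP}[v]$.

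The argument is essentially bookkeeping, so I do not anticipate a genuine obstacle. The one point requiring care is verifying that Lemma~\ref{key6} is applicable to \emph{both} inclusions, which is precisely why the hypothesis demands that both $u_1$ and $v_1$ be free of left invertible letters; and in the converse one must confirm that $[p][w]=[p\circ w]$ and that the common tail $w$ is well defined, both of which rest only on $m=n$ and $[u_i]=[v_i]$.
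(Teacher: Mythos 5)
Your proof is correct and follows exactly the route the paper intends: the corollary is stated there without an explicit proof, relying on a double application of Lemma~\ref{key6} (legitimate in both directions since both $u_1$ and $v_1$ are free of left invertible letters) for the forward implication, and on the displayed remark that $\mathscr{GP}[u]\subseteq \mathscr{GP}[v]$ implies $\mathscr{GP}[u][w]\subseteq \mathscr{GP}[v][w]$, applied with the common tail $w=u_2\circ\cdots\circ u_m$, for the converse. Your length comparison ruling out alternative (i) of Lemma~\ref{key6} in each direction, and your verification that $[u_2\circ\cdots\circ u_m]=[v_2\circ\cdots\circ v_n]$ makes $w$ a genuine common tail, fill in precisely the bookkeeping the paper leaves implicit.
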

\begin{Thm}\label{acc}
A graph product $\mathscr{GP}=\mathscr{GP}(\Gamma,\mathcal{M})$ satisfies the ascending chain condition on  principal left ideals if and only if each so does every vertex monoid.
\end{Thm}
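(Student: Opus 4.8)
The plan is to prove the non-trivial direction; the forward implication is exactly Corollary~\ref{cor:oneway}, since each $M_\alpha$ is a retract of $\mathscr{GP}$. So assume every vertex monoid satisfies ACCPL and consider an arbitrary ascending chain $\mathscr{GP}[w_1]\subseteq \mathscr{GP}[w_2]\subseteq\cdots$ of principal left ideals; I must show it stabilises. The overall idea is to normalise the generators so that each is a left Foata normal form whose first block contains no left invertible letters, and then to control the chain by tracking two quantities: the length of the generator and the first block.

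First I would replace each $w_i$ by a more convenient generator. By Lemma~\ref{key5} there is a reduced word $u_i$ with $\mathscr{GP}[w_i]=\mathscr{GP}[u_i]$ having no left invertible letters that shuffle to the front; putting $u_i$ into left Foata normal form (Theorem~\ref{thm:uniqueness}) then gives a generator whose first block contains no left invertible letters. Since this does not change the ideals, I may assume from the outset that each $w_i=w_{i,1}\circ\cdots\circ w_{i,k_i}$ is a left Foata normal form with first block free of left invertible letters. Now apply Lemma~\ref{key6} to each inclusion $\mathscr{GP}[w_i]\subseteq\mathscr{GP}[w_{i+1}]$ (taking $v=w_{i+1}$, whose first block is suitable): either $|w_i|>|w_{i+1}|$ or $|w_i|=|w_{i+1}|$. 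Thus $(|w_i|)_i$ is a non-increasing sequence of non-negative integers and so is eventually constant, say $|w_i|=|w_N|$ for all $i\geq N$.

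For $i\geq N$ we are always in case (ii) of Lemma~\ref{key6}: the block lengths agree, $k_i=k$ is constant, $[w_{i,j}]=[w_{i+1,j}]$ for all $2\leq j\leq k$, and $\mathscr{GP}[w_{i,1}]\subseteq\mathscr{GP}[w_{i+1,1}]$. Hence for $i\geq N$ the tail $w_{i,2}\circ\cdots\circ w_{i,k}$ is constant up to equivalence, so its length is fixed and therefore $|w_{i,1}|$ is a constant $n$. The first blocks $w_{i,1}$ are thus complete blocks of common length $n$ with no left invertible letters, and they form an ascending chain $\mathscr{GP}[w_{N,1}]\subseteq\mathscr{GP}[w_{N+1,1}]\subseteq\cdots$. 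This is precisely the situation of Lemma~\ref{key9}, whose proof is where the ACCPL hypothesis on the vertex monoids is actually consumed; it yields some $M\geq N$ with $\mathscr{GP}[w_{i,1}]=\mathscr{GP}[w_{M,1}]$ for all $i\geq M$. Finally, for $i\geq M$ the generators $w_i$ and $w_M$ satisfy all the conditions of the Corollary preceding the theorem (equal length, equal block count, equal tail blocks, and equal first-block ideals), whence $\mathscr{GP}[w_i]=\mathscr{GP}[w_M]$ and the original chain stabilises.

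I expect the main obstacle to be bookkeeping rather than a single deep step: one must check that the normalisation via Lemma~\ref{key5} genuinely preserves each ideal (so the chain is unchanged), that Lemma~\ref{key6} is applied with the correct generator in the role of $v$, and above all that once the lengths stabilise the first blocks really do meet the hypotheses of Lemma~\ref{key9}, namely that they are complete blocks of a fixed common length with no left invertible letters. The only place the vertex-monoid hypothesis enters is inside Lemma~\ref{key9}, via the componentwise reduction to chains $M_{\alpha_i}x^1_i\subseteq M_{\alpha_i}x^2_i\subseteq\cdots$; everything else is structural.
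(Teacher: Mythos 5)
Your proposal is correct and follows essentially the same route as the paper's proof: normalise the generators via Lemma~\ref{key5}, use Lemma~\ref{key6} to force eventual equality of lengths, block counts and tail blocks, and consume the ACCPL hypothesis on the vertex monoids exactly once, through Lemma~\ref{key9} applied to the ascending chain of first blocks. The only cosmetic difference is that you track the total lengths $|w_i|$ whereas the paper phrases its non-increasing sequence in terms of the first-block lengths $|p_1^i|$; since in the stable regime the tails are fixed up to equivalence these stabilise together, and the two arguments conclude identically.
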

\begin{proof} One direction is  Corollary~\ref{cor:oneway}.

For the converse, suppose  that each vertex monoid satisfies the ascending chain condition on  principal left ideals. Let $\{w_i\in X^*: i\in \mathbb{N}\}$ be a set of reduced words and  $$\mathscr{GP}[w_1]\subseteq \mathscr{GP}[w_2]\subseteq \cdots$$   a chain of principal left ideals of $\mathscr{GP}$. By Lemma \ref{key5}, we may assume each $$w_i=p^i_1\circ \cdots \circ p^i_{\lambda(i)}$$ is in left Foata normal form  with block length $\lambda(i)$ and blocks $p^i_j\in X^*$, $1\leq j\leq \lambda(i)$, and  that the first block $p^i_1$ contains no left invertible letters.

By Lemma~\ref{key6} we have that 
\[|p_1^1|\geq |p_1^2|\geq \cdots.\]
Let $n$ be such that $|p_1^n|=|p_1^{n+k}|$ for all $k\geq 0$. Again by Lemma~\ref{key6} we have that
$\lambda(n)=\lambda(n+k)$ and  $ [p^n_{j}]=[p^{n+k}_{j}]$ for all $k\geq 0$ and $2\leq j\leq \lambda(n)$, 
and further,
\[\mathscr{GP}[p_1^n]\subseteq \mathscr{GP}[p_1^{n+1}]\subseteq\cdots.\] 
From Lemma~\ref{key9} this ascending chain terminates. Putting  
 $w=p^n_{2}\circ\cdots \circ p^n_{\lambda(n)}$ and noting that for $i\geq n$ we have $[w_i]=[p_1^i][w]$ it follows that 
 \[\mathscr{GP}[w_n]\subseteq \mathscr{GP}[w_{n+1}]\subseteq\cdots,\]
and hence our original ascending chain, terminates. 
\end{proof}

\begin{Cor} cf. \cite{miller:2023,stopar:2012}
A free product or restricted direct product of an indexed set of  monoids $\{ M_{i}:i\in I\}$ satisfies the ascending chain condition on principal left ideals if and only if so does each $M_i,i\in I$.
\end{Cor}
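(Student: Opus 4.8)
The plan is to deduce this immediately from Theorem~\ref{acc} by recognising both constructions as instances of the graph product. Recall from Remark~\ref{rem:freeprod} that a free product of the family $\{M_i : i\in I\}$ is the graph product $\mathscr{GP}(\Gamma,\mathcal{M})$ with $V=I$, $\mathcal{M}=\{M_i : i\in I\}$ (taken to be mutually disjoint, after relabelling if necessary), and $E=\emptyset$; whereas the restricted direct product is the graph product over the same vertex set and vertex monoids but with $E$ equal to the set of all pairs of distinct vertices, that is, the complete graph on $V$. So the two statements of the corollary are precisely the two extremal specialisations of Theorem~\ref{acc}.

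First I would note that the graph product framework of Definition~\ref{defn:graphprodmonoids}, and hence Theorem~\ref{acc}, places no finiteness restriction on the vertex set $V$; consequently both assertions are covered even when the index set $I$ is infinite. Since any trivial vertex monoids may be deleted without changing the graph product (again by Remark~\ref{rem:freeprod}) and any trivial monoid vacuously satisfies ACCPL, there is no loss in assuming each $M_i$ is non-trivial, as demanded by our standing conventions; deleting such factors alters neither the free product nor the restricted direct product up to isomorphism.

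With these identifications in place, Theorem~\ref{acc} applies verbatim, yielding that the free product (respectively, the restricted direct product) satisfies ACCPL if and only if each vertex monoid $M_i$ does. I do not expect any genuine obstacle here: the entire content of the corollary is the specialisation of Theorem~\ref{acc} to the two choices $E=\emptyset$ and $E$ complete, and the only point requiring any care is the routine verification, already recorded in Remark~\ref{rem:freeprod}, that these extremal graph products coincide with the free product and the restricted direct product respectively.
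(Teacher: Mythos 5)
Your proposal is correct and is precisely the argument the paper intends: the corollary is stated without proof because it is the immediate specialisation of Theorem~\ref{acc} to the two extremal cases $E=\emptyset$ (free product) and $E$ complete (restricted direct product) recorded in Remark~\ref{rem:freeprod}. Your additional observations --- that $V$ may be infinite and that trivial vertex monoids can be discarded harmlessly to meet the standing non-triviality convention --- are exactly the routine checks needed and match the paper's framework.
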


\section{Weakly left noetherian}\label{sec:wln}

A monoid $M$ is \emph{weakly left noetherian} if it has the ascending chain condition on left ideals; equivalently, every left ideal is finitely generated (as a left ideal).  From \cite{miller:2021}, the monoid $M$ is weakly left noetherian if and only if it satisfies the ascending chain condition on principal left ideals and contains no
 infinite anti-chain of principal left ideals (under $\subseteq$).  Being weakly left noetherian is therefore a strictly stronger property than having the ascending chain condition on principal left ideals, as may easily be seen by considering
an infinite null semigroup with an identity adjoined. On the other hand, it is not as strong as that of being \emph{left noetherian}, which states that the ascending chain condition on  the lattice of left congruences holds (equivalently, every left congruence is finitely generated). We remark that in the context of rings,  the analogous properties of being weakly left noetherian and left noetherian coincide and imply \emph{left coherency}. Weakly left noetherian monoids have been extensively investigated, see, for example, \cite{hotzel:1976, miller:2021, sat:1972}.

In this section we consider  the property of being weakly left noetherian in the context of graph products. As presented in  Theorem~\ref{wln}, we will see that it is somewhat less amenable in this regard than that of the ascending chain condition on principal left ideals. 

In the positive direction, \cite[Lemma 4.1]{miller:2021} gives that the class of weakly left noetherian monoids is closed under morphic image, and so certainly under retract. Note that the class of  monoids satisfying the ascending chain condition on principal left ideals  is not closed under morphic image, which is clear from considering the fact that any free monoid satisfies this  this condition.

\begin{corollary}\label{cor:retract3} If $\mathscr{GP}$ is weakly left noetherian, then so is each vertex monoid.
\end{corollary}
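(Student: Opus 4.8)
The plan is to derive Corollary~\ref{cor:retract3} directly from the two facts the excerpt has already assembled, namely that the class of weakly left noetherian monoids is closed under morphic image (cited from \cite[Lemma 4.1]{miller:2021}) and that each vertex monoid $M_\alpha$ is a retract of $\mathscr{GP}$. Since a retract comes equipped with a surjective retraction morphism $\theta\colon \mathscr{GP}\rightarrow M_\alpha$, the weakly left noetherian property passes from $\mathscr{GP}$ to $M_\alpha$ as a morphic image, and there is essentially nothing further to prove.

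In more detail, I would first recall from the last bullet point of Remark~\ref{rem:freeprod} that for each $\alpha\in V$ the vertex monoid $M_\alpha$ is naturally embedded in $\mathscr{GP}$ via $a\mapsto[a]$, and that its image is a retract of $\mathscr{GP}$; that is, there is an onto morphism $\theta\colon\mathscr{GP}\rightarrow M_\alpha$ restricting to the identity on (the copy of) $M_\alpha$. In particular $M_\alpha$ is a morphic image of $\mathscr{GP}$. I would then invoke \cite[Lemma 4.1]{miller:2021}, already noted in the paragraph preceding the corollary, which states that the class of weakly left noetherian monoids is closed under morphic image. Combining these two observations, if $\mathscr{GP}$ is weakly left noetherian then so is its morphic image $M_\alpha$, for every $\alpha\in V$. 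This completes the argument.

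There is no real obstacle here: the content has been front-loaded into the cited closure result and the structural fact that vertex monoids are retracts. If one wished to make the proof self-contained rather than citing \cite{miller:2021}, the only step requiring care would be verifying directly that a morphic (or retract) image inherits the ascending chain condition on left ideals. For completeness I would sketch that verification as follows: given a surjective morphism $\theta\colon S\rightarrow T$ and a left ideal $L$ of $T$, its full preimage $\theta^{-1}(L)$ is a left ideal of $S$, and $\theta$ maps left ideals of $S$ onto left ideals of $T$; an ascending chain $L_1\subseteq L_2\subseteq\cdots$ of left ideals of $T$ pulls back to an ascending chain $\theta^{-1}(L_1)\subseteq\theta^{-1}(L_2)\subseteq\cdots$ in $S$, which stabilises by hypothesis, and applying $\theta$ (using surjectivity to recover $L_i=\theta(\theta^{-1}(L_i))$) shows the original chain stabilises. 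Since the paper already cites the closure result explicitly, however, I would keep the proof to a one- or two-sentence application of that result together with Remark~\ref{rem:freeprod}.
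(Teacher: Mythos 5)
Your proof is correct and takes essentially the same route as the paper, which likewise deduces the corollary from \cite[Lemma 4.1]{miller:2021} (weak left noetherianity is preserved under morphic image, hence under retract) combined with the fact, noted in Remark~\ref{rem:freeprod}, that each vertex monoid $M_\alpha$ is a retract of $\mathscr{GP}$. Your optional self-contained verification that the ascending chain condition on left ideals passes to morphic images (pulling back a chain along the surjection, letting it stabilise, and pushing forward using $\theta(\theta^{-1}(L))=L$) is also sound, though, as you say, unnecessary given the cited result.
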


Unlike the case for the ascending chain condition on principal left ideals, the converse to Corollary~\ref{cor:retract3} does not hold in general. This  follows from the known  result  for free products of monoids.

\begin{Lem}\label{key13}\cite{miller:2021}
Let $M$ and $N$ be non-trivial monoids. 
\begin{enumerate}
\item[(i)] The free product  of $M$ and $N$ is weakly left noetherian if and only if either  $|M|=|N|=2$ or both $M$ and $N$ are groups.
    \item[(ii)] The direct product  of  $M$ and $N$ is weakly left noetherian if and only if  both $M$ and $N$ are weakly left noetherian.
    \end{enumerate}
\end{Lem}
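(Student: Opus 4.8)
The plan rests on reformulating weak left noetherianity as a well-quasi-order condition, which makes both parts transparent. For a monoid $S$ order its elements by left divisibility, $s \preceq t$ iff $t \in Ss$. Then the left ideals of $S$ are exactly the up-sets of $(S,\preceq)$, so ``every left ideal is finitely generated'', equivalently the ascending chain condition on left ideals, is equivalent to $(S,\preceq)$ having no infinite antichain and no infinite strictly descending chain, i.e.\ to $(S,\preceq)$ being a well-quasi-order. I would record this equivalence first: $S$ is weakly left noetherian if and only if $(S,\preceq)$ is a well-quasi-order.

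For part (ii) observe that left divisibility in $M\times N$ is precisely the product of the orders on $M$ and $N$, since $(a,b)\preceq (a',b')$ iff $a\preceq a'$ and $b\preceq b'$. The ``only if'' direction is immediate, as each factor is a morphic image (indeed a retract) of $M\times N$ and the class is closed under morphic images \cite{miller:2021}; equivalently $\preceq_M$ is a restriction of the product order and a sub-quasi-order of a well-quasi-order is one. For the converse I would invoke the classical fact (Dickson's lemma) that a finite product of well-quasi-orders is a well-quasi-order, so if $M$ and $N$ are weakly left noetherian then $(M\times N,\preceq)$ is a well-quasi-order. No genuine obstacle arises once the reformulation is in place.

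For part (i) the sufficiency of ``both groups'' is immediate: a free product of groups is a group, and in any group the only left ideals are $\emptyset$ and the whole monoid, so $(S,\preceq)$ is trivially a well-quasi-order. The sufficiency of $|M|=|N|=2$ I would verify by hand: each factor is then $\mathbb{Z}_2$ or the two-element idempotent monoid, reduced words are alternating strings in two fixed letters, and the resulting left-divisibility poset is a concrete small structure which one checks directly to be a well-quasi-order. For the necessity, suppose $M*N$ is weakly left noetherian; then by the retract argument $M$ and $N$ are weakly left noetherian, hence have ACCPL, and by the free-product result of Theorem~\ref{acc} the free product $M*N$ also has ACCPL, so $(M*N,\preceq)$ has \emph{no} infinite strictly descending chain. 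It therefore suffices to force failure of the well-quasi-order condition by exhibiting an infinite antichain of reduced words under left divisibility, whenever we are in neither exceptional case.

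The substantive work is this antichain construction. Assuming without loss of generality that $M$ is not a group, I would fix a non-left-invertible $a\in M\setminus\{1\}$ (which exists, since a monoid all of whose elements are left invertible is a group) and a letter $b\in N\setminus\{1\}$, and build an antichain from alternating words such as $a'(ba)^k$, using the non-left-invertibility of $a$ together with a second independent non-identity letter supplied by the hypothesis $|M|\ge 3$ or $|N|\ge 3$. Verifying incomparability reduces, via the suffix-with-boundary-merging description of left division in a free product (Lemma~\ref{product reduction} and Remark~\ref{cor:concatenate}), to the failure of a divisibility relation of the form $a\in Ma'$ inside a vertex monoid, which the non-invertibility and size hypotheses guarantee. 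The hard part is precisely this step: because left division merges boundary syllables, ``being a left multiple'' is not simply ``being a suffix'', so the words must be chosen so that no two are related in \emph{either} direction, uniformly across the sub-cases (both non-groups; one non-group with the other enlarged; finite versus infinite factors). The case $|M|=|N|=2$ is exactly the boundary at which every such attempted antichain collapses into a chain, which is why it must be isolated and handled separately.
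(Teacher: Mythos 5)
This lemma is not proved in the paper at all: it is quoted verbatim from \cite{miller:2021}, so your proposal can only be measured against Miller's argument and the paper's analogous graph-product computations. Your wqo reformulation (weak left noetherianity of $S$ is equivalent to the left-divisibility quasi-order $s\preceq t \Leftrightarrow t\in Ss$ being a well-quasi-order, since left ideals are exactly the up-sets) is correct and is in fact the standard device; with it, your part (ii) is complete and correct: divisibility on $M\times N$ is the product order, Dickson's lemma gives sufficiency, and retracts give necessity. In part (i) the group case and the deferral of the $|M|=|N|=2$ case to a direct check are fine (the only case needing real work is $\mathbb{Z}_2$ free with the two-element semilattice; the other two are a group and a union of two chains respectively). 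Note also that your ACCPL detour via Theorem~\ref{acc} is unnecessary, though harmless: an infinite antichain $\{w_k\}$ alone already shows $\bigcup_k Sw_k$ is not finitely generated, since any finite generating set lies above finitely many $w_k$ only.

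The genuine gap is in your necessity argument for (i), at the step you yourself flag as hard. You claim incomparability of the words $a'(ba)^k$ reduces to the failure of a relation $a\in Ma'$ in a vertex monoid, ``which the non-invertibility and size hypotheses guarantee.'' This is false. Take $M=G\cup\{0\}$, a nontrivial group $G$ with a zero adjoined, so $|M|\geq 3$ and $M$ is not a group; the only non-left-invertible element is $a=0$, and then $a=0\cdot a'\in Ma'$ for \emph{every} $a'$. Concretely, with $j<k$ one has $a'(ba)^k=\bigl(a'(ba)^{k-j-1}b\,0\bigr)\cdot a'(ba)^j$, so your family collapses into a chain and witnesses nothing. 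The repair is to force incomparability \emph{positionally} rather than through non-divisibility in $M$: for instance take $a\in M$ non-left-invertible, $a'\in M\setminus\{1,a\}$, $b\in N\setminus\{1\}$, and set $w_n=a\circ b\circ a'\circ b\circ (a\circ b)^n\circ a$ (or, when $|M|=2$ and $|N|\geq 3$, $w_n=a\circ c\circ (a\circ b)^n\circ a$ with $b\neq c$ in $N\setminus\{1\}$). Since $xa\neq 1$ for all $x\in M$, multiplying $w_j$ on the left can only merge, never cancel, its first syllable; hence any left multiple of $w_j$ retains the suffix $b\circ a'\circ b\circ (a\circ b)^j\circ a$, and the unique occurrence of the marker $a'$ (respectively $c$) pins down $j$, while length considerations rule out the reverse comparability. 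This positional mechanism is exactly what the paper uses at the corresponding point of its own graph-product argument, Step 2 of the proof of Proposition~\ref{key15}, where the contradiction is extracted from the support of the first block of a right Foata normal form rather than from any divisibility relation inside a vertex monoid. With that substitution your outline becomes a correct proof; as written, the stated reduction would fail on the group-with-zero example.
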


In our work there is no assumption that the graph $\Gamma$ is finite. We therefore need an infinitary version of Lemma~\ref{key13}(ii). It is helpful to recall that a monoid in which every element is left invertible is a group.

\begin{Prop}\label{prop:reducedsimple} Let $\mathcal{M}=\{ M_i:i\in I\}$ be a collection of an indexed set of monoids and let
$P$ be the restricted direct product of the monoids in  $\mathcal{M}$. Then $P$ is weakly left noetherian if and only if each $M_i$ is weakly left noetherian, and
$J:=\{ i\in I:M_i\mbox{ is not a group}\}$ is finite.\end{Prop}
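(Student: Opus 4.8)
The plan is to prove the two directions separately, using Lemma~\ref{key13}(ii) as the engine for the finite case and a direct chain construction to capture the infinitary obstruction.

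For the forward direction I would first dispose of the easy claim that each $M_i$ is weakly left noetherian: the coordinate projection $\pi_i\colon P\to M_i$ is a surjective homomorphism, so each $M_i$ is a morphic image of $P$, and weak left noetherianity passes to morphic images by \cite[Lemma 4.1]{miller:2021}. The substantive point is that $J$ must be finite, and I would argue this contrapositively. Assuming $J$ infinite, fix distinct $i_1,i_2,\dots\in J$ and, for each $n$, an element $a_n\in M_{i_n}$ that is not left invertible; such an $a_n$ exists because $M_{i_n}$ is not a group and a monoid in which every element is left invertible is a group. Let $c_n\in P$ be the tuple with entry $a_n$ in position $i_n$ and identity elsewhere, and set $L_k=\bigcup_{n=1}^{k}Pc_n$, the left ideal generated by $c_1,\dots,c_k$. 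The key computation is that $c_{k+1}\notin L_k$: if $c_{k+1}=pc_n$ for some $n\le k$ and $p\in P$, then comparing the $i_n$-coordinate (noting $i_n\neq i_{k+1}$, so that this coordinate of $c_{k+1}$ is the identity) gives $1=p_{i_n}a_n$, forcing $a_n$ to be left invertible, a contradiction. Since plainly $c_{k+1}\in L_{k+1}$, the chain $L_1\subsetneq L_2\subsetneq\cdots$ is strictly ascending, so $P$ is not weakly left noetherian.

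For the converse, suppose each $M_i$ is weakly left noetherian and $J$ is finite. I would split the index set as $I=J\sqcup(I\setminus J)$ and record the isomorphism $P\cong K\times G$, where $K=\prod_{i\in J}M_i$ is a finite direct product and $G$ is the restricted direct product of the groups $M_i$, $i\notin J$. Since a restricted direct product of groups is again a group (inverses preserve finite support), $G$ is a group, and every group is weakly left noetherian because its only left ideals are $\emptyset$ and $G$. The factor $K$ is weakly left noetherian by induction on $|J|$ via Lemma~\ref{key13}(ii), the factors $M_i$ ($i\in J$) being non-groups and hence non-trivial. A final application of Lemma~\ref{key13}(ii) to the two weakly left noetherian factors $K$ and $G$ gives that $P$ is weakly left noetherian; the degenerate cases in which $K$ or $G$ is trivial are handled by observing that $P$ is then isomorphic to the remaining factor.

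The main obstacle I anticipate is the forward direction, specifically producing the strictly ascending chain and verifying its strictness cleanly. The crux is that left invertibility in $P$ is detected coordinatewise, so that a non-left-invertible element in infinitely many distinct coordinates can be leveraged to separate the successive ideals $L_k$; once the coordinate comparison yielding $1=p_{i_n}a_n$ is in place, the contradiction is immediate. By contrast, the sufficiency direction should be routine given Lemma~\ref{key13}(ii) together with the elementary fact that groups are weakly left noetherian.
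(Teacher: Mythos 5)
Your proof is correct and takes essentially the same route as the paper: the same coordinate-comparison argument (a non-left-invertible entry $a_n$ yielding $1=p_{i_n}a_n$) forces $J$ finite, with the paper phrasing it via finite generation of a single infinite ideal rather than your strictly ascending chain, and the same decomposition $P\cong K\times G$ combined with Lemma~\ref{key13}(ii) and the fact that a restricted direct product of groups is a group for the converse. If anything, your write-up is slightly more careful than the paper's about the non-triviality hypotheses needed to apply Lemma~\ref{key13}(ii) and the degenerate cases where $K$ or $G$ is trivial.
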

\begin{proof} If $P$ is weakly left noetherian, then so is each $M_i$, by Corollary~\ref{cor:retract3}. Suppose that 
$J$ is infinite. For each $j\in J$ we pick an element $a_j\in M_j$ such that $a_j$ is not left invertible and define
$f_j\in P$ by $jf_j=a_j$ and $if_j=I_{M_i}$ for all $i\neq j$. Clearly $F=\{ f_j:j\in J\}\subseteq P$. 
Let $I$ be the left ideal of $P$ generated by $F$. Since $P$ is weakly left noetherian, $I$ is generated by a finite subset $F'$ of $F$.
Pick $f_k\notin F'$; by assumption, $f_k=g f_j$ for some $g\in P$ and $f_j\in F'$, and so in particular, $k\neq j$. Then
\[I_{M_j}=jf_k=j(g f_j)=(jg)(jf_j)=(jg)a_j,\]
contradicting the fact that $a_j$ is not left invertible. Thus $J$ is finite.

For the converse, suppose that each $M_i$ is weakly left noetherian, and
$J:=\{ i\in I:M_i\mbox{ is not a group}\}$ is finite. Then $P\cong P_J\times P_G$, where $P_J$ is the direct product of the monoids in $\mathcal{M}_J:=\{ M_j:j\in J\}$ and
$P_G$ is the restricted direct product of the groups in   $\mathcal{M}_G:=\{ M_j: j\in I\setminus J\}$. Certainly  $P_G$ is a group, and so is  weakly left noetherian. The result then follows from Lemma~\ref{key13}. \end{proof}

To identify which graph products are weakly left noetherian we introduce the notion of a graph   $\Gamma = (V, E)$ being  relatively complete with respect to a set of vertex monoids $\mathcal{M}$, as follows.

\begin{defn}\label{d10}
A  graph $\Gamma = (V, E)$ is \emph{relatively complete with respect to $\mathcal{M} = \{M_\alpha : \alpha \in V\}$} if for any pair $(\alpha, \beta) \notin E$ with $\alpha \neq \beta$, one of the following holds:
\begin{enumerate}
    \item[(i)] $|M_\alpha| = |M_\beta| = 2$, either $M_\alpha$ or $M_\beta$ is not a group, and $(\alpha, \gamma), (\beta, \gamma) \in E$ for all $\gamma \in V \setminus \{\alpha, \beta\}$;

    \item[(ii)] both $M_\alpha$ and $M_\beta$ are groups.
\end{enumerate}
\noindent  Moreover, there are only finitely many pairs $(\alpha, \beta) \notin E$ satisfying Condition (i).
\end{defn}

  Note  that if $(\alpha,\beta)$ and $(\gamma,\delta)$ are pairs of vertices satisfying Condition (i) of 
Definition~\ref{d10}, then $\alpha,\beta, \gamma$ and $\delta$ are all distinct.

Before proceeding with our arguments,   we make two straightforward observations. 
\begin{Rem}\label{pre1}
\begin{enumerate} 

\item[(i)] Suppose that $\Gamma' = (V', E')$ is an induced subgraph of $\Gamma = (V, E)$, and let $\mathscr{GP}' = \mathscr{GP}(\mathcal{M}', \Gamma')$ be the graph product of $\mathcal{M}' = \{M_\alpha : \alpha \in V'\}$ with respect to $\Gamma'$. We denote an  
element of $\mathscr{GP}'$ using the notation $\lfloor \cdot \rfloor$. Using  Remark~\ref{rem:freeprod},  for any  words $x, y, z \in X^*$ consisting of letters only from $\mathcal{M}'$, we have that $\lfloor x \rfloor = \lfloor y \rfloor$ in $\mathscr{GP}'$ if and only if $[x] = [y]$ in $\mathscr{GP}$; furthermore, $\lfloor x \rfloor \lfloor y \rfloor = \lfloor z \rfloor$ in $\mathscr{GP}'$ if and only if $[x][y] = [z]$ in $\mathscr{GP}$.

\item[(ii)] Suppose that $x,y \in X^*$ are shuffle equivalent. Let $x'$ be a subword of $x$ which shuffles (within $x$) to the subword $y'$ of $y$. Then $[x']=[y']$. Consequently, if
$x^*$ and $y^*$ are the subwords of $x$ and $y$ obtained by deleting $x'$ and $y'$ respectively, then also $[x^*]=[y^*]$.
\end{enumerate}
\end{Rem}

 Lemma \ref{decomp} will be crucial for our discussion. It  enables us to decompose $\mathscr{GP} = \mathscr{GP}(\Gamma,\mathcal{M})$, where $\Gamma$ is relatively complete with respect to $\mathcal{M}$, into a direct product of two or three monoids, each of which is weakly left noetherian,   as  in Proposition \ref{direct4}. 

\begin{Lem}\label{decomp}
Suppose $V$ is a disjoint union of $V_1$ and $V_2$ such that $(\alpha, \beta)\in E$ for all $\alpha\in V_1$ and all $\beta \in V_2$.  For $i=1,2$, let $\mathscr{GP}_i$ be the graph product of monoids $\mathcal{M}_i=\{M_\alpha: \alpha\in V_i\}$ with respect to the induced subgraph $\Gamma_i=(V_i, E_i)$ of $\Gamma$.Then $$\mathscr{GP}\cong \mathscr{GP}_1\times \mathscr{GP}_2.$$
\end{Lem}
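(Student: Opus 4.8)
The plan is to construct mutually inverse monoid homomorphisms $\phi\colon \mathscr{GP}\to \mathscr{GP}_1\times\mathscr{GP}_2$ and $\psi\colon \mathscr{GP}_1\times\mathscr{GP}_2\to\mathscr{GP}$. Write $X_i=\bigcup_{\alpha\in V_i}M_\alpha$, so that $X=X_1\sqcup X_2$, and for $w\in X_i^*$ let $\lfloor w\rfloor_i$ denote its class in $\mathscr{GP}_i$. I would define $\phi$ on the generating set $X$ of the presentation $\langle X\mid R\rangle$ of $\mathscr{GP}$ by
\[
\phi(x)=\begin{cases}(\lfloor x\rfloor_1,1)&\text{if }\supp(x)\in V_1,\\(1,\lfloor x\rfloor_2)&\text{if }\supp(x)\in V_2,\end{cases}
\]
and verify that it respects each family of relations in $R=R_{\textsf{id}}\cup R_{\textsf{v}}\cup R_{\textsf{e}}$. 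The relations in $R_{\textsf{id}}$ and $R_{\textsf{v}}$, and those in $R_{\textsf{e}}$ joining two vertices of the same $V_i$, are checked in a single coordinate and hold because they hold in the factor $\mathscr{GP}_i$. The only new case is a relation $x\circ y=y\circ x$ of $R_{\textsf{e}}$ with $\supp(x)=\alpha\in V_1$ and $\supp(y)=\beta\in V_2$; here $\phi(x\circ y)=(\lfloor x\rfloor_1,\lfloor y\rfloor_2)=\phi(y\circ x)$ holds automatically, since the two letters act in different factors of the direct product. (These cross relations lie in $R$ precisely because the hypothesis guarantees $(\alpha,\beta)\in E$.) Thus $\phi$ extends to a homomorphism.

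For $\psi$, I would use the natural homomorphisms $\iota_i\colon\mathscr{GP}_i\to\mathscr{GP}$ induced by the inclusions $X_i\subseteq X$; these are well defined because the defining relations of $\mathscr{GP}_i$ form a subset of $R$, and indeed are embeddings by Remark~\ref{rem:freeprod}. The hypothesis that every vertex of $V_1$ is adjacent to every vertex of $V_2$ ensures that every letter of $X_1$ shuffles with every letter of $X_2$, so that $\iota_1(\mathscr{GP}_1)$ and $\iota_2(\mathscr{GP}_2)$ commute elementwise in $\mathscr{GP}$. This is exactly the condition needed for $(g_1,g_2)\mapsto \iota_1(g_1)\iota_2(g_2)$ to define a homomorphism $\psi\colon\mathscr{GP}_1\times\mathscr{GP}_2\to\mathscr{GP}$.

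It then remains to check that $\phi$ and $\psi$ are mutually inverse. The composite $\phi\circ\psi$ is the identity since $\phi(\iota_1(g_1))=(g_1,1)$ and $\phi(\iota_2(g_2))=(1,g_2)$, which is immediate on generators and hence in general. For $\psi\circ\phi$, given $[w]$ with $w=x_1\circ\cdots\circ x_n\in X^*$, multiplying coordinatewise in the direct product yields $\phi([w])=(\lfloor w_1\rfloor_1,\lfloor w_2\rfloor_2)$, where $w_i$ is the subword of $w$ formed by the letters with support in $V_i$ (taken in order); consequently $\psi(\phi([w]))=[w_1][w_2]$. The one place the hypothesis is genuinely used is the identity $[w]=[w_1][w_2]$: since each letter of $X_1$ shuffles past each letter of $X_2$, repeated shuffles move all the $V_2$-letters to the right of all the $V_1$-letters without ever transposing two letters of the same $X_i$, so $w$ is shuffle equivalent to $w_1\circ w_2$ and hence $[w]=[w_1\circ w_2]=[w_1][w_2]$. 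I expect this sorting-by-shuffles step, rather than any of the routine relation checking, to be the crux of the argument; once it is in place, $\psi\circ\phi=\mathrm{id}$ follows and the isomorphism $\mathscr{GP}\cong\mathscr{GP}_1\times\mathscr{GP}_2$ is established.
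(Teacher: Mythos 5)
Your proof is correct, but it takes a genuinely different route from the paper's. You build the isomorphism from two homomorphisms obtained by universal properties: $\phi$ from the presentation $\langle X\mid R\rangle$ of $\mathscr{GP}$ by checking the relations $R_{\textsf{id}}$, $R_{\textsf{v}}$, $R_{\textsf{e}}$ coordinatewise, and $\psi$ from the induced maps $\iota_i\colon\mathscr{GP}_i\to\mathscr{GP}$ together with the elementwise commutation of their images; you then verify the two composites are identities, the only combinatorial input being the sorting step $[w]=[w_1\circ w_2]$, which, as you rightly identify, is exactly where the hypothesis $(\alpha,\beta)\in E$ for all $\alpha\in V_1$, $\beta\in V_2$ does its work. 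The paper instead defines a single map $[x]\mapsto\bigl(\lfloor x'\rfloor,\lceil x^*\rceil\bigr)$ directly on reduced words, where $x$ shuffles to $x'\circ x^*$ with $\supp(x')\subseteq V_1$ and $\supp(x^*)\subseteq V_2$, and must then prove well-definedness and injectivity by hand, appealing to the uniqueness of shuffle classes of reduced words (Theorem~\ref{shuffle} and Remark~\ref{pre1}), before verifying the homomorphism property by exhibiting a reduced form of a product. Your presentation-theoretic argument buys economy: well-definedness is automatic, bijectivity comes for free from the two-sided inverse, and no reduced-word machinery is needed beyond the elementary observation that cross-letters shuffle; it also generalizes verbatim to any presented monoid whose generators and relations split in this way. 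The paper's normal-form argument buys an explicit description of the isomorphism on reduced words (the coordinates are the classes of the two support-sorted subwords), which matches the toolkit used throughout the rest of the paper. One cosmetic remark: the fact that the $\iota_i$ are embeddings, which you cite from Remark~\ref{rem:freeprod}, is true but never needed --- $\psi$ only requires the $\iota_i$ to be homomorphisms with commuting images, and injectivity of everything follows once the composites are shown to be identities.
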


\begin{proof}
Let $x = x_1 \circ \cdots \circ x_n \in X^*$ be reduced. Since $(\alpha, \beta) \in E$ for all $\alpha \in V_1$ and $\beta \in V_2$, we may shuffle $x$ to $x'\circ x^*$, so that  $[x] = [x'][x^*]$, where $\supp(x')\subseteq V_1$ and $\supp(x^*)\subseteq V_2$.
Suppose now $y \in X^*$ is reduced and shuffle equivalent to $x$. Then $[y] = [y'][y^*]$, where $y'$ and $y^*$ are defined analogously to $x'$ and $x^*$ for $x$. From Remark~\ref{pre1} we have  that $x'$ is shuffle equivalent to $y'$ and $x^*$ is shuffle equivalent to $y^*$, implying    $$[x'] = [y']\mbox{~and~}[x^*] = [y^*].$$ Let $\lfloor \cdot \rfloor$ and $\lceil \cdot \rceil$ denote  equivalence classes of words in $\mathscr{GP}_1$ and $\mathscr{GP}_2$, respectively. Then $\lfloor x' \rfloor=\lfloor y' \rfloor$ and $\lceil x^* \rceil=\lceil y^* \rceil$, again by Remark \ref{pre1}.  

We now define a map
\[
\psi: \mathscr{GP} \longrightarrow \mathscr{GP}_1 \times \mathscr{GP}_2, \quad [x] \mapsto \big(\lfloor x' \rfloor, \lceil x^* \rceil\big),
\]
where $x \in X^*$ is reduced. It follows from  above arguments and observation that $\psi$ is well-defined and injective. Clearly, $\psi$ is onto.

 We now prove $\psi$ is a homomorphism. Let $a, b \in X^*$ be reduced. Then $[a] = [a'][a^*]$ and $[b] = [b'][b^*]$, where $a'$ and $a^*$, $b'$ and $b^*$ are defined analogously to $x'$ and $x^*$ for $x$. Then, since $(\alpha, \beta) \in E$ for all $\alpha \in V_1$ and $\beta \in V_2$, we deduce
\[
[a][b] = [a'][a^*][b'][b^*] = [a'][b'][a^*][b^*] =[a'\circ b'][a^*\circ b^*].
\]
Let $p$ and $q$ be reduced forms of $a' \circ b'$ and $a^* \circ b^*$, respectively. Since $\supp(p) \subseteq V_1$ and $\supp(q) \subseteq V_2$, we have $\supp(p) \cap \supp(q) = \emptyset$, which implies that $p \circ q$ is a reduced form of $(a' \circ b') \circ (a^* \circ b^*)$, and hence of $a \circ b$. Therefore,
\[
([a][b])\psi = ([a \circ b])\psi = ([p \circ q])\psi = \big(\lfloor p \rfloor, \lceil q \rceil\big) = \big(\lfloor a' \circ b' \rfloor, \lceil a^* \circ b^* \rceil\big).
\]
Since $\lfloor a' \circ b' \rfloor = \lfloor a' \rfloor \lfloor b' \rfloor$ and $\lceil a^* \circ b^* \rceil = \lceil a^* \rceil \lceil b^* \rceil$, it follows that
\[
([a][b])\psi = \big(\lfloor a' \rfloor, \lceil a^* \rceil\big)\big(\lfloor b' \rfloor, \lceil b^* \rceil\big) = [a]\psi[b]\psi.
\]
Therefore, $\psi$ is a homomorphism and, consequently, an isomorphism.\end{proof}

\begin{Prop}\label{direct4}
Let $\Gamma=(V,E)$ be  relatively complete  with respect to $\mathcal{M}$.   Suppose that  there are precisely $n$ pairs of vertices 
$(\alpha_i, \beta_i)\notin E$ satisfying Condition {\rm(i)} of Definition \ref{d10}, where $1\leq i\leq n$ and $n\in \mathbb{N}^0$.
 Let  $V_1=\{\alpha_i, \beta_i: 1\leq i\leq n\}$, $$V_2=\{\alpha\in V: M_\alpha \mbox{~is~not~a~group}\}\backslash V_1 \quad \mbox{and} \quad V_3=V\backslash (V_1\cup V_2).$$

For $i=1,2,3$ let $\mathscr{GP}_i$ be the graph product of $\mathcal{M}_i=\{M_\alpha: \alpha\in V_i\}$ with respect to the induced subgraph $\Gamma_i=(V_i, E_i)$. Then $$\mathscr{GP}\cong \mathscr{GP}_1\times \mathscr{GP}_2\times \mathscr{GP}_3.$$ 
Further, $\mathscr{GP}_1$ is the direct product of the free products  $M_{\alpha_i}\ast M_{\beta_i}$ of the monoids $M_{\alpha_i}$ and $M_{\beta_i}$  where $1\leq i\leq n$,  $\mathscr{GP}_2$ is the restricted direct product of monoids in $\mathcal{M}_2$, and $\mathscr{GP}_3$ is the graph product of groups in $\mathcal{M}_3$. 
 Note that if $n=0$ then  the factor $\mathscr{GP}_1$ is trivial and so \[\mathscr{GP}\cong \mathscr{GP}_2\times \mathscr{GP}_3.\]

\end{Prop}

\begin{proof}
 If  $n>0$, that is, $V_1\neq \emptyset$, let  $V'=V\backslash V_1$. By the assumed condition we have $(\alpha, \beta)\in E$ for all $\alpha\in V_1$ and $\beta\in V'$, so that  by Lemma \ref{decomp} $$\mathscr{GP}\cong \mathscr{GP}_1\times \mathscr{GP}',$$ where $\mathscr{GP}'=\mathscr{GP}(\mathcal{M}', \Gamma')$ is the graph product of monoids $\mathcal{M}'=\{M_\alpha: \alpha\in V'\}$ with respect to the induced subgraph $\Gamma'=(V', E')$ of $\Gamma$. 

  By construction, $V’$ is the disjoint union of $V_2$ and $V_3$. It follows from the definition of a relatively complete graph that $(\alpha, \beta)\in E$  for all  distinct $\alpha,\beta\in V_2$ and for all $\alpha\in V_2$ and $\beta\in V_3$, giving $$\mathscr{GP}'\cong \mathscr{GP}_2\times \mathscr{GP}_3$$ by Lemma \ref{decomp}, and hence  $$\mathscr{GP}\cong \mathscr{GP}_1\times \mathscr{GP}_2\times \mathscr{GP}_3,$$
with the first factor trivial in the case $V_1=\emptyset$.

  It follows from the comment after Definition~\ref{d10} that the vertices $\alpha_1,\beta_1, \cdots, \alpha_n, \beta_n$ are all distinct. Moreover, from that definition,
$(\alpha_i,\gamma), (\beta_i,\gamma)\in E$ for all $1\leq i\leq n$ and all
$\gamma\in \{ \alpha_j,\beta_j:1\leq j\leq n, i\neq j\}$. It follows from repeated applications of Lemma~\ref{decomp}  that $$\mathscr{GP}_1\cong \mathscr{GP}_{11} \times \cdots \times \mathscr{GP}_{1n}$$ where   $\mathscr{GP}_{1i}$ is the 
free product  $M_{\alpha_i}\ast M_{\beta_i}$ for all $1\leq i\leq n$.

As $\Gamma_2$ is complete,  we have that  $\mathscr{GP}_2$ is the restricted direct product of monoids in $\mathcal{M}_2$. Clearly, all vertex monoids in $M_3$ are groups, and thus $\mathscr{GP}_3$ is the graph product of groups in $\mathcal{M}_3$.
\end{proof}

\begin{Prop}\label{key15}
If the graph product $\mathscr{GP}=\mathscr{G}(\Gamma,\mathcal{M})$ is weakly left noetherian, then $\Gamma$ is relatively complete with respect to 
$\mathcal{M}$. 
\end{Prop}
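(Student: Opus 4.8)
\textbf{Proof proposal for Proposition~\ref{key15}.}

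The plan is to prove the contrapositive: if $\Gamma$ is \emph{not} relatively complete with respect to $\mathcal{M}$, then $\mathscr{GP}$ is not weakly left noetherian. Since $\mathscr{GP}$ being weakly left noetherian forces each vertex monoid to be weakly left noetherian by Corollary~\ref{cor:retract3}, and since each vertex monoid is a retract of $\mathscr{GP}$, the real work is to locate, in a non--relatively--complete graph, an induced subgraph that is itself the site of a non--finitely--generated left ideal. The strategy throughout is to exhibit an induced subgraph $\Gamma'$ on a vertex set $V'$ whose graph product $\mathscr{GP}'$ fails to be weakly left noetherian; then, because $\mathscr{GP}'$ is a retract of $\mathscr{GP}$ (Remark~\ref{rem:freeprod}) and the class of weakly left noetherian monoids is closed under retract (Corollary~\ref{cor:retract3}), it will follow that $\mathscr{GP}$ is not weakly left noetherian either.

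The argument splits according to how Definition~\ref{d10} can fail. First I would unpack the definition: it fails precisely when there exists a non-edge $(\alpha,\beta)\notin E$, $\alpha\neq\beta$, for which \emph{neither} clause (i) nor clause (ii) holds; \emph{or} when there are two distinct non-edges both satisfying clause (i). In the first case, the offending pair $\{\alpha,\beta\}$ has $M_\alpha,M_\beta$ not both groups, and fails (i), so either one of $|M_\alpha|,|M_\beta|$ exceeds $2$, or there is some vertex $\gamma$ with $(\alpha,\gamma)\notin E$ or $(\beta,\gamma)\notin E$. I would treat the sub-case where the induced subgraph on $\{\alpha,\beta\}$ has no edge and $M_\alpha,M_\beta$ are not both groups with $|M_\alpha|=|M_\beta|=2$: here $\mathscr{GP}(\Gamma',\mathcal{M}')$ on $V'=\{\alpha,\beta\}$ is the free product $M_\alpha * M_\beta$, and by Lemma~\ref{key13}(i) this free product is not weakly left noetherian, since its vertices do not meet the stated size/group criterion. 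The remaining sub-case is when $|M_\alpha|=|M_\beta|=2$ and (say) $M_\alpha$ is not a group, but there is a third vertex $\gamma$ non-adjacent to $\alpha$ or $\beta$; I would then take $V'=\{\alpha,\beta,\gamma\}$ (or a suitable three-element induced subgraph) and argue directly that this small graph product contains a non-finitely-generated left ideal, using the explicit construction of Proposition~\ref{prop:reducedsimple} adapted to a free-product component.

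The second branch of the failure is the existence of two distinct non-edges each satisfying clause (i). Here I would pick representatives of both pairs and consider the induced subgraph on their union. The point is that clause (i) forces each such pair to generate a free-product of two small monoids, at least one non-group; having two independent such free-product configurations inside $\mathscr{GP}$ should let me build an infinite strictly ascending chain of left ideals, again by producing, as in the proof of Proposition~\ref{prop:reducedsimple}, an infinite family $\{f_j\}$ of generators no finite subset of which generates the whole left ideal, the obstruction coming from the non--left--invertible letters supplied by the non-group factors. The main obstacle I anticipate is the careful bookkeeping in this last case: I must verify that the reduced-word structure of the graph product genuinely prevents the candidate generators from being absorbed into one another, i.e.\ that the left-divisibility relations among the $[f_j]$ really do force infinitely many generators. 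This is where Lemma~\ref{key7} and Lemma~\ref{left standard}, which characterise inclusion of principal left ideals via support and the vertex-monoid divisibility $M_{\mathbf{s}(a)}a\subseteq M_{\mathbf{s}(b)}b$, will do the decisive work, letting me reduce the combinatorial claim to the elementary non--left--invertibility already exploited in Proposition~\ref{prop:reducedsimple}.
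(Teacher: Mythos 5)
Your first branch is essentially sound. The two-vertex sub-case (pass to the retract on $\{\alpha,\beta\}$, identify it as the free product $M_\alpha * M_\beta$, and apply Lemma~\ref{key13}(i)) is exactly the paper's Step~1. Your three-vertex sub-case is also completable, though not with the tool you name: the generator family in Proposition~\ref{prop:reducedsimple} lives in a restricted direct product and needs infinitely many non-group coordinates, so it does not ``adapt to a free-product component''. What does work on $V'=\{\alpha,\beta,\gamma\}$ is to note that one of $\alpha,\beta$ is non-adjacent to \emph{both} other vertices of $V'$, so $\mathscr{GP}'$ splits (by the free-product analogue of Lemma~\ref{decomp}, immediate from the presentation) as a free product of a two-element monoid with a monoid of order at least four, and at least one factor is not a group because it retracts onto a non-group vertex monoid; Lemma~\ref{key13}(i) then denies weak left noetherianity. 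The paper proceeds differently here: it stays inside $\mathscr{GP}$ and shows the left ideal generated by the reduced words $a\circ b\circ c\circ(a\circ b)^n$ is not finitely generated, using Lemma~\ref{standard multi}, Lemma~\ref{block} and right Foata normal forms. Your route, once repaired as above, is arguably cleaner for this step.

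The genuine gap is your second branch, and it is structural, not bookkeeping. If both $\{\alpha,\beta\}$ and $\{\mu,\delta\}$ satisfy clause (i) of Definition~\ref{d10}, then clause (i) itself forces every cross edge between the two pairs, so by Lemma~\ref{decomp} the retract on these four vertices is $(M_\alpha * M_\beta)\times(M_\mu * M_\delta)$, a direct product of two free products of two-element monoids. By Lemma~\ref{key13}(i) each factor is weakly left noetherian, and by Lemma~\ref{key13}(ii) so is their direct product. Consequently the infinite family $\{f_j\}$ you propose to build in this configuration cannot exist: every candidate chain stabilises because the commutation forced by clause (i) lets the generators absorb one another, and Lemmas~\ref{key7} and \ref{left standard} cannot change that. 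You should also know that the paper's own Step~3 is delicate at precisely this point: it works with the left ideal generated by the words $a\circ c\circ b\circ a\circ(b\circ d\circ c)^n$ and asserts they are reduced, but clause (i) makes $b$ commute with both $c$ and $d$, so for $n\geq 2$ the consecutive letters from $M_\beta$ shuffle together and merge; for instance, with all four vertex monoids the two-element semilattice, that ideal is already generated by $[a\circ c\circ b\circ a]$ and $[a\circ c\circ b\circ a\circ b\circ d\circ c]$. So in the two-pair configuration the conclusion you are trying to force is in direct tension with Lemma~\ref{key13} as quoted, and neither your proposal nor a routine repair of it closes this case; this is where any proof of Proposition~\ref{key15} in its present form must be re-examined rather than merely tightened.
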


\begin{proof}
Suppose there exists a pair $(\alpha, \beta) \notin E$ with $\alpha \neq \beta$ and either $M_\alpha$ or $M_\beta$ is not a group. Without loss of generality, assume  $M_\alpha$ is not a group. We now claim condition \rm(i) of Definition \ref{d10} holds.

{\bf Step 1} We first prove $|M_\alpha| = |M_\beta| = 2.$
Let $\Gamma'=(V', E')$ be the induced subgraph of $\Gamma=(V, E)$ where $V'=\{\alpha, \beta\}$ and let
$\mathcal{M}'=\{ M_\alpha,M_\beta\}$.  By Remark~\ref{rem:freeprod} we have that
$\mathscr{GP}'=\mathscr{GP}(\Gamma',\mathcal{M}')$ is a retract of $\mathscr{GP}$ and so by \cite[Lemma 4.1]{miller:2021} we have
that $\mathscr{GP}'$ is weakly left noetherian. Since $(\alpha,\beta)\notin E$ we have that $\mathscr{GP}'$ is the free
product of $M_{\alpha}$ and $M_{\beta}$.  Lemma~\ref{key13} gives that  $|M_\alpha| = |M_\beta| = 2$.

{\bf Step 2} 
 Let $\gamma\in   V \setminus \{\alpha, \beta\}.$ We now establish $(\alpha, \gamma), (\beta, \gamma) \in E$. 
  Note $M_\alpha$ is not a group by assumption. Let $a \in M_\alpha$   be such that $a$ is not left invertible and choose non-identity $b \in M_\beta$, and $c \in M_\gamma$. Let $K$ be a left ideal of $\mathscr{GP}$ generated by the set $$A = \{[a][b][c]([a][b])^n : n \in \mathbb{N}\}.$$

Suppose, for contradiction, that $(\alpha, \gamma) \notin E$.
Since $(\alpha,\beta),(\alpha, \gamma)\notin E$, it follows that $a \circ b \circ c \circ (a \circ b)^n$ is reduced for all $n\in \mathbb{N}$. As $\mathscr{GP}$ is weakly left noetherian, there exists a finite subset $A'$ of $A$ generating $T$. Let $k$ be the maximum such that $[a][b][c]([a][b])^k \in A'$. Then, there exists some reduced word $v \in X^*$ and $1 \leq m \leq k$ such that
\[
[a][b][c]([a][b])^{k+1} = [v][a][b][c]([a][b])^m.
\]
Again since $(\alpha, \beta), (\alpha, \gamma)\notin E$, we deduce the first block of  $a \circ b \circ c \circ (a \circ b)^m$
in left Foata normal form  is $a$. Let $z$ be a reduced form of $v \circ a$. Since $a$ is not left invertible,
Lemma~\ref{standard multi} gives that  $z \circ b \circ c \circ (a \circ b)^m$ is a reduced form of $v \circ a \circ b \circ c \circ (a \circ b)^m$.  Therefore,
\[
[a][b][c]([a][b])^{k+1} = [z][b][c]([a][b])^m.
\]

Since  both sides of the above equality are in reduced form, we have $$[a][b][c]([a][b])^{k+1-m} = [z][b][c]$$ by Lemma~\ref{block}. Since $(\alpha, \beta), (\alpha, \gamma) \notin E$, the first block of a right Foata normal form of $a \circ b \circ c \circ (a \circ b)^{k+1-m}$ is $b$.  However, on the other hand, $s(c)$ must be in the support of the first block of a right Foata normal form of $z \circ b \circ c$, and thus in that of $a \circ b \circ c \circ (a \circ b)^{k+1-m}$, a contradiction, as $\gamma=\supp(c) \neq \supp(b)=\beta$. Therefore, $(\alpha, \gamma) \in E$ for all $\gamma \in V \setminus \{\alpha, \beta\}$.  A very similar argument, using the same $K$, shows that $(\beta, \gamma) \in E$ for all $\gamma \in V \setminus \{\alpha, \beta\}$. 

  To complete the proof
suppose for contradiction there are infinitely many pairs $(\alpha_i, \beta_i)\not \in E$ with $i\in I$,  $\alpha_i\neq \beta_i$ and either $M_{\alpha_i}$ or $M_{\beta_i}$ is not a group, satisfying conditions obtained in Steps 1 and 2. Without loss of generality, assume $M_{\alpha_i}$ is not a group for all $i\in I$. Then there exist $a_i\in M_{\alpha_i}$ such that $a_i$ is not left invertible for all $i\in I$.  It follows from Step 2 and the comment after Definition ~\ref{d10} that $\alpha_i\neq \alpha_j$ for all $i, j\in I$ with $i\neq j$. Lemma \ref{key4} now gives that  $\{\mathscr{GP}[a_i]: i\in I\}$ is an infinite anti-chain, contradicting the fact that  $\mathscr{GP}$ is weakly left noetherian. Thus the number of pairs satisfying conditions obtained in Steps 1 and 2 is finite.\end{proof}

\begin{Thm}\label{wln}
A graph product $\mathscr{GP}=\mathscr{GP}(\Gamma,\mathcal{M})$  is weakly left noetherian if and only if $\Gamma$ is relatively complete with respect to 
with respect to $\mathcal{M}$,  each vertex monoid is weakly left noetherian and only finitely many vertex monoids are not groups.
\end{Thm}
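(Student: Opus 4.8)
The plan is to assemble the theorem from the structural results already established, reducing everything to the behaviour of direct and free products under weak left noetherianity.

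For the forward implication I would suppose $\mathscr{GP}$ is weakly left noetherian and immediately extract two of the three conclusions: each vertex monoid is weakly left noetherian by Corollary~\ref{cor:retract3}, and $\Gamma$ is relatively complete with respect to $\mathcal{M}$ by Proposition~\ref{key15}. The remaining task is to show that only finitely many vertex monoids are not groups. Since relative completeness is now known, Proposition~\ref{direct4} applies and presents $\mathscr{GP}$ as a direct product having $\mathscr{GP}_2$ as a factor, where $\mathscr{GP}_2$ is the restricted direct product of the monoids in $\mathcal{M}_2 = \{M_\alpha : \alpha \in V_2\}$, every one of which is, by construction, not a group. As a direct factor, $\mathscr{GP}_2$ is a morphic image of $\mathscr{GP}$ and hence weakly left noetherian by \cite[Lemma 4.1]{miller:2021}. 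I would then feed $\mathscr{GP}_2$ into Proposition~\ref{prop:reducedsimple}: as the non-group factors of $\mathscr{GP}_2$ are precisely those indexed by $V_2$, that proposition forces $V_2$ to be finite. Because the non-group vertex monoids are exactly those indexed by $V_1 \cup V_2$ with $|V_1| \le 2$, only finitely many vertex monoids fail to be groups.

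For the converse I would start from the three hypotheses and apply Proposition~\ref{direct4} to obtain $\mathscr{GP} \cong \mathscr{GP}_1 \times \mathscr{GP}_2 \times \mathscr{GP}_3$ (dropping the first factor if no pair satisfies condition (i) of Definition~\ref{d10}). It then suffices to check that each factor is weakly left noetherian and to close under direct products using Lemma~\ref{key13}(ii). The factor $\mathscr{GP}_1$ is the free product of $M_\alpha$ and $M_\beta$ with $|M_\alpha| = |M_\beta| = 2$, so Lemma~\ref{key13}(i) applies. The factor $\mathscr{GP}_2$ is the restricted direct product of the monoids in $\mathcal{M}_2$, all of which lie among the finitely many non-group vertex monoids and are weakly left noetherian by hypothesis, so Proposition~\ref{prop:reducedsimple} gives that $\mathscr{GP}_2$ is weakly left noetherian. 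Finally, $\mathscr{GP}_3$ is the graph product of the groups in $\mathcal{M}_3$; since every generator is invertible it is itself a group, and as the whole of a group is its only left ideal, $\mathscr{GP}_3$ is weakly left noetherian without further work.

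The step I expect to be the main obstacle is the finiteness claim in the forward direction. The crucial point is that relative completeness, once secured from Proposition~\ref{key15}, lets Proposition~\ref{direct4} quarantine all but at most two of the non-group vertex monoids into the single restricted-direct-product factor $\mathscr{GP}_2$; Proposition~\ref{prop:reducedsimple} then transmutes the inherited weak left noetherianity of this factor into the finiteness of $V_2$. Everything else is routine assembly, the one point worth flagging being the automatic weak left noetherianity of $\mathscr{GP}_3$, which holds simply because a graph product of groups is a group.
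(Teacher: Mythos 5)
Your proposal is correct and follows essentially the same route as the paper's proof: both directions rest on Corollary~\ref{cor:retract3}, Proposition~\ref{key15}, the decomposition $\mathscr{GP} \cong \mathscr{GP}_1 \times \mathscr{GP}_2 \times \mathscr{GP}_3$ from Proposition~\ref{direct4}, Proposition~\ref{prop:reducedsimple} to extract (and, in your converse, to re-establish) the finiteness of the non-group factors, and Lemma~\ref{key13} to close under direct products, with $\mathscr{GP}_3$ handled exactly as in the paper by noting a graph product of groups is a group. The only cosmetic deviations are that you invoke morphic images where the paper speaks of retracts, and you use Proposition~\ref{prop:reducedsimple} for $\mathscr{GP}_2$ in the converse where the paper cites Lemma~\ref{key13} with $\mathcal{M}_2$ finite; neither changes the substance.
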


\begin{proof} If $\mathscr{GP}$ is weakly left noetherian then from Corollary~\ref{cor:retract3}, so is each vertex monoid. Furthermore, by Proposition~\ref{key15}, $\Gamma$ is relatively complete with respect to $\mathcal{M}$. From Proposition~\ref{direct4}
we have that \[
\mathscr{GP} \cong \mathscr{GP}_1 \times \mathscr{GP}_2 \times \mathscr{GP}_3,
\] where the factor $\mathscr{GP}_1$ may be empty. Here $ \mathscr{GP}_2 $ is the restricted direct product of the vertex monoids in $\mathcal{M}_2$. Since 
$ \mathscr{GP}_2 $ is a retract of $\mathscr{GP}$ and the monoids in $\mathcal{M}_2$ are not groups, we have from Proposition~\ref{prop:reducedsimple} that $\mathcal{M}_2$ is finite. 

Conversely, suppose that $\Gamma$ is relatively complete 
with respect to $\mathcal{M}$,  each vertex monoid is weakly left noetherian  and only finitely many vertex monoids are not groups. By Proposition~\ref{direct4},
\[
\mathscr{GP} \cong \mathscr{GP}_1 \times \mathscr{GP}_2 \times \mathscr{GP}_3,
\]
where (if it exists) the factor $\mathscr{GP}_1$  is  the direct product of the free products  $M_{\alpha_i}\ast M_{\beta_i}$ of monoids $M_{\alpha_i}$ and $M_{\beta_i}$ where  $\alpha_i, \beta_i\in V_1$ and $(\alpha_i,\beta_i)\notin E$,  $\mathscr{GP}_2$ is the restricted direct product of monoids in $\mathcal{M}_2$  and $\mathscr{GP}_3$ is the graph product of groups in $\mathcal{M}_3$. 
Lemma~\ref{key13} gives that $\mathscr{GP}_1$ is is weakly left noetherian and also
 $\mathscr{GP}_2$, as $\mathcal{M}_2$ is finite.  Finally, since $\mathscr{GP}_3$ is a group, it is again weakly left noetherian. Another application of Lemma~\ref{key13}  now gives that
$\mathscr{GP}$ is weakly left noetherian. 
\end{proof}

\begin{Rem}\label{rem:wln} We have used Lemma~\ref{key13} (concerning  free products of two monoids) and Proposition~\ref{prop:reducedsimple} (concerning restricted direct products) in the proof of Theorem~\ref{wln}, although their statements may  be read off from that of the theorem. It is also clear from both Lemma~\ref{key13} and Theorem~\ref{wln} that the free product of three or more non-trivial monoids cannot be weakly left noetherian unless they are all groups.\end{Rem}

\section{Reduction of products}\label{sec:red}

For the purposes of Sections~\ref{sec:leftidealH} and \ref{sec:L} we must now make a careful analysis of the reduction of a  product $s\circ a$ of two reduced words $s, a\in X^*$.  Our most general result, Proposition~\ref{prop:reduction}, is needed in full strength in Section~\ref{sec:L}, but a simpler process, as outlined in Corollary~\ref{cor:reduction}, suffices for Section~\ref{sec:leftidealH}.  In what follows  we refer to the 
steps of the process of reduction of a product
 $s\circ a$ that reduce its length as `reduction moves'.  A reduction move that consists of applying  a relation from $R_{\textsf{v}}$, or  first shuffling letters of $s$ and $a$, that is, in applying relations from $R_{\textsf{e}}$, and then applying  a relation from $R_{\textsf{v}}$, we refer to as a  `glueing move';  one that results in subsequently applying a relation from $R_{\textsf{id}}$ 
 we refer to as a `deletion move'.

Note that in the following, for convenience in applications, we do not begin by labelling the elements of $s$; as we reduce a product $s\circ a$ we {\em introduce} convenient labels.

\begin{Prop}\label{prop:reduction} Let $a=a_1\circ\dots\circ a_m\in X^*$ and $s\in X^*$ be reduced.
Suppose that  $s\circ  a$ has been subject to $M$ reduction moves. We have
$[1,M]=I \cup J$ where $I$ labels the $h$ glueing moves and $J$ labels the $k$ deletion moves.
We write $I=\{ i_\ell: 1\leq \ell\leq h\}$ where  $1\leq i_1<\dots <i_h\leq M$,  and
$J=\{ j_o: 1\leq o\leq k\}$ where $1\leq j_1<\dots <j_k\leq M$. If $h=0$, then $I=\emptyset$, if $k=0$ then $J=\emptyset$; notice that $M=h+k$ and 
$M$ is the greater of $i_h$ and $j_k$.

 Then there exists an injection $\theta:[1,M]\rightarrow [1,m]$ and letters $s_{p\theta}$  of $s$ with $\supp(s_{p\theta})=\supp(a_{p\theta})$ for $1 \leq p\leq M$ such that: 
\begin{enumerate}
\item $s$ shuffles to $s^M\circ  s_{i_1\theta}\circ \dots \circ s_{i_h\theta} \circ s_{j_k\theta}\circ \dots\circ s_{j_1\theta}$ where $s_{j_o\theta}a_{j_o\theta}=1$ for all $1\leq o\leq k$
and  $s^M$ is the word $s$ with the letters  $s_{ i\theta}$ deleted, $1\leq i\leq M$;
\item  $a$ shuffles to $a_{j_1\theta}\circ\cdots\circ a_{j_k\theta}\circ a_{i_h\theta}\circ\cdots\circ a_{i_1\theta}\circ a^M$,
where   $a^M$ is the word $a$ with the letters  $a_{ i\theta}$ deleted, $1\leq i\leq M$;
\item $s\circ a\equiv s^M\circ s_{i_1}a_{i_1\theta}\circ\dots\circ s_{i_h}a_{i_h\theta}\circ  a^M$;
\item $ s^M\circ s_{i_1\theta}a_{i_1\theta}\circ\dots\circ s_{i_h\theta}a_{i_h\theta}$ and $s_{i_1\theta}a_{i_1\theta}\circ\dots\circ s_{i_h\theta}a_{i_h\theta}\circ a^M$ are reduced;
\item $s_{i_1\theta}a_{i_1\theta}\circ\dots\circ s_{i_h\theta}a_{i_h\theta} $ is a complete block.
\end{enumerate}
\end{Prop}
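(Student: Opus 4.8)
The plan is to prove Proposition~\ref{prop:reduction} by induction on the number $M$ of reduction moves, using Lemma~\ref{product reduction} as the engine for each individual move. The base case $M=0$ is trivial: then $I=J=\emptyset$, the product $s\circ a$ is already reduced, $s^M=s$, $a^M=a$, and all five conclusions hold vacuously or by taking $\theta$ to be the empty map. For the inductive step, I would assume the statement holds after $M$ moves and analyse what happens when a single further reduction move (the $(M+1)$'th) is applied to the word obtained so far.

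The key structural observation driving the induction is that, after $M$ moves, conclusion~(3) expresses $s\circ a$ as equivalent to $s^M\circ s_{i_1\theta}a_{i_1\theta}\circ\dots\circ s_{i_h\theta}a_{i_h\theta}\circ a^M$, where by~(4) both the left portion ending in the glued block and the right portion beginning with it are reduced, and by~(5) the glued letters form a complete block. The only place a further length reduction can occur is at the interface between the tail $s^M$ (more precisely, some letter of $s$ not yet consumed) and the remaining reduced suffix $a^M$. So I would apply Lemma~\ref{product reduction} to identify the next letter of $s$ that glues with or annihilates a letter of $a^M$: it picks out an index in $[1,m]$, say $q$, not previously in the image of $\theta$, and I extend $\theta$ by sending $M+1$ to $q$. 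Depending on whether the new product $s_{q}a_{q}$ lies in $I$ (a glueing move, case~(a) of Lemma~\ref{product reduction}) or equals an identity (a deletion move, case~(b)), I insert the new index into $I$ or $J$ and update $s^{M+1}$ and $a^{M+1}$ by removing the consumed letter. Remark~\ref{cor:concatenate} and Remark~\ref{rem:allimportant} justify that the support of the consumed $s$-letter matches that of $a_q$ and that the relative orders recorded in conclusions~(1) and~(2) are preserved under the shuffles Lemma~\ref{product reduction} permits.

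The bookkeeping for conclusions~(1) and~(2) is where the real care lies. I would track how the newly consumed letter slots into the existing orderings $s_{i_1\theta}\circ\dots\circ s_{i_h\theta}\circ s_{j_k\theta}\circ\dots\circ s_{j_1\theta}$ for $s$ and $a_{j_1\theta}\circ\dots\circ a_{j_k\theta}\circ a_{i_h\theta}\circ\dots\circ a_{i_1\theta}$ for $a$. The glueing indices are appended in increasing order on the $s$-side and the deletion indices accumulate in a reversed order, reflecting that later deletions shuffle further to the right of $s$ and correspondingly to the left of $a$; I would verify that the shuffle Proposition~\ref{easy observation} guarantees these reorderings are legitimate, using that each consumed $a$-letter commutes past exactly those letters Lemma~\ref{product reduction} says it must. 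Conclusion~(5), that the glued letters form a complete block, follows because each glueing move consumes a letter whose support is adjacent in $\Gamma$ to the supports of all previously glued letters — otherwise the glueing could not have brought it to the front — so the induced subgraph on their supports is complete.

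I expect the main obstacle to be establishing conclusion~(4), that after the new move both $s^{M+1}\circ s_{i_1\theta}a_{i_1\theta}\circ\dots$ and $\dots\circ a^{M+1}$ remain reduced. A glueing move that produces $s_q a_q$ could in principle create a new cancellable pair with a letter already in $s^{M+1}$ or in $a^{M+1}$, so I must argue that no such hidden reduction survives. The clean way to handle this is to note that each application of Lemma~\ref{product reduction} is stated precisely so that its output words are reduced (parts (a) and (b) of that lemma), and to combine this with Lemma~\ref{block}: once the interface letter is processed, the remaining prefix and suffix decompose as in Lemma~\ref{block}, so reducedness of the two halves is inherited rather than re-proved from scratch. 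Carefully threading these two lemmas through every case of the induction is the delicate part, but it is routine in structure once the labelling convention for $\theta$ and the insertion rules for $I$ and $J$ are fixed.
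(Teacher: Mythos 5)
Your proposal is correct and follows essentially the same route as the paper's own proof: induction on the number of reduction moves, locating each new interface pair via Remark~\ref{cor:concatenate}, using the reducedness of $s_{i_1\theta}a_{i_1\theta}\circ\dots\circ s_{i_h\theta}a_{i_h\theta}\circ a^M$ from conclusion~(4) to force the participating letter of $s$ to be one not yet consumed, and then splitting into the glueing/deletion cases with the same updates to $\theta$, $I$ and $J$. The one citation to adjust is Lemma~\ref{block}, which presupposes an equivalence of two already-reduced words and so cannot be used to re-establish conclusion~(4); instead observe (as the paper implicitly does) that reducedness depends only on the support skeleton, so replacing the letter $y$ by the product $y\,a_{(M+1)\theta}$ --- same support, not an identity --- inside the two reduced words supplied by the inductive hypothesis keeps both halves of~(4) reduced.
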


\begin{proof} We proceed by  induction on $M$, the number of reduction moves, to show that $h,k, I,J, \theta, s^M$ and $a^M$ exist satisfying conditions (1)-(5).

If $M=1$, then by Remark~\ref{cor:concatenate} there must be some letter $z$ of $s$
such that $z$ shuffles to the end of $s$, a letter $a_{\ell}$ of $a$ such that $a_{\ell}$ shuffles to the start of $a$,
such that $\supp(z)=\supp(a_{\ell})$  and either $za_{\ell}\notin I$  or $za_{\ell}\in I$.
In the first case we have $h=1=i_1$ and $k=0$  and in the second we have  $k=1=j_1$ and $h=0$, so that in either case $M=h+k$ and $M$ is the greater of
$i_h$ and $j_k$. In each case  we put $z=s_{1\theta}$ and 
$\ell=1\theta$ so that
$a_{\ell}=a_{1\theta}$. 
We have that $s$ shuffles to $s^1\circ s_{1\theta}$ where  $s^1$ is the subword of $s$ obtained by removing $s_{1\theta}$,   and $a$ shuffles to $a_{1\theta}\circ a^1$ where $a^1$ is the subword of $a$ obtained by removing $a_{1\theta}$.  If $za_{\ell}=s_{1\theta}a_{1\theta}\notin I$ then, since $s$ and $a$ are reduced, we see that
 $s^1\circ s_{1\theta}a_{1\theta}$ and   $s_{1\theta}a_{1\theta}\circ a^1$  are  reduced; this is a `glueing move'. On the other hand, if $za_{\ell}=s_{1\theta}a_{1\theta}\in  I$ then clearly $s^1$ and $a^1$ are reduced; this is a `deletion move'. In either case we see that
 conditions (1)-(5) hold. 

Suppose now that the result holds for some $K$, and a further reduction move can be applied. By assumption, $h,k,I,J, \theta, s^K$ and $a^K$ exist satisfying conditions (1)-(5).

We now extend the domain of $\theta$ to include $K+1$. Let
$\overline{s}^K :=s^K\circ s_{i_1\theta}a_{i_1\theta}\circ\dots\circ s_{i_h\theta}a_{i_h\theta}$.
By (4) we have that $\overline{s}^K$ and $a^K$ are reduced, but we  are assuming that $\overline{s}^K \circ a^K$  is {\em not} reduced.
By Remark~\ref{cor:concatenate} there is a letter $y$ of $\overline{s}^K$ that shuffles to the end of $\overline{s}^K$ and a letter $a_o$ of $a^K$ that shuffles to the front of $a^K$ such that $\supp(y)=\supp(a_{o})$.
By (4) we have that $\overline{a}^K :=s_{i_1\theta}a_{i_1\theta}\circ\dots\circ s_{i_h\theta}a_{i_h\theta}\circ a^K$ is reduced, and so we must have that  $y$ is a letter from  $s^K$. In view of (1) and (2) we must have that  $s$ shuffles to
\[ s^{K+1}\circ  s_{i_1\theta}\circ \dots \circ s_{i_h\theta} \circ y\circ s_{j_k\theta}\circ \dots\circ s_{j_1\theta}\]
where $s^{K+1}$ is $s^K$ with $y$ deleted and $a$ shuffles to
\[ a_{j_1\theta}\circ\cdots\circ a_{j_k\theta}\circ a_o\circ a_{i_h\theta}\circ\cdots\circ a_{i_1\theta}\circ a^{K+1}\]
where $a^{K+1}$ is $a^K$ with $a_o$ deleted. We   extend  our injection $\theta$ to $[1,K+1]$ by defining $o=(K+1)\theta$. 
If  $ya_{o}\notin I$ then we set $i_{h+1}=K+1$ and put $y=s_{i_{h+1}\theta}$ so that
$ya_{o}=s_{i_{h+1}\theta} a_{(K+1)\theta}\notin I$; this is a `glueing move'. If   $ya_{o}\in I$ then we set $j_{k+1}=K+1$ and put $y=s_{j_{k+1}\theta}$ so that
$ya_{o}=s_{j_{k+1}\theta} a_{(K+1)\theta}\in I$; this is a `deletion move'. In the  case of a glueing move we have increased the cardinality of $I$ by one to obtain $I'$; then  $h+1, k, I',J, \theta, s^{K+1}$ and $a^{K+1}$ are such that conditions (1)-(5) hold. 
In the  case of a deletion  move we have increased the cardinality of $J$ by one to obtain $J'$; then  $h,k+1, I,J',\theta, s^{K+1}$ and $a^{K+1}$  are such that conditions (1)-(5) hold.

The result follows by induction.
\end{proof}

We may follow the process in Proposition~\ref{prop:reduction} until we have obtained a reduced form of $s\circ a$;
in view of Lemma~\ref{product reduction} this will be achieved in
$k$ steps for some $k$ less than or equal to the minimum of the length of $s$ and the length of $a$.  

\begin{defn}\label{defn:ind} Let $a=a_1\circ\cdots\circ a_m\in X^*$ be reduced. A function  $\theta:=\theta(h,k,I,J)$ as in   Proposition~\ref{prop:reduction} is a {\em reduction function} for $a$  if it determines the reduction of a word $s\circ a$ (where $s$ is reduced), to a reduced word.
\end{defn}

Note that there are only finitely many reduction functions associated to  a given reduced word $a$. We further observe that if
$a$ contains no left invertible letters in the first block of a left Foata normal form, that is, that shuffle to the front, then the process described in Proposition~\ref{prop:reduction} will be considerably simplified.
Since we need this simplification to prove the main result of Section~\ref{sec:leftidealH} we state it explicitly.

\begin{Cor}\label{cor:reduction} Let $a=a_1\circ\dots\circ a_m\in X^*$ be a reduced word such that $a$ has no left invertible elements that can be shuffled to the front, and let $s\in X^*$ be reduced. Then $s\circ a$ reduces to a reduced word via $N$ glueing moves.

There is an injection $\theta:[1,N]\rightarrow
[1,m]$
and letters $s_{i}$ of $s$ with $\supp(s_i)=\supp(a_{i\theta})$ for $1\leq i\leq N$
such that:
\begin{enumerate} 
\item $s$ shuffles to $s'\circ s_{1}\circ \cdots \circ s_{N}$ where $s'$ is the word $s$ with the letters  $s_{ i}$ deleted, $1\leq i\leq N$;
\item $a$ shuffles to $ a_{1\theta}\circ \cdots \circ a_{N\theta}\circ a'$ where $a'$ is the word $a$ with the letters  $a_{ i\theta}$ deleted, $1\leq i\leq N$;
\item $s\circ a\equiv s'\circ s_{1}a_{1\theta}\circ \cdots \circ s_{N}a_{N\theta}\circ a'$ where $s'\circ s_{1}a_{1\theta}\circ \cdots \circ s_{N}a_{N\theta}\circ a'$ is reduced;
\item   $s'\circ s_{1}a_{1\theta}\circ \cdots \circ s_{N}a_{N\theta}$ and $s_{1}a_{1\theta}\circ \cdots \circ s_{N}a_{N\theta}\circ a'$ are reduced;
\item $s_{1}a_{1\theta}\circ \cdots \circ s_{N}a_{N\theta}$ is a complete block.
\end{enumerate}
\end{Cor}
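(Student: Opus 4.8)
The plan is to derive Corollary~\ref{cor:reduction} as the special case of Proposition~\ref{prop:reduction} in which the set $J$ of deletion moves is empty, so that $k=0$, $h=N$, and $M=N$. To justify this reduction, I would first argue that under the hypothesis that $a$ has no left invertible letters that can be shuffled to the front, \emph{no deletion move can ever occur} during the reduction of $s\circ a$ to a reduced word. A deletion move, by definition, arises when a letter $y$ from the remaining part of $s$ shuffles against a letter $a_o$ of $a$ with $\supp(y)=\supp(a_o)$ and $ya_o\in I$, that is, $ya_o=1$ in the relevant vertex monoid. But $ya_o=1$ forces $a_o$ to be left invertible. The key observation is that in the inductive process of Proposition~\ref{prop:reduction}, the letter $a_o$ consumed at each step is always one that shuffles to the front of the current tail $a^K$ of $a$; since $a$ itself has no left invertible letters shuffling to the front, and deleting letters only ever exposes further front-shuffleable letters, any such $a_o$ would have to be left invertible and shuffleable to the front of $a$, contradicting the hypothesis.

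I would make this precise by induction tracking the invariant that the tail $a^K$ is a reduced word, all of whose front-shuffleable letters are (via Remark~\ref{rem:allimportant}) front-shuffleable letters of the original $a$, hence none of them is left invertible. The cleanest way to phrase this is: a letter of $a^K$ that shuffles to the front of $a^K$ necessarily shuffles to the front of $a$ as well, because the letters removed from $a$ to form $a^K$ all lie in the already-processed prefix and commute appropriately. Once I know $a_o$ is not left invertible, $s_{p\theta}a_{p\theta}=1$ is impossible, so condition (1) of Proposition~\ref{prop:reduction} has $k=0$ and $J=\emptyset$.

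With $J=\emptyset$ established, the statements of Corollary~\ref{cor:reduction} follow by direct transcription from Proposition~\ref{prop:reduction}: setting $N=h=M$, $s_i=s_{i\theta}$ (relabelling so the glueing-move letters are indexed $1,\dots,N$ in order), conditions (1)--(5) of the Proposition collapse to conditions (1)--(5) of the Corollary once the $s_{j_o\theta}$ and $a_{j_o\theta}$ terms vanish. The shuffle of $s$ in Proposition~(1) becomes $s'\circ s_1\circ\cdots\circ s_N$ with $s'=s^M$, the shuffle of $a$ in (2) becomes $a_{1\theta}\circ\cdots\circ a_{N\theta}\circ a'$ with $a'=a^M$, and Proposition~(3) now gives $s\circ a\equiv s'\circ s_1a_{1\theta}\circ\cdots\circ s_Na_{N\theta}\circ a'$; since with no deletion moves the entire expression is reduced (there is nothing further to glue or delete), this word is reduced, giving the strengthened reducedness claim in Corollary~(3). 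Conditions (4) and (5) are verbatim the same.

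The main obstacle is the inductive bookkeeping that no front-shuffleable letter of any intermediate tail $a^K$ can be left invertible. The delicate point is that removing letters from $a$ could in principle expose \emph{new} front-shuffleable letters that were not originally front-shuffleable in $a$. I would resolve this by appealing to Corollary~\ref{cor:firstblock}: the letters shuffling to the front are exactly those whose support is adjacent (in $\Gamma$) to the supports of all earlier letters. Since the letters consumed by previous glueing moves are precisely ones that themselves shuffled to the front, they commute with everything preceding them, so a letter front-shuffleable in $a^K$ was already front-shuffleable in $a$; hence by hypothesis it is not left invertible. Once this commutation argument is in place, the rest is a mechanical specialisation of Proposition~\ref{prop:reduction}, and the claim that the final word is reduced follows because glueing moves alone terminate exactly when $\overline{s}^K\circ a^K$ is reduced.
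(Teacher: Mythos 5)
Your overall strategy --- specialise Proposition~\ref{prop:reduction} by showing that under the hypothesis on $a$ the set $J$ of deletion moves is empty, then transcribe conditions (1)--(5) with $k=0$, $N=h=M$ --- is exactly the route the paper intends (the paper states the Corollary as an explicit record of how the Proposition ``will be considerably simplified'' in this case, with no separate proof). The transcription half of your argument is fine, including the observation that the full word in item (3) is reduced because the process has been run to termination. However, the justification you give for the key step contains a false lemma. You claim as your central invariant that ``a letter front-shuffleable in $a^K$ was already front-shuffleable in $a$,'' deduced from the fact that the consumed letters ``commute with everything preceding them.'' That inference is a non sequitur: a consumed letter commutes with the letters that \emph{preceded it} at the moment of consumption, not with the letters that \emph{follow} it, and it is adjacency to the later letter $a_o$ that you would need. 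Concretely, take $a=c\circ d$ with $c\in M_\alpha$, $d\in M_\beta$, $(\alpha,\beta)\notin E$, $c$ not left invertible and $d$ left invertible. Then $a$ satisfies the hypothesis (only $c$ shuffles to the front), the first move is a glueing move consuming $c$, and in the tail $a^1=d$ the letter $d$ is front-shuffleable even though it is not front-shuffleable in $a$. So your invariant fails, and with it your argument that the exposed letter cannot be left invertible.

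The conclusion (no deletion moves) is nevertheless true, but the blocking constraint comes from the $s$-side, which your argument never uses. Consider the \emph{first} putative deletion move, so that all previously consumed letters are glued letters $a_{i_1\theta},\dots,a_{i_h\theta}$. The letter $y$ effecting the deletion lies in $s^K$, and to apply Remark~\ref{cor:concatenate} it must shuffle to the end of $\overline{s}^K=s^K\circ s_{i_1\theta}a_{i_1\theta}\circ\cdots\circ s_{i_h\theta}a_{i_h\theta}$, i.e.\ past the entire glued block; hence $\supp(a_o)=\supp(y)$ is adjacent to $\supp(a_{i_l\theta})$ for every $1\leq l\leq h$. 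Since $a_o$ also commutes with every letter of $a^K$ preceding it, and every letter of $a$ preceding $a_o$ is either a glued letter or a letter of $a^K$ preceding $a_o$ (subword order being preserved, by Remark~\ref{rem:allimportant}), it follows that $a_o$ commutes with \emph{all} letters of $a$ preceding it, hence shuffles to the front of $a$ itself by Corollary~\ref{cor:firstblock}. Then $ya_o\in I$ exhibits $a_o$ as a left invertible letter in the first block of $a$, contradicting the hypothesis. In the counterexample above this is visible directly: the deletion of $d$ is prevented not because $d$ fails to reach the front of the tail (it does reach it), but because $y\in M_\beta$ cannot shuffle past the glued letter $s_{1\theta}c\in M_\alpha$. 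With this repair --- adjacency extracted from $y$'s passage through the glued block, combined with $a_o$'s position in the tail --- the rest of your specialisation goes through verbatim.
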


Where we are only reducing a single product $s\circ a$, we can also assume, via relabelling $a$ before we begin the reduction, that $\theta$ is the identity in the above result. Unfortunately  in Section~\ref{sec:L} we must keep track of two concurrent reductions $s\circ a$ and $t\circ a$, so the explicit use of $\theta$ will  still be needed.

\bigskip

In Section~\ref{sec:leftidealH} we consider the intersections of principal left ideals.  Clearly
 $\mathscr{GP}[a']\cap \mathscr{GP}[b']$  is non-empty if and only if there exist $u',v'\in X^*$ such that $[u'\circ a']=[v'\circ b']$.  Of course, these products may not be immediately reduced. However, we will be able to reduce our considerations to expressions of the form 
 $u\circ a$  and $v\circ b$, where $[u\circ a]=[v\circ b]$ and $u\circ a$ and $v\circ b$ are reduced. To proceed from that point we will need the following analysis.

\begin{Prop}\label{prop:factorisation}  Suppose that  $u,v,a,b\in X^*$ are such that
$u\circ a, v\circ b$ are reduced and $[u\circ a]=[v\circ b]$.

Then there exist subwords  $a'$ of $a$ and $b'$ of  $b$ and $w\in X^*$  such that $w\circ a'\circ b$ and $w\circ b'\circ a$ are reduced,
\[[u]=[w\circ b'], [v]=[w\circ a'] \mbox{ and }[a'\circ b]=[b'\circ a].\]

Moreover, $a'$ and $b'$ are unique and do not depend upon  $u$ and $v$.
\end{Prop}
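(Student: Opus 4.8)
Since $[u\circ a]=[v\circ b]$ and both sides are reduced, Theorem~\ref{shuffle} tells me the two words are shuffle equivalent, so in particular they have the same length and the same multiset of letters (and the same support). The idea is to read off $a'$, $b'$ and $w$ directly from this shuffle. Intuitively, $a'$ should be the subword of letters of $a$ that shuffle, in the equivalence $u\circ a \equiv v\circ b$, into the $v$-part, while $b'$ is the subword of $b$ that shuffles into the $u$-part; the common remainder is $w$. I would make this precise using Remark~\ref{pre1}(ii): given the fixed shuffle witnessing $u\circ a \equiv v\circ b$, I can partition the letters of $u\circ a$ according to whether each lands in the $v$-block or the $b$-block of $v\circ b$, and dually partition the letters of $v\circ b$.

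\textbf{Key steps.} First I would set up the shuffle $\pi$ realising $u\circ a \equiv v\circ b$ and use Remark~\ref{rem:allimportant} to control how letters with equal support behave: letters sharing a support keep their relative order under any shuffle, so the bookkeeping is well-defined. Next I would define $b'$ to be the subword of $b$ consisting of those letters of $b$ that, under $\pi$, occupy positions within the initial segment of length $|u|$ of $v\circ b$ (equivalently, the $b$-letters that shuffle ``into the $u$-part''), and symmetrically $a'$ as the subword of $a$ shuffling into the $v$-part. Then I would identify $w$ as the common subword that the letters of $u$ not matched to $b'$ (equivalently, the letters of $v$ not matched to $a'$) form; by Remark~\ref{pre1}(ii), deleting the matched subwords from both sides of a shuffle equivalence leaves shuffle-equivalent (hence equal) words, giving $[u]=[w\circ b']$ and $[v]=[w\circ a']$. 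Applying Lemma~\ref{block} (or Remark~\ref{pre1}(ii) again) to cancel $w$ from $[u\circ a]=[v\circ b]$ after rewriting should yield $[a'\circ b]=[b'\circ a]$. Checking that $w\circ a'\circ b$ and $w\circ b'\circ a$ are reduced amounts to verifying no new length-reducing move is possible, which I would argue from the reducedness of $u\circ a$ and $v\circ b$ together with $\supp(a')\subseteq\supp(a)$, $\supp(b')\subseteq\supp(b)$ and the support-separation forced by the shuffle.

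\textbf{Uniqueness and independence.} For the final clause I would argue that $a'$ and $b'$ are determined support-coordinate by support-coordinate: fixing a vertex $\gamma$, Remark~\ref{rem:allimportant} says the $\gamma$-letters appear in a fixed left-to-right order in every word in the shuffle class, so the number of $\gamma$-letters of $a$ that must migrate past the $u/v$-boundary is forced by comparing how many $\gamma$-letters lie in $u$ versus $v$ and in $a$ versus $b$. This count depends only on the $\gamma$-content of $a$, $b$, and the total element, not on the particular factorisation into $u$ and $v$; more precisely, since changing $u,v$ (with $u\circ a$, $v\circ b$ still reduced representing the same element) cannot change the multiset of letters of $a$ lying ``to the right'' of each potential cut, the subwords $a'$ and $b'$ are invariant. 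I would phrase independence of $u,v$ by noting the construction only used $a$, $b$ and the identity $[u\circ a]=[v\circ b]$ through the fixed $\gamma$-counts, not through $u,v$ themselves.

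\textbf{The main obstacle.} I expect the genuinely delicate point to be uniqueness together with the reducedness claims, rather than existence. Existence is essentially unwinding a single shuffle with Remark~\ref{pre1}(ii). The hard part is pinning down $a'$ and $b'$ canonically: a priori a letter of $a$ could either shuffle into the $v$-part or stay, and one must show the choice is forced. The right tool is Remark~\ref{rem:allimportant}'s order-preservation of equal-support letters, which converts the problem into a per-vertex counting argument; carefully formulating this count, and confirming that the selected letters genuinely form a legitimate subword whose deletion yields reduced words, is where the real work lies.
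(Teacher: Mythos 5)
Your proposal is correct, and its existence part is essentially the paper's argument: the paper likewise fixes the shuffle between $v\circ b$ and $u\circ a$, takes $b'$ to be the longest subword of $b$ shuffling into $u$ and $a'$ the subword of $a$ shuffling into $v$, shows via Proposition~\ref{easy observation} that $u$ shuffles to $w_u\circ b'$ and $v$ to $w_v\circ a'$, and then applies Lemma~\ref{block} to split $[w_u\circ b'\circ a]=[w_v\circ a'\circ b]$ into $[w_u]=[w_v]$ and $[b'\circ a]=[a'\circ b]$. One small simplification you could adopt: the reducedness of $w\circ b'\circ a$ and $w\circ a'\circ b$ is immediate because these words are shuffle equivalent to the reduced words $u\circ a$ and $v\circ b$, so there is no need for your proposed re-verification that no length-reducing move applies. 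The one genuine detail to supply in your write-up is the step $[u]=[w\circ b']$: Remark~\ref{pre1}(ii) only gives equality of the words obtained by \emph{deleting} matched subwords, not the factorisation of $u$ with $b'$ as a suffix; for that you need the paper's interleaving argument (if $u_\ell=x\circ c$ sits between two $b'$-letters, then those letters commute past $x$, by Proposition~\ref{easy observation}).

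Where you genuinely diverge is the uniqueness clause. The paper proceeds by induction on $|a|+|b|$, peeling off the last letter of $b$ and analysing whether it shuffles to a letter of $a$ or to the last letter of $b'$. Your per-vertex counting argument is a different and, in my view, cleaner route, and it can be made fully precise: writing $k_\gamma,l_\gamma,m_\gamma,n_\gamma$ for the numbers of $\gamma$-letters of $u,v,a,b$, Remark~\ref{rem:allimportant} matches the $j$'th $\gamma$-letter of $u\circ a$ with the $j$'th $\gamma$-letter of $v\circ b$, so the $\gamma$-letters of $b$ landing in $u$ are exactly those with index $l_\gamma<j\leq k_\gamma$, and since $k_\gamma+m_\gamma=l_\gamma+n_\gamma$ these are precisely the first $n_\gamma-m_\gamma$ (when positive) $\gamma$-letters of $b$. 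Thus $b'$, and dually $a'$, is \emph{explicitly} determined by $a$ and $b$ alone, which buys both an explicit description absent from the paper and directly the independence-from-$(u,v)$ statement that is what later results (Lemma~\ref{one reduction-1}, Remark~\ref{one reduction-2}) actually use. Your scoping of uniqueness is also the safer one: the bare condition that $a'$, $b'$ are subwords with $[a'\circ b]=[b'\circ a]$ and both sides reduced does not by itself pin down the pair (for instance, if $a=b=c\circ d$ with $(\supp(c),\supp(d))\notin E$, both $(\epsilon,\epsilon)$ and $(d,d)$ satisfy it), so uniqueness should be read, as you read it, as uniqueness of the subwords produced by the shuffle construction across all witnessing pairs $(u,v)$ --- which is exactly what your counting delivers.
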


\begin{proof} Let $a=a_1\circ\dots\circ a_m$ and $ b=b_1\circ\dots\circ b_n$ be reduced.    Consider the  shuffle from $v\circ b$ to $u\circ a$. Let $b_{i_1} \circ b_{i_2}\circ \cdots \circ b_{i_p}$ be  the longest subword of $b$ that shuffles under this shuffle to a  subword of $u$. Then $u=u_0\circ b_{i_1\sigma}\circ u_1\circ b_{i_2\sigma}\circ \cdots \circ b_{i_p\sigma}\circ u_p$,  for some permutation $\sigma$ of $\{i_1, \cdots, i_p\}$, and where
$u_i$ is a (possibly empty) subword of $u$
for $0\leq i\leq p$. Similarly, considering the shuffle in the opposite direction, let
$a_{j_1} \circ a_{j_2}\circ \cdots \circ a_{j_q}$ be  the longest subword of $a$ that shuffles under this shuffle  to a subword
of $v$. Then
$v=v_0\circ a_{j_1\delta}\circ v_1\circ a_{j_2\delta}\circ \cdots \circ a_{j_q\delta}\circ v_q$,  for some permutation $\delta$ of $\{j_1, \cdots, j_q\}$, and where
$v_i$ is a (possibly empty) subword of $v$
for $0\leq i\leq q$. Notice that $m=q+s$ and $n=p+s$ where $s$ is the number of letters of $a$ and $b$ that shuffle to each other, so that $p+m=q+n$.

Suppose that $u_{{\ell}}=x\circ c$ where $x\in X, c\in X^*$. Then there must be a letter of $v$ that shuffles to $x$ under the shuffle from $v\circ b$ to $u\circ a$. Since in $v\circ b$, every letter of $v$ precedes every letter of $b$,
 it follows from Proposition~\ref{easy observation} that $b_{i_\ell\sigma}\circ x$ shuffles to $x\circ b_{i_\ell\sigma}$.
   Continuing in this manner
 we obtain that $u$  shuffles to $w_u\circ b''$ where $w_u=u_0\circ u_1\circ\cdots \circ u_p$ and
 $b''=b_{i_1\sigma} \circ b_{i_2\sigma}\circ \cdots \circ b_{i_p\sigma}$. It follows from
 Proposition~\ref{easy observation} that $b''$ shuffles to $b'$ where $b'=b_{i_1} \circ b_{i_2}\circ \cdots \circ b_{i_p}$
 and so $u$ shuffles to $w_u\circ b'$.
Dually, $v$ is shuffle equivalent to $w_v\circ a'$ where $w_v=v_0\circ v_1\circ\cdots \circ v_q$ and  $a'=a_{j_1} \circ a_{j_2}\circ \cdots \circ a_{j_p}$.

We now have that $w_u\circ b'\circ a$ and $ w_v\circ a'\circ b$ are shuffle equivalent  to $u\circ a$ and $v\circ b$; in particular, they are reduced.   An analysis of the four shuffling processes, given that the letters of $w_u$ and $w_v$ are precisely those appearing in neither $a$ nor $b$, yields that $w_v$ shuffles to $w_u$.
We call upon Lemma~\ref{block} to deduce that
 $[w_u]=[w_v]:=[w]$ and $[a'\circ b]=[b'\circ a]$; we have already shown that $[u]=[w\circ b']$ and $[v]=[w\circ a']$.

We now show that $a'$ and $b'$ are the  unique subwords of $a$ and $b$ such that $[a'\circ b]=[b'\circ a]$, and hence that they do not depend upon $u$ and $v$.
We proceed by induction on $|a|+|b|$.  If $|a|+|b|=0$,  the result is clear.

Assume  that $|a|+|b|>0$ and the result is true for all
reduced words $\bar{a}$ and $\bar{b}$ with $|\bar{a}|+|\bar{b}|<|a|+|b|$ and such that $[ \bar{b}'\circ \bar{a}]=[\bar{a}'\circ \bar{b}]$ for some subwords $\bar{a}'$ of $\bar{a}$ and $\bar{b}'$ of $\bar{b}$. Without loss of generality, assume that $|b|>0$.
Consider subwords $a'$ and $b'$  of $a$ and $b$  respectively such that   $[a'\circ b]=[b'\circ a]$.

Suppose first that in the shuffle from
$a'\circ b$ to $b'\circ a$ the last letter of $b$ shuffles to a letter $c$ of $a$.
We note that this happens {\em if and only if} $\alpha=s(c)\in \supp(a)$, and does not depend upon the choice of $a'$ and $b'$. In this case,
$c$ must be the last letter of $a$ to have support $\alpha$. Lemma~\ref{block} gives that 
$[a'\circ \bar{b}]=[b'\circ \bar{a}]$, where $\bar{b}$ is $b$ with the last letter, $c$, deleted, and $\bar{a}$ is $a$ with $c$ deleted.
Notice that $a'$ is a subword of $\bar{a}$ and $b'$ is a subword of $\bar{b}$ and $|\bar{a}|+|\bar{b}|<|a|+|b|$. We obtain the uniqueness of
$a'$ and $b'$ from the induction hypothesis.

Finally, suppose  that in the shuffle from
$a'\circ b$ to $b'\circ a$ the last letter of $b$ shuffles to a letter $c$ of $b'$,  which clearly must be the last letter of $b'$.
It follows that $(\supp(c),\supp(d))\in E$ for all letters $d$ of $a$. Again, let $\bar{b}$ be the word $b$ with the last letter, $c$, deleted,
and let $\overline{b}'$ be $b'$ with the last letter, $c$, deleted. Then $a'\circ \bar{b}$ is reduced and again calling upon 
Lemma~\ref{block} we
have that $[a'\circ \bar{b}]=[\overline{b}'\circ a]$ (and $\overline{b}'\circ a$ is also reduced). We have $|a|+|\bar{b}|<|a|+|b|$
and $\overline{b}'$ is a subword of $\bar{b}$, so that $a'$ and $\overline{b}'$  are uniquely determined by $a$ and $\bar{b}$. Consequently,
$a'$ and $b'$  are uniquely determined by $a$ and $b$.
\end{proof}

In the above discussion,  given that there are {\em some} $u,v$ with $[u\circ a]=[v\circ b]$, the emerging equality
$[a'\circ b]=[b'\circ a]$ is independent of the values of the letters of $a$ and $b$ and depends only on their support and whether corresponding letters match. To this end we introduce the following.

\begin{defn}\label{defn:supportskeleton}
Let $a=a_1\circ \cdots \circ a_n$ and $b=b_1\circ \cdots \circ b_n \in X^*$. We say that $a$ and $b$ have the {\it same support skeleton} if $\supp(a_i)=\supp(b_i)$ for all $1\leq i\leq n$.
\end{defn}

Lemma ~\ref{one reduction-1} follows from the fact that for any words $w,w'\in X^*$ with the same support skeleton, any sequence of shuffles applied to $w$ can be applied to $w'$, and vice versa.

\begin{Lem}\label{one reduction-1}
Let $a, b, u$ and $ v$ be as  in Proposition \ref{prop:factorisation}. Let $c=c_1\circ \cdots \circ c_m$ and $ d=d_1\circ \cdots \circ d_n$
have the same support skeleton as $a$ and $b$, respectively, and be such that
there are words $p,q\in X^*$ such that  $p\circ c$ and $ q\circ d$ are reduced and $[p\circ c]=[q\circ d]$.

 Let $d'=d_{i_1}\circ \cdots \circ d_{i_p}$ and $c'=c_{j_1}\circ \cdots \circ c_{j_q}$, where $i_1, \cdots, i_p, j_1\circ \cdots j_q$ are the subscripts chosen in Proposition~\ref{prop:factorisation}.  Then there exists $g\in X^*$ such that $g\circ d'\circ c$ and $g\circ  c'\circ d$ are reduced, \[[p]=[g\circ d'], [q]=[g\circ c'] \mbox{ and }[d'\circ c]=[c'\circ d].\]
\end{Lem}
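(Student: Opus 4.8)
The plan is to exploit the observation made immediately before the statement: the whole construction of Proposition~\ref{prop:factorisation} is driven only by the support skeletons of the two words and by which letters shuffle to which, not by the actual letter values. Since $c$ and $a$ share a support skeleton, and $d$ and $b$ share a support skeleton, the four shuffling processes carried out in the proof of Proposition~\ref{prop:factorisation} --- namely the shuffle of $u\circ a$ to $v\circ b$ and back, and the extractions of the subwords $b''$ from $u$ and $a''$ from $v$ --- can be transcribed verbatim to the pair $p\circ c,\,q\circ d$. Concretely, I would first apply Proposition~\ref{prop:factorisation} to the hypothesised reduced words $p\circ c$ and $q\circ d$ with $[p\circ c]=[q\circ d]$, obtaining subwords $\tilde d'$ of $d$ and $\tilde c'$ of $c$, together with some $g\in X^*$, such that $g\circ\tilde d'\circ c$ and $g\circ\tilde c'\circ d$ are reduced and $[p]=[g\circ\tilde d']$, $[q]=[g\circ\tilde c']$, $[\tilde d'\circ c]=[\tilde c'\circ d]$.

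The substance of the argument is then to identify the subscript sets produced by this fresh application of the proposition with the subscript sets $\{i_1,\dots,i_p\}$ and $\{j_1,\dots,j_q\}$ fixed in the statement from the original pair $a,b$. For this I would invoke the uniqueness clause of Proposition~\ref{prop:factorisation}: the subwords $a'$ of $a$ and $b'$ of $b$ with $[a'\circ b]=[b'\circ a]$ are \emph{unique}, and similarly $\tilde c',\tilde d'$ are the unique subwords of $c,d$ with $[\tilde c'\circ d]=[\tilde d'\circ c]$. The key point to establish is that a choice of subscripts $\{i_1,\dots,i_p\}\subseteq[1,n]$ and $\{j_1,\dots,j_q\}\subseteq[1,m]$ gives a valid factorisation $[c_{\{j\}}\circ d]=[d_{\{i\}}\circ c]$ \emph{if and only if} the same subscripts give $[a_{\{j\}}\circ b]=[b_{\{i\}}\circ a]$; this is precisely the content of the phrase "for any words with the same support skeleton, any sequence of shuffles applied to $w$ can be applied to $w'$, and vice versa" that the authors invoke. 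Granting this, the uniqueness forces $\tilde d'=d_{i_1}\circ\cdots\circ d_{i_p}=d'$ and $\tilde c'=c_{j_1}\circ\cdots\circ c_{j_q}=c'$, which is exactly the labelling asserted in the statement, and the required equalities $[p]=[g\circ d']$, $[q]=[g\circ c']$, $[d'\circ c]=[c'\circ d]$ follow directly.

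The main obstacle --- really the only delicate point --- is making the "same support skeleton transfers shuffles" principle rigorous enough to transport the \emph{uniqueness} of the subscripts, not merely the existence of some factorisation. I would handle this by setting up an explicit bijection between shuffles of words over $X^*$ and shuffles of words with the same support skeleton: a shuffle is an application of a relation from $R_{\textsf{e}}$, which by Definition~\ref{defn:shuffle} depends only on the supports of the two adjacent letters being transposed and on whether that pair of supports lies in $E$; hence a permutation $\sigma$ realises a shuffle of $a$ (in the sense characterised by Proposition~\ref{easy observation}) if and only if it realises the corresponding shuffle of $c$. Combined with Remark~\ref{rem:freeprod}, which reduces the equality of two words sharing a vertex-block to an equality inside a single vertex monoid, and with Lemma~\ref{block} to peel off matching blocks, this lets the entire inductive uniqueness argument of Proposition~\ref{prop:factorisation} be replayed for $c,d$ with the \emph{same} combinatorial subscript data. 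Once that transfer is in place, the lemma is immediate, so I would keep the write-up short: state the transfer principle, apply Proposition~\ref{prop:factorisation} to $p\circ c,q\circ d$, and invoke uniqueness to pin down the subscripts.
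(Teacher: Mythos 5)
Your overall strategy is the paper's own: the proof given in the paper is a single sentence observing that a sequence of shuffles depends only on the support skeleton of the word it is applied to, and your plan of applying Proposition~\ref{prop:factorisation} afresh to $p\circ c$ and $q\circ d$ and then pinning down the subscripts via the uniqueness clause is exactly how that observation is meant to be used. There is, however, one step in your write-up that would fail if carried out as you state it.

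The biconditional you call the key point --- that a subscript choice gives $[c'\circ d]=[d'\circ c]$ if and only if it gives $[a'\circ b]=[b'\circ a]$, to be proved by transporting shuffles across equal support skeletons --- is not obtainable by that argument. A shuffle equivalence between reduced words is a letter-\emph{value}-preserving repositioning (Theorem~\ref{shuffle} with Remark~\ref{rem:allimportant}): in the equivalence $[a'\circ b]=[b'\circ a]$, every letter of $b$ outside $b'$ must shuffle onto an \emph{equal} letter of $a$, and this matching of values is a genuine constraint beyond supports. Transporting the shuffle positionally from $(a,b)$ to $(c,d)$ produces a word with the correct support skeleton but possibly the wrong letters; this is precisely why Lemma~\ref{lem:reduction-3} has to impose the extra hypothesis $c_{u_r}=d_{v_{r\theta}}$ on the complementary letters, a hypothesis you do not have here. (For instance, with $a=b$ a single letter $x$ and $u=v=\epsilon$, the empty subscript sets give a valid factorisation for $(a,b)$, but not for $c=x$, $d=y$ with $y\neq x$ of the same support.) The repair is the route your final paragraph gestures at, and it should be made the centrepiece rather than the biconditional: the hypothesis supplies $p,q$, so Proposition~\ref{prop:factorisation} applied to $p\circ c$, $q\circ d$ gives \emph{existence} of a factorisation of $(c,d)$; and the uniqueness induction in that proposition is driven purely by support data --- at each step, whether the last letter of the second word is matched into the first word or absorbed into the extracted subword is decided by whether its support lies in the support of the first word (there are no loops in $\Gamma$), and the matched letter is then the last letter of the first word with that support. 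Hence, \emph{given existence}, the subscript sets are functions of the support skeletons alone, so those produced for $(c,d)$ must coincide with $\{i_1,\dots,i_p\}$ and $\{j_1,\dots,j_q\}$. With the false biconditional replaced by this one-directional, existence-conditional transfer, your proof is correct and agrees with the paper's.
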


We remark that in Lemma~\ref{one reduction-1} it is certainly not true that for any words $c$ and $d$ with the same support skeleton as $a$ and $b$ we have
that such a $p$ and $q$ exist. We now consider the case where such elements do exist.

\begin{Lem}\label{lem:reduction-3} Let $a=a_1\circ\cdots \circ a_m$ 
and  $b=b_1\circ\cdots \circ b_n$ be reduced. Suppose there are  subwords
\[a'=a_{j_1}\circ\cdots\circ a_{j_l}\mbox{ and }b'=b_{i_1}\circ\cdots\circ b_{i_k}\]
of $a$ and 
$b$ respectively such that
\[[b'\circ a]=[ a'\circ b]\]
where  $b'\circ a$ and $a'\circ b$ are both  reduced.  Let $a''=a_{u_1}\circ \cdots \circ a_{u_{m-l}}$ and
$b''=b_{v_1}\circ\cdots \circ b_{v_{n-k}}$ be the subwords of $a$ and $b$ obtained by deleting $a'$ and $b'$, respectively. Note that $m-l=n-k=w$; let
$\theta:[1,w]\rightarrow [1,w]$ be the bijection induced by the shuffle from $b'\circ a$ to $a'\circ b$ such that $a_{u_r}$ shuffles to $b_{v_{r\theta}}$.

Now let $c=c_1\circ\cdots \circ c_m$ 
and  $d=d_1\circ\cdots \circ d_n$ be reduced words with the same support skeletons as $a$ and $b$, respectively such that $c_{u_r}=d_{v_{r\theta}}$ for $r\in [1,w]$. Then
\[[c'\circ d]= [d'\circ c]\]
where 
\[c'=c_{j_1}\circ\cdots\circ c_{j_l}\mbox{ 
and }
d'=d_{i_1}\circ\cdots\circ d_{i_k}.\]
\end{Lem}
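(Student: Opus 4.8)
The plan is to transport the single shuffle witnessing $[b'\circ a]=[a'\circ b]$ across to the pair $(c,d)$, exploiting the fact that shuffle equivalence is governed purely by supports. Since $b'\circ a$ and $a'\circ b$ are reduced and represent the same element, Theorem~\ref{shuffle} gives that they are shuffle equivalent; I would fix one such shuffle and let $\pi$ be the induced bijection on positions, so that the letter in position $t$ of $b'\circ a$ is carried to position $t\pi$ of $a'\circ b$. It is convenient to record the three kinds of letters moved by $\pi$: the letters of $a'$ (which sit inside the $a$-block of $b'\circ a$) are carried to the front block $a'$ of $a'\circ b$; the letters of $b'$ (the front block of $b'\circ a$) are carried to the $b'$-positions $i_1,\dots,i_k$ inside the trailing $b$-block; and each remaining letter $a_{u_r}$ of $a''$ is, by the very definition of $\theta$, carried to the slot of $b_{v_{r\theta}}$ in that trailing block.

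Next I would observe that $d'\circ c$ has exactly the same support skeleton as $b'\circ a$: indeed $\supp(d_{i_s})=\supp(b_{i_s})$ and $\supp(c_t)=\supp(a_t)$ for all relevant indices, by hypothesis. A shuffle is a sequence of applications of relations in $R_{\textsf{e}}$, each transposing two adjacent letters whose supports span an edge; since the supports coincide position-for-position, every move in the fixed shuffle of $b'\circ a$ is simultaneously legal for $d'\circ c$ (this is precisely the transfer principle recorded at Lemma~\ref{one reduction-1}). Applying that sequence to $d'\circ c$ therefore produces a word $W$ in which the letter originally in position $t$ has moved to position $t\pi$.

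The crux is to identify $W$ with $c'\circ d$, and here the matching hypothesis $c_{u_r}=d_{v_{r\theta}}$ is essential. Reading off $W$ via $\pi$: the front block of $W$ receives the letters $c_{j_1},\dots,c_{j_l}$ coming from the positions that held $a'$, so it is exactly $c'$. In the trailing block, the $b'$-positions $i_1,\dots,i_k$ receive $d_{i_1},\dots,d_{i_k}$ from the front $d'$-block of $d'\circ c$, while each position $v_{r\theta}$ receives the letter $c_{u_r}$ from the slot that held $a_{u_r}$; since $c_{u_r}=d_{v_{r\theta}}$, that position holds $d_{v_{r\theta}}$. Thus every position $t$ of the trailing block of $W$ holds $d_t$, so the trailing block is $d$ and $W=c'\circ d$. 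As $W$ is obtained from $d'\circ c$ solely by relations in $R_{\textsf{e}}$, we conclude $[d'\circ c]=[W]=[c'\circ d]$. I expect the only delicate point to be this last bookkeeping, specifically checking that the $a''$-letters of $c$ land precisely on the $b''$-slots of $d$ so that the matching hypothesis can be invoked; note that reducedness of the four words plays no role in the conclusion, since shuffles preserve the represented element regardless.
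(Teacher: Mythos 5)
Your proposal is correct and takes essentially the same route as the paper's own proof: both arguments transport the fixed shuffle taking $b'\circ a$ to $a'\circ b$ across to $d'\circ c$ via the equality of support skeletons (the transfer principle recorded just before Lemma~\ref{one reduction-1}), and then invoke the matching hypothesis $c_{u_r}=d_{v_{r\theta}}$ to identify the transported word, position by position, as $c'\circ d$. Your explicit three-way bookkeeping of where the $a'$-, $b'$- and $a''$-letters land is precisely the bookkeeping implicit in the paper's step writing the transported word as $c'\circ d''$ with $d''_{i_r}=d_{i_r}$ and $d''_{v_{s\theta}}=c_{u_s}=d_{v_{s\theta}}$, so the two proofs coincide in substance.
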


\begin{proof} Since $c$ and $d$ have the same support skeleton as $a$ and $b$ respectively,
the same sequence of moves applied to take $b'\circ a$ to $a'\circ b$ will take
$d'\circ c$ to $c'\circ d''$ where $d''=d''_1\circ\cdots\circ d_n''$ is a word of length $n$ such that 
$d''_{i_r}=d_{i_r}$. For $j\notin \{ i_1,\cdots, i_k\}$, that is,
$j\in\{ v_1,\cdots, v_w\}$ we have $j=v_{s\theta}$ for some $s\in [1,w]$ and
$c_{u_s}=d_{v_{s\theta}}$ shuffles to $d''_{v_{s\theta}}$ so that  $d''_{v_{s\theta}}=d_{v_{s\theta}}$ and so $d''=d$.
\end{proof}

\begin{Rem}\label{one reduction-2}
Let $s, t, u, v, a, b$ be reduced such that both $a$ and $b$ have no left invertible elements that can be shuffled to the front. Suppose that $$[s\circ a]=[t\circ b]\mbox{~and~}[u\circ a]=[v\circ b].$$ Corollary~\ref{cor:reduction} gives that  $s\circ a$ and $u\circ a$ reduce to $s'\circ c$ and $u'\circ e$, respectively, such that $c$ and $e$ have the same support skeleton as $a$. Similarly,  $t\circ b$ and $v\circ b$ reduce to $t'\circ d$ and $v'\circ f$, respectively, such that $d$ and $f$ have the same support skeleton as $b$. Therefore, we have $$[s'\circ c]=[t'\circ d]\mbox{ and }[u'\circ e]=[v'\circ f].$$ By Proposition \ref{prop:factorisation}, we have $[d'\circ c]=[c'\circ d]$ where $c'=c_{j_1}\circ \cdots \circ c_{j_l}$ is a subword of $c$ and $d'=d_{i_1}\circ \cdots \circ d_{i_k}$ is a subword of $d$. Let $e'=e_{j_1}\circ \cdots \circ e_{i_l}$ be a subword of $e$ and $f'=f_{i_1}\circ \cdots \circ f_{i_k}$ be a subword of $f$. Then $[f'\circ e]=[e'\circ f]$ by Lemma \ref{one reduction-1}.
\end{Rem}

We need one more refinement of Proposition~\ref{prop:factorisation} in both Sections~\ref{sec:leftidealH} and ~\ref{sec:L}. 

\begin{Lem}\label{lem:thedoubleshuffle}   Let $a=a_1\circ\cdots \circ a_m$ 
and  $b=b_1\circ\cdots \circ b_n$ be reduced words. Suppose there are  subwords
\[a'=a_{j_1}\circ\cdots\circ a_{j_l}\mbox{ and }b'=b_{i_1}\circ\cdots\circ b_{i_k}\]
of $a$ and 
$b$ respectively such that
\[[b'\circ a]=[ a'\circ b]\]
where  $b'\circ a$ and $a'\circ b$ are both  reduced. In addition, suppose that 
\[a= a^{\ell}\circ a^r\mbox{ and }b= b{^\ell}\circ b^r\]
where $a^\ell$ and $b^\ell$ are complete blocks (so (left) factors of the first block of $a$ and $b$ in left Foata normal form, respectively). 
In addition, we let $a'=a^{(\ell)}\circ a^{(r)}$, where $a^{(\ell)}$ is a subword of $a^\ell$ and $a^{(r)}$ is a subword of $a^r$;
we define $b^{(\ell)}$ and $b^{(r)}$ correspondingly. Let $a^\ell=a_1\circ\cdots\circ  a_M$ and let $b^\ell=b_1\circ \cdots \circ b_N$.   Considering the shuffle $b'\circ a\equiv a'\circ b$ let 
\[\begin{array}{rclrcl}
I_{a}&=& \{ i\in [1,M]: a_i\mbox{ shuffles to a letter of  }b^\ell\}& 
I_b&=& \{ j\in [1,N]: b_j\mbox{ shuffles to a letter of }a^\ell\}\\
J_a&=&[1,M]\cap \{ j_1,\cdots, j_l\}& J_b&=&[1,N]\cap \{ i_1,\cdots ,i_k\}\\
K_a&=&[1,M]\setminus (I_{a}\cup J_a)&K_b&=&[1,N]\setminus ( I_b\cup J_b).
\end{array}\]
By construction,  $I_a,J_a$ and $K_a$ are mutually disjoint, as are $I_b,J_b$ and $K_b$, and there is a bijection $\sigma:I_{a}\rightarrow I_b$ such that $a_i$ shuffles to $b_{i\sigma}$. Further,  
\[a'\circ b\equiv a^{(\ell)}\circ b^{(\ell)}\circ w_{a,b}\circ w \equiv b'\circ a\]
where $a^{(\ell)}\circ b^{(\ell)}\circ w_{a,b}$ is a complete block, $a^{(\ell)}$ is the product of the letters $a_i, i\in J_a$, $b^{(\ell)}$ is a product of the letters
$b_i, i\in J_b$, $w_{a,b}$ is a product of the letters $a_i=b_{i\sigma}$ where $i\in I_a$ and $w$ consists of letters from $a^r$ and $b^r$.
\end{Lem}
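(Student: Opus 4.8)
\emph{Plan.} The whole statement can be read off from the single shuffle $b'\circ a\equiv a'\circ b$ furnished by Proposition~\ref{prop:factorisation}, with Proposition~\ref{easy observation} as the main engine. First I would record the matching of letters. Since $b'\circ a$ and $a'\circ b$ are shuffle equivalent reduced words with the same multiset of letters, the letters of $a$ not lying in $a'$ (the subword $a''$) are matched bijectively, via the shuffle, with the letters of $b$ not lying in $b'$ (the subword $b''$); this is exactly the bijection $\theta$ of Lemma~\ref{lem:reduction-3}. With this matching in hand, $I_a,J_a,K_a$ partition $[1,M]$: $J_a$ collects the positions whose letters lie in $a'$, and the remaining positions (those of $a''$) split according to whether the matched letter of $b$ lies in $b^\ell$ (giving $I_a$) or in $b^r$ (giving $K_a$). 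Disjointness is then immediate, as is the analogous partition of $[1,N]$. The map $\sigma\colon I_a\to I_b$ sending $i$ to the position of its matched letter is well defined and bijective, being the restriction of the symmetric matching $a''\leftrightarrow b''$ to pairs whose letters both lie in the complete-block factors; in particular $a_i=b_{i\sigma}$ for $i\in I_a$.

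The heart of the argument is repeated use of the following order-swap principle, which is the forward direction of Proposition~\ref{easy observation} together with Remark~\ref{rem:allimportant}: if two letters occur in one order in $a'\circ b$ and in the opposite order in the shuffle-equivalent $b'\circ a$, then their supports are distinct and adjacent in $\Gamma$. I would use this to show that $a^{(\ell)}\circ b^{(\ell)}\circ w_{a,b}$ (the displayed word, where the misprint $a^{(\ell)}\circ a^{(\ell)}$ should read $a^{(\ell)}\circ b^{(\ell)}$) is a complete block. Adjacency and distinctness of supports among the letters indexed by $J_a\cup I_a$ are free, since all come from the complete block $a^\ell$; likewise those indexed by $J_b\cup I_b$ come from $b^\ell$. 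The only cross-adjacencies to check are between a letter $a_i$ ($i\in J_a$, at the front of $a'\circ b$) and a letter $b_j$ ($j\in J_b$, at the front of $b'\circ a$): in $a'\circ b$ we have $a_i$ before $b_j$, whereas in $b'\circ a$ we have $b_j$ before $a_i$, so their orders are swapped and the principle gives $\supp(a_i)\neq\supp(b_j)$ with $(\supp(a_i),\supp(b_j))\in E$. Hence the word is reduced with complete support, i.e. a complete block.

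Next I would show this complete block shuffles to the front of $a'\circ b$, the remaining letters forming the tail $w$. By Corollary~\ref{cor:firstblock} it suffices to check that each block letter is adjacent to every letter preceding it in $a'\circ b$. The block letters from $a'$ (namely $a^{(\ell)}$) sit at the very front, so nothing is needed. A block letter $b_j$ realised inside $b^\ell$ is preceded only by the letters of $a'$ and by earlier letters of $b^\ell$; adjacency to the latter is free, adjacency to $a^{(\ell)}$ was just established, and adjacency to the remaining part $a^{(r)}$ of $a'$ follows once more from the order-swap principle, since each letter of $a^{(r)}$ precedes $b_j$ in $a'\circ b$ but follows it in $b'\circ a$ (there $b_j$ is realised in $b'$ or, via $\sigma$, inside $a^\ell$, in both cases ahead of $a^r$). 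Being pairwise commuting and each commuting past all earlier non-block letters, these letters may be brought simultaneously to the front in the order $a^{(\ell)}\circ b^{(\ell)}\circ w_{a,b}$, with the leftover letters forming $w$: the subword $a^{(r)}$ of $a^r$, the letters of $b^\ell$ indexed by $K_b$, and all of $b^r$. Finally I would note that the $K_b$-letters, though positionally in $b^\ell$, equal letters of $a^r$ under the matching, and the letters realising $K_a$ are letters of $b^r$; so every letter of $w$ is, as a value, a letter of $a^r$ or of $b^r$, which is the intended reading of the conclusion.

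The main obstacle is the careful bookkeeping of the order-swap applications: one must verify that for every block letter and every preceding non-block letter the two genuinely reverse order between $a'\circ b$ and $b'\circ a$, which demands keeping precise track of where each letter of $w_{a,b}$ is realised (inside $b^\ell$ on the $a'\circ b$ side, but inside $a^\ell$ on the $b'\circ a$ side). The related delicate point is the identification of $w$: the sets $K_a,K_b$ need not be empty, and the statement that $w$ is built from letters of $a^r$ and $b^r$ must be read at the level of letter values, using the matching $a''\leftrightarrow b''$ to reinterpret the $b^\ell$-positioned $K_b$-letters as letters of $a^r$. Everything else is routine once the order-swap principle and the complete-block structure of $a^\ell$ and $b^\ell$ are in place.
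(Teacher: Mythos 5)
Your proof is correct and takes essentially the same route as the paper's: both arguments show that the letters indexed by $J_a$, $J_b$ and $I_a$ commute past everything that precedes them by playing the two orderings $a'\circ b$ and $b'\circ a$ against Proposition~\ref{easy observation} (the order-swap principle), so that these letters all lie in the first, complete, block in left Foata normal form, the remaining letters forming $w$. You are simply more explicit than the paper about the pairwise-adjacency check for completeness, the bijectivity of $\sigma$, and the reinterpretation of the $K_b$-letters as letters of $a^r$, and you correctly spot that the displayed $a^{(\ell)}\circ a^{(\ell)}$ in the statement is a misprint for $a^{(\ell)}\circ b^{(\ell)}$.
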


\begin{proof} By definition, the letters $a_i,i\in J_a$ and  $b_j, j\in J_b$ shuffle to the front of $a'\circ b$ and $b'\circ a$ and as
$a'\circ b\equiv b'\circ a$ they are in the
 first block of $a'\circ b$ in left Foata normal form. Consider now a letter $a_i$ where $i\in I_{a}$ and suppose that $a_i$ shuffles to $b_{i\sigma}$ where $i\sigma\in I_b$. The letter $b_{i\sigma}$  precedes or shuffles with any letter in the subword
 $b_{i_1}\circ \cdots \circ b_{i_k}$ of $b$  and hence also  in  $a'\circ b$. But in the shuffle equivalent word
 $b'\circ a$, each such $b_{i_j}$ precedes $a_i=b_{i\sigma}$. It follows that $a_i$ shuffles to the front of the subword  $b_{i_1}\circ \cdots \circ b_{i_k}$ in
 $b'\circ a$ as required. 
 
Consider now the remaining letters  of $a'\circ b\equiv a^{(\ell)}\circ a^{(r)}\circ b^\ell\circ b^r$. Letters of $a^{(r)}$ are letters from $a^r$; letters
 from $b^{\ell}$ either shuffle into $b^{(\ell)}=w_b$ or to $a^\ell$ and hence into $w_{a,b}$ or into $a^r$. This concludes the proof.
\end{proof}

\section{Left ideal Howson}\label{sec:leftidealH}

Recall that a monoid $M$ is said to be {\it left ideal Howson} if the intersection of any two finitely generated left ideals, or, equivalently, any two principal left ideals, is again finitely generated. This term was introduced in honour of the author of \cite{howson:1954},
 who showed that the intersection of finitely generated subgroups of free groups is finitely generated.
By very definition, a weakly left noetherian monoid is left ideal Howson. The converse is not true, as may be seen by considering
 the natural numbers under the operation of maximum, with an identity adjoined. The left ideal Howson property has been extensively considered for other classes of semigroups, (see \cite{jones:1989,jones:2016,lawson:2019, silva:2016}). Further, it was shown in \cite{carson:2021} that the class of left ideal Howson monoids is closed under taking both free products and direct products.
The aim of this section is to show that the class of left ideal Howson monoids is closed under taking graph products.

\begin{Thm}\label{thm:howson}
A graph product of monoids is left ideal Howson if and only if each vertex monoid is left ideal Howson.
\end{Thm}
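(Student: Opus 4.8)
The plan is to prove the two implications separately, the forward (`only if') direction being a short retract argument and the converse carrying essentially all of the work. For the forward direction I would use that each vertex monoid $M_\alpha$ is a retract of $\mathscr{GP}$ (Remark~\ref{rem:freeprod}) together with the fact that the left ideal Howson property passes to retracts. To see the latter, let $\theta\colon S\to T$ be a retraction and $a,b\in T$. Since $S$ is left ideal Howson, $Sa\cap Sb$ is generated by finitely many elements $g_1,\dots,g_n$; writing $g_i=s_ia=s_i'b$ and applying $\theta$ gives $g_i\theta=(s_i\theta)a=(s_i'\theta)b\in Ta\cap Tb$. If $c\in Ta\cap Tb\subseteq Sa\cap Sb$ then $c=sg_i$ for some $s\in S$ and some $i$, whence $c=c\theta=(s\theta)(g_i\theta)\in T(g_i\theta)$; thus $\{g_i\theta\}$ generates $Ta\cap Tb$, and $T$ is left ideal Howson.

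For the converse, assume every vertex monoid is left ideal Howson and fix $[u],[v]\in\mathscr{GP}$. By Lemma~\ref{key5} I may replace $u,v$ by reduced words $a,b$ having no left invertible letters that shuffle to the front, without altering the principal left ideals, so the task is to show $\mathscr{GP}[a]\cap\mathscr{GP}[b]$ is finitely generated. I would analyse an arbitrary element $[z]=[s][a]=[t][b]$ as follows. By Corollary~\ref{cor:reduction} the products $s\circ a$ and $t\circ b$ reduce, using glueing moves only, to reduced words $s'\circ c$ and $t'\circ d$, where $c$ and $d$ have the same support skeletons as $a$ and $b$. Applying Proposition~\ref{prop:factorisation} to $[s'\circ c]=[t'\circ d]$ produces subwords $c'$ of $c$ and $d'$ of $d$ together with a word $w$ satisfying $[c'\circ d]=[d'\circ c]$ and $[z]=[w][c'\circ d]$; in particular $[c'\circ d]$ is a common left multiple of $a$ and $b$ and $[z]$ is a $\mathscr{GP}$-multiple of it.

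The key organising principle is that the combinatorial shape of this factorisation is controlled by finitely many data. The reduction is governed by a reduction function for $a$ and one for $b$ (Definition~\ref{defn:ind}), and there are only finitely many of these; each such pair fixes the support skeletons of $c,d$, and hence, by the uniqueness in Proposition~\ref{prop:factorisation} together with the stability recorded in Remark~\ref{one reduction-2} and Lemma~\ref{one reduction-1}, fixes the positions of the subwords $c',d'$ independently of $z$. Within a fixed pattern the only freedom in $[c'\circ d]$ lies in the values of the glued letters $s_ia_{i\theta}$ and $t_jb_{j\theta}$. Using the double-shuffle description of Lemma~\ref{lem:thedoubleshuffle} I would identify exactly which of these letters are forced to coincide for the equality $[c'\circ d]=[d'\circ c]$ to hold; each such coincidence is an equation $x\,a_{i\theta}=y\,b_{j\theta}$ inside a single vertex monoid $M_\gamma$, that is, a point of $M_\gamma a_{i\theta}\cap M_\gamma b_{j\theta}$. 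Since $M_\gamma$ is left ideal Howson, each such intersection is finitely generated, and by Lemma~\ref{key7} the associated principal left ideals of $\mathscr{GP}$ are likewise finitely controlled.

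Putting these together, for each of the finitely many patterns I would take the finitely many tuples of generators of the relevant vertex intersections, assemble from them a finite list of common left multiples $g_1,\dots,g_r$, and argue that every $[z]=[w][c'\circ d]$ is a left $\mathscr{GP}$-multiple of some $g_j$; Lemma~\ref{lem:reduction-3} guarantees that replacing the glued letters by generators of the vertex intersections keeps the factorisation pattern valid, so that each $g_j$ genuinely lies in $\mathscr{GP}[a]\cap\mathscr{GP}[b]$. This would yield $\mathscr{GP}[a]\cap\mathscr{GP}[b]=\bigcup_{j}\mathscr{GP}[g_j]$, a finite union of principal left ideals, as required. I expect the main obstacle to be precisely this last bookkeeping step: tracking, through the two simultaneous reductions and the double shuffle, exactly which glued letters must match, and proving that the minimal common multiples arising in the vertex monoids lift to a finite generating set for the intersection in $\mathscr{GP}$, rather than merely to a sufficient supply of elements of it.
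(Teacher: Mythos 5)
Your proposal follows essentially the same route as the paper's proof: necessity by the retract argument, and sufficiency via normalisation (Lemma~\ref{key5}), glueing-only reduction (Corollary~\ref{cor:reduction}), the unique factorisation of Proposition~\ref{prop:factorisation} with its skeleton-stability (Lemma~\ref{one reduction-1}, Remark~\ref{one reduction-2}), the double-shuffle analysis of Lemma~\ref{lem:thedoubleshuffle} reducing the matched glued letters to intersections $M_\gamma a_i \cap M_\gamma b_j$ inside single vertex monoids, and the assembly of a finite generating set whose membership in $\mathscr{GP}[a]\cap\mathscr{GP}[b]$ is justified by Lemma~\ref{lem:reduction-3}. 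The only cosmetic difference is that you index generators over finitely many reduction-function ``patterns'', whereas the paper notes that, since no deletion moves occur, the support skeletons of $c,d$ and hence the positions of $c',d'$ are unique and independent of $[z]$, so a single pattern suffices; your weaker finiteness claim still delivers finite generation.
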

\begin{proof} Let $\mathscr{GP}=\mathscr{GP}(\Gamma,\mathcal{M})$.
Suppose that each vertex monoid  is left ideal Howson. As remarked, to show that  $\mathscr{GP}$ is left ideal Howson, it is enough to show that the intersection of two principal left ideals is finitely generated. To this end, consider $\mathscr{GP}[a]$ and $\mathscr{GP}[b]$
where $a=a_1\circ \cdots \circ a_m$ and $ b= b_1\circ\cdots \circ b_n\in X^*$ are reduced.  By Lemma \ref{key5}, we can assume  that the left Foata normal forms of $a$ and $b$ are standard, that is, that there are no left invertible letters in $a$ (respectively $b$) that can be shuffled to the front of $a$ (respectively $b$).

We  show that $\mathscr{I}:=\mathscr{GP}[a]\cap \mathscr{GP}[b]$ is finitely generated. By convention, this is true if
$\mathscr{I}=\emptyset$. Suppose therefore that $\mathscr{I}\neq \emptyset $ and  let
$[z]\in \mathscr{I}$. Let ${s}$ and  ${t}$ be reduced words such that $[z]=[{s}][a]=[{t}][b].$

By
Corollary~\ref{cor:reduction}  and the subsequent comment,  either ${s}\circ a$ is reduced or reduces in $M$ glueing moves, where $M\leq m$, giving rise to a (re)labelling of
the words  ${s}$ and $a$ such that:
\begin{enumerate} \item $\supp(s_i)=\supp(a_{i})$ for $1\leq i\leq M$;
\item ${s}$ shuffles to $s'\circ s_{1}\circ \cdots \circ s_{M}$;
\item $a$ shuffles to  $ a_{1}\circ \cdots \circ a_{M}\circ a^M$;
\item ${s}\circ a$ reduces to the reduced word $s'\circ s_{1}a_{1}\circ \cdots \circ s_{M}a_{M}\circ a^M$;
\item $s_{1}a_{1}\circ \cdots \circ s_{M}a_{M}$ is a complete block.
\end{enumerate}

Similarly, we have a word  $t$ such that $t\circ b$ is reduced or  reduces in $N$ glueing moves, where $N$ is the length of the first block of $b$ in left Foata normal form, giving  rise  to a (re)labelling of
the words  $t$ and $b$ such that:

\begin{enumerate} \item $\supp(t_i)=\supp(b_i)$ for $1\leq i\leq N$;
\item $t$ shuffles to $t'\circ t_{1}\circ \cdots \circ t_{N}$;
\item $b$ shuffles to $ b_{1}\circ \cdots \circ b_{N} \circ b^N$;
\item $t\circ b$ reduces to the reduced word $t'\circ t_{1}b_{1}\circ \cdots \circ t_{N}b_{N}\circ b^N$;
\item $ t_{1}b_{1}\circ \cdots \circ t_{N}b_{N}$ is a complete block.
\end{enumerate}

We now have
\[[s\circ a]=[s'\circ c]=[t'\circ d]=[t\circ b]\] where
\[c =s_{1}a_{1}\circ \cdots \circ s_{M}a_{M}\circ a^M
\mbox{ and }
d= t_{1}b_{1}\circ \cdots \circ t_{N}b_{N}\circ b^N.\]
Note that $c$ and $d$ have the same support skeletons as $a$ and $b$, respectively.
Let $c'$ and $d'$ be the unique subwords of $c$ and $d$ guaranteed by
Proposition~\ref{prop:factorisation} such that
\[[s']=[u\color{black}\circ d'], [t']=[u\color{black}\circ c']\mbox{ and }[d'\circ c]=[c'\circ d]\]
for some  $u\in X^*$. \color{black}

It follows that $[s\circ a]=[t\circ b]\in \mathscr{GP}[v]$ where $[v]=[d'\circ c]=[c'\circ d]$. Now observe that
\[[v]=[d' \circ s_{1}\circ \cdots \circ s_{M}\circ a]=[c' \circ t_{1}\circ \cdots \circ t_{N}\circ b], \]
so that $[v]\in \mathscr{I}$.

We return  to the reduced and equivalent words  $d'\circ c$ and $c'\circ d$. 
We will employ Lemmas~\ref{lem:reduction-3} and \ref{lem:thedoubleshuffle} with $c$ and $d$ taking the role of $a$ and $b$. To that end, let 
\[c^{\ell}=s_{1}a_{1}\circ \cdots \circ s_{M}a_M,\, c^r=a^M, \, d^{\ell}=t_{1}b_{1}\circ \cdots \circ t_{N}b_N\mbox{ and }d^r=b^{N}.\]
We let $j_1,\cdots, j_l, i_1,\cdots, i_k,\color{black} I_c, I_d, J_c,J_d$, $K_c$ and $K_d$ be defined as in Lemma~\ref{lem:thedoubleshuffle}.
Then 
\[d'\circ c\equiv c^{(\ell)}\circ d^{(\ell)}\circ w_{c,d}\circ w\equiv c'\circ d\]
where $w$ consists of letters of $a^M$ and $b^N$.

Let $a^{(\ell)}$ be the reduced word with the same support as $c^{(\ell)}$, but replacing each $s_ia_i$ by $a_i$; let $b^{(\ell)}$ be defined similarly, and let
$w'=a^{(\ell)}\circ b^{(\ell)}\circ w$. Note that $w'$ consists only of letters of $a$ and $b$.  
Let $\sigma:I_c\rightarrow I_d$ be defined as in Lemma~\ref{lem:thedoubleshuffle} so that  $s_oa_o=t_{o\sigma}b_{o\sigma}$
 and $w_{c,d}$ is the product of letters $s_oa_o$ where $o\in I_c$. \color{black} We now call upon the fact that all the vertex monoids are left ideal Howson, and we let $X_o$
 be a finite set of
generators of the left ideal  $M_oa_o\cap M_{o}b_{o\sigma}$ for all $o\in I_{c}:=\{ o_1,\cdots, o_g\color{black}\}$, where $o=\supp(a_o)$. We claim that $X$ is a finite set of generators
for $\mathscr{I}$, where
\[X=\{ [x_{o_1}\circ\cdots \circ x_{o_g\color{black}}\circ w']:   x_{o_i}\in X_{o_i}, 1\leq i\leq g\color{black}\}.\]

First, we show that $\mathscr{GP}X\subseteq \mathscr{I}$. Let $x=x_{o_1}\circ\cdots \circ x_{o_g\color{black}}\circ w'$ so that $[x]\in X$. For $1\leq i\leq g$ choose $p_{o_i}, q_{o_i\sigma}\in M_{o_i}$ such that
$x_{o_i}=p_{o_i}a_{o_i}=q_{o_i\sigma}b_{o_i\sigma}$.

We form a word $s_1'\circ\cdots \circ s_M'$
with the same support skeleton as $s_1\circ\cdots \circ s_M$ as follows.
If  $o\in I_{c}$ then put $s_o'=p_o$; if $o\in J_c$ we put $s_o'=I_o$, where $s_o\in M_o$; if $o\in K_c$  let $s_o'=s_o$.   Similarly,  we form a word $t_1'\circ\cdots \circ t_N'$
with the same support skeleton as $t_1\circ\cdots \circ t_N$ as follows.
If  $o\sigma\in I_d$ then put $t_{o\sigma}'=q_{o\sigma}$; if $o\in J_d$ we put $t_o'=I_o$, where $t_o\in M_o$; if $o\in K_d$  let $t_o'=t_o$. 
We now define 
\[\bar{c}=s_1'\circ\cdots \circ s_M'\circ a= s_{1}'a_{1}\circ \cdots \circ s_{M}'a_{M}\circ a^M
\]and\[\bar{d}=t_1'\circ\cdots \circ t_N'\circ b=
 t_{1}'b_{1}\circ \cdots \circ t_{N}'b_{N}\circ b^N.\]
 We  call upon Lemma~\ref{lem:reduction-3} to obtain subwords $\bar{c}'=\bar{c}_{j_1}\circ\cdots\circ \bar{c}_{j_l}$ and 
 $\bar{d}'=\bar{d}_{i_1}\circ\cdots\circ \bar{d}_{i_k}$ of $\bar{c}$ and $\bar{d}$ respectively, so that observing that 
 $\bar{c}^{(\ell)}=a^{(\ell)}$ and $\bar{d}^{(\ell)}=b^{(\ell)}$ we have
 \[\bar{d}'\circ \bar{c}\equiv a^{(\ell)}\circ b^{(\ell)}\circ w_{\bar{c},\bar{d}}\circ w\equiv \bar{c}'\circ \bar{d}\equiv w_{\bar{c},\bar{d}} \circ w'=[x]\]
 where $w_{\bar{c},\bar{d}}=x_{o_1}\circ\cdots\circ x_{o_g\color{black}}$. 
This gives that
\[[x]=[ \bar{d}'\circ s_1'\circ\cdots \circ s_M'\circ a]=[\bar{c}'\circ t_1'\circ\cdots \circ t_N'\circ b]\in \mathscr{I}\]
 and, consequently, $\mathscr{GP}X\subseteq\mathscr{I}$. \color{black}

Now we show that $\mathscr{GP}X\supseteq \mathscr{I}$. To this end, let us return to an arbitrary $[z]\in \mathscr{I}$. Then
$[z]=[s\circ a]=[t\circ b]$ where $s$ and $t$ are obtained as at the start of the proof. Note that it was the existence of {\em any} such $[z]$
that determined the support skeleton of $d'\circ c$ and $c'\circ d$; the support skeleton does not depend upon the choice of $z$ and by  Remark \ref{one reduction-2} is unique.
For $o_i\in I_{c}$ we have that $s_{o_i}a_{o_i}=t_{o_i\sigma}b_{o_i\sigma}$ so that $s_{o_i}a_{o_i}=k_{o_i}x_{o_i}$
for some $x_{o_i}\in X_{o_i}$.  Put $x_{o_i}=p_{o_i}a_{o_i}$ as above  for $1\leq i\leq g\color{black}$ and  let
$x=x_{o_1}\circ\cdots \circ x_{o_g\color{black}}\circ w'$ so that $[	x]\in X$. Now list the elements of $J_c$ and $J_d$ respectively as
\[J_c=\{ e_1,\cdots ,e_{h\color{black}}\}\mbox{ and }J_d=\{ f_1,\cdots, f_r\}.\]
It follows that
\[[s\circ a]=[u\color{black}\circ (k_{o_1}\circ\cdots\circ k_{o_g\color{black}} \circ s_{e_1}\circ \cdots \circ s_{e_{h\color{black}}}\circ t_{f_1}\circ\cdots \circ t_{f_r})][x]\in \mathscr{GP}X.\]
This completes the proof that $X$ generates $\mathscr{I}$ and hence the proof that $\mathscr{GP}$ is left ideal Howson.

Conversely, if $\mathscr{GP}$ is left ideal Howson, then so is each vertex monoid, since the class of left ideal Howson monoids is closed under retract \cite{dasar:2024}.
\end{proof}

We recall  that a monoid $M$  is an {\em LCM} monoid if for any $a,b\in M$ we have $Ma\cap Mb=\emptyset$ or $Ma\cap Mb=Mc$ for some $c\in M$. Examining the choice of elements of $X$ in the proof of Theorem~\ref{thm:howson} gives one direction of the  following, the other due to an easy consideration of retracts;  it appears in \cite{fountain:2009} in the case where the vertex monoids are right cancellative  LCM monoids, and hence embed into their inverse hulls.

\begin{Cor}
A graph product of monoids is an  LCM monoid if and only if each vertex monoid is an LCM monoid.
\end{Cor}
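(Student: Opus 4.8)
The plan is to extract the LCM property directly from the generating set constructed in the proof of Theorem~\ref{thm:howson}, and to handle the converse by showing that the LCM property is inherited by retracts.

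For the forward implication, suppose every vertex monoid is an LCM monoid, and take $[g],[h]\in\mathscr{GP}$. By Lemma~\ref{key5} we may replace $g,h$ by reduced words $a,b$ that have no left invertible letters shuffling to the front and satisfy $\mathscr{GP}[g]=\mathscr{GP}[a]$ and $\mathscr{GP}[h]=\mathscr{GP}[b]$, so it suffices to show that $\mathscr{I}:=\mathscr{GP}[a]\cap\mathscr{GP}[b]$ is empty or principal. If $\mathscr{I}=\emptyset$ there is nothing to prove. Otherwise, the proof of Theorem~\ref{thm:howson} produces the generating set
\[X=\{[x_{o_1}\circ\cdots\circ x_{o_s}\circ w']:x_o\in X_o\}\]
for $\mathscr{I}$, in which each $X_o$ is a finite generating set for the intersection $M_oa_o\cap M_ob_{o\sigma}$ of two principal left ideals inside the vertex monoid $M_o$. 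Since $\mathscr{I}\neq\emptyset$, the relation $s_{o_i}a_{o_i}=t_{o_i\sigma}b_{o_i\sigma}$ appearing in that proof exhibits an element of $M_{o_i}a_{o_i}\cap M_{o_i}b_{o_i\sigma}$, so each of these vertex intersections is non-empty. As $M_o$ is an LCM monoid, each intersection is therefore principal, and we may take every $X_o=\{x_o\}$ to be a singleton. The set $X$ then collapses to the single element $[x_{o_1}\circ\cdots\circ x_{o_s}\circ w']$, and the conclusion of Theorem~\ref{thm:howson} gives that $\mathscr{I}$ is the principal left ideal it generates. Hence $\mathscr{GP}$ is an LCM monoid.

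For the converse I would prove that the LCM property passes to retracts and apply this to the retraction of $\mathscr{GP}$ onto each $M_\alpha$ from Remark~\ref{rem:freeprod}. Let $T$ be a retract of an LCM monoid $S$ via a morphism $\theta$ fixing $T$ pointwise, and fix $a,b\in T$. Since $Ta\subseteq Sa$ and $Tb\subseteq Sb$, we have $Ta\cap Tb\subseteq Sa\cap Sb$, so if $Sa\cap Sb=\emptyset$ then $Ta\cap Tb=\emptyset$. Otherwise $Sa\cap Sb=Sc$ for some $c\in S$; writing $c=s_1a=s_2b$ and applying $\theta$ yields $d:=c\theta=(s_1\theta)a=(s_2\theta)b\in Ta\cap Tb$. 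For an arbitrary $x\in Ta\cap Tb\subseteq Sc$ we write $x=sc$ and apply $\theta$, using $x\theta=x$, to obtain $x=(s\theta)d\in Td$; thus $Ta\cap Tb=Td$ and $T$ is an LCM monoid. Applying this with $S=\mathscr{GP}$ and $T=M_\alpha$ finishes the converse.

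The main obstacle is conceptual rather than computational: the forward direction is really a re-reading of the proof of Theorem~\ref{thm:howson}, and the crux is to notice that the non-emptiness of $\mathscr{I}$ forces every vertex-level intersection $M_oa_o\cap M_ob_{o\sigma}$ to be non-empty, so that the LCM hypothesis upgrades each finite set $X_o$ to a single generator and collapses the generating set $X$ to one element. Once this observation is in place both implications are routine, the retract argument carrying no difficulty beyond bookkeeping with the homomorphism $\theta$.
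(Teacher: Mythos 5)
Your proposal is correct and takes essentially the same route as the paper: the authors prove the forward direction precisely by observing that when each vertex monoid is LCM, the sets $X_o$ in the proof of Theorem~\ref{thm:howson} may be chosen as singletons (the non-emptiness of $\mathscr{I}$ guaranteeing, via $s_{o_i}a_{o_i}=t_{o_i\sigma}b_{o_i\sigma}$, that each vertex intersection is non-empty and hence principal), so that $X$ collapses to one generator, and they dispose of the converse by exactly the retract argument you spell out. Your write-up merely makes explicit the details the paper leaves as ``an easy consideration of retracts.''
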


Some texts (e.g. \cite{exel:2018}) use similar terminology to LCM for the condition where $Ma\cap Mb=\emptyset$ is not permitted; we say here that a monoid $M$ is a {\em strong LCM} monoid if for any $a,b\in M$ we have  $Ma\cap Mb=Mc$ for some $c\in M$.  If a graph product $\mathscr{GP}$ is a strong LCM monoid, then again considering retracts, each vertex monoid has the same property. On the other hand, if $M_{\alpha}$ and $M_{\beta}$ are non-group vertex monoids with the strong LCM property,
and $(\alpha,\beta)\notin E$, then it is easy to see that $\mathscr{GP}[a]\cap \mathscr{GP}[b]=\emptyset$ for any non-units $a\in M_{\alpha},b\in M_{\beta}$.

\begin{Cor}\label{cor:rih} cf. \cite{carson:2021} A free product or a restricted direct product of monoids $\{ M_i:i\in I\}$ is left ideal Howson if and only if so is each $M_i, i\in I$.
\end{Cor}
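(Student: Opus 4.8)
The plan is to obtain this as a direct specialisation of Theorem~\ref{thm:howson}. The crucial point is that both constructions named in the statement are already instances of graph products: by Remark~\ref{rem:freeprod}, the free product of $\{M_i : i\in I\}$ is $\mathscr{GP}(\Gamma,\mathcal{M})$ for the graph $\Gamma=(I,\emptyset)$, and the restricted direct product of $\{M_i : i\in I\}$ is $\mathscr{GP}(\Gamma,\mathcal{M})$ for the graph $\Gamma=(I,I\times I)$. In each case the vertex monoids are exactly the $M_i$, so once the graph product is recognised, Theorem~\ref{thm:howson} yields the equivalence verbatim.

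Before invoking the theorem I would dispose of the standing convention that every vertex monoid is non-trivial. If some of the $M_i$ are trivial, I would observe that a trivial monoid possesses only one left ideal and is therefore (vacuously) left ideal Howson, so its presence does not affect the ``only if'' direction. Moreover, by the penultimate bullet of Remark~\ref{rem:freeprod}, deleting the trivial vertices produces an isomorphic graph product. Hence we may assume without loss of generality that every $M_i$ is non-trivial, which is precisely the hypothesis under which Theorem~\ref{thm:howson} is stated. Note that $I$ is allowed to be infinite, and this causes no difficulty, since Theorem~\ref{thm:howson} nowhere requires $\Gamma$ to be finite.

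With these reductions in hand, the two halves follow at once: applying Theorem~\ref{thm:howson} to $\Gamma=(I,\emptyset)$ gives that the free product is left ideal Howson if and only if each $M_i$ is, and applying it to $\Gamma=(I,I\times I)$ gives the same for the restricted direct product. I do not expect any genuine obstacle here; the only bookkeeping to watch is the handling of trivial vertex monoids and the admission of an arbitrary (possibly infinite) index set, both addressed above. Consequently no separate argument is needed, and the corollary is immediate.
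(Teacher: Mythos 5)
Your proposal is correct and is precisely the paper's intended argument: the corollary is stated without separate proof as an immediate specialisation of Theorem~\ref{thm:howson}, using the identifications from Remark~\ref{rem:freeprod} of free products and restricted direct products as graph products over $(I,\emptyset)$ and $(I,I\times I)$ respectively. Your explicit handling of trivial vertex monoids and of an infinite index set is sound and, if anything, slightly more careful than the paper, which leaves these points tacit.
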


\section{Finitely left equated}\label{sec:L}

A {\em left congruence} on a monoid $M$ is an equivalence relation closed under multiplication by elements of $M$ on the left. We will denote the smallest left congruence containing a subset $X$ of $M\times M$ by $\lambda_X$.  For any $a\in M$ it is easily seen that
the left annihilator  
$$\mathbf{l}(a):=\{(s, t)\in M\times M: sa=ta\}$$ 
is a left congruence.  We say that $M$ is {\em finitely left equated} if  
$\mathbf{l}(a)$  is finitely generated for all $a\in M$.  Notice that this property is much weaker than being left noetherian. For instance, it was proved in \cite[Corollary 4.8]{dasar:2024} that every regular monoid is finitely left equated. Regular monoids need not even be weakly  left noetherian, as the example of the natural numbers under maximum, with an identity adjoined, demonstrates. The aim of this section is to prove the following theorem.

\begin{Thm}\label{thm:aagh} A graph product of monoids $\mathscr{GP}=\mathscr{GP}(\Gamma,\mathcal{M})$ is finitely left equated if and only if so is each vertex monoid.
\end{Thm}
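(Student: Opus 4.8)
The plan is to prove both directions, the forward (necessity) one being routine and the converse (sufficiency) one requiring the full machinery of Section~\ref{sec:red}. For necessity, since each vertex monoid $M_\alpha$ is a retract of $\mathscr{GP}$ (Remark~\ref{rem:freeprod}), and the class of finitely left equated monoids is closed under retract (a standard fact: if $\theta:S\to T$ is a retraction and $(s,t)$ generate $\mathbf{l}_S(a)$ for $a\in T$, their images generate $\mathbf{l}_T(a)$), the property passes down from $\mathscr{GP}$ to each $M_\alpha$. This direction I would dispatch in one or two sentences.

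For sufficiency, suppose each vertex monoid is finitely left equated and fix $[a]\in\mathscr{GP}$, with $a=a_1\circ\cdots\circ a_n\in X^*$ reduced; I must show $\mathbf{l}([a])$ is finitely generated as a left congruence. The key observation is that $(s,t)\in\mathbf{l}([a])$ precisely when $[s\circ a]=[t\circ a]$, i.e. $s\circ a$ and $t\circ a$ reduce to equal reduced words. First I would reduce to the case where $a$ has no left invertible letters shuffling to the front: if $[a]=[p][a^\flat]$ with $[p]$ left invertible and $a^\flat$ standard (as in Lemma~\ref{key5}), then $\mathbf{l}([a])=\mathbf{l}([a^\flat])$ since right multiplication by a left invertible element is injective, so it suffices to treat standard $a$. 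Then I would apply Corollary~\ref{cor:reduction} to both products $s\circ a$ and $t\circ a$, obtaining reduced forms $s'\circ c$ and $t'\circ e$ where $c,e$ share the support skeleton of $a$; the equality $[s\circ a]=[t\circ a]$ forces, via Theorem~\ref{shuffle} and Proposition~\ref{prop:factorisation} together with its support-skeleton refinements (Lemma~\ref{one reduction-1}, Lemma~\ref{lem:reduction-3}, Remark~\ref{one reduction-2}), a rigid combinatorial matching: the two products glue using the same reduction functions, and the condition on $(s,t)$ decomposes into finitely many \emph{local} conditions, one per glueing position, each of the form $s_o a_o = t_o a_o$ in the vertex monoid $M_{\supp(a_o)}$, plus agreement of the leftover letters.

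The heart of the argument is therefore to show that $\mathbf{l}([a])$ is generated by a finite set assembled from the finitely many reduction functions $\theta$ for $a$ (finite by the remark after Definition~\ref{defn:ind}) together with, for each vertex $\alpha\in\supp(a)$ and each relevant occurrence of a letter with support $\alpha$, a finite generating set of the vertex-level left annihilator $\mathbf{l}_{M_\alpha}(a_i)$, which exists by hypothesis. Concretely, for each way the products can glue (encoded by the pair of reduction functions, matched as in Remark~\ref{one reduction-2}), and each generator $(p,q)$ of the appropriate $\mathbf{l}_{M_\alpha}(a_i)$, I would build a pair of words in $X^*$ that differ only by substituting $p$ for $q$ in the $\alpha$-slot while leaving all other letters (and the shuffling pattern) fixed; this pair lies in $\mathbf{l}([a])$. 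I then claim this finite collection generates $\mathbf{l}([a])$ as a left congruence: given arbitrary $(s,t)\in\mathbf{l}([a])$, I peel off the glued blocks one letter at a time, at each stage left-multiplying by a suitable element of $\mathscr{GP}$ to line up with a generating pair, using Lemma~\ref{block} to pass between the full products and their block factors.

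The main obstacle will be the bookkeeping in this final generation step: I must verify that the decomposition of $(s,t)$ into vertex-level data is both well-defined (independent of the choice of reduced representatives, which is where the uniqueness in Proposition~\ref{prop:factorisation} and the skeleton-invariance of Remark~\ref{one reduction-2} are essential) and \emph{reconstructible} by left-congruence moves from my finite generating set. The delicate point is that left congruences only permit multiplication on the left, so I cannot freely manipulate the right-hand part $a$; I must ensure that each elementary move replaces a single vertex-letter in the $s$-part (or $t$-part) by a congruent one without disturbing the glueing pattern, and that the sequence of such moves, interleaved with left multiplications, connects any $(s,t)\in\mathbf{l}([a])$ to the diagonal. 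Managing the interaction between the shuffling combinatorics and the one-sided nature of the congruence, and confirming that deletion moves (where a letter and its partner multiply to an identity) contribute no further generators beyond those already recorded, is where the real care is needed.
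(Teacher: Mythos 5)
Your forward direction is fine, and your overall plan for sufficiency (reduce both products $s\circ a$ and $t\circ a$ via the Section~\ref{sec:red} machinery, factorise via Proposition~\ref{prop:factorisation}, and assemble generators from the finitely many reduction functions together with vertex-level generating sets of $\mathbf{l}(a_i)$) is the same architecture as the paper's. But your opening reduction contains a concretely false step. You claim $\mathbf{l}([a])=\mathbf{l}([a^\flat])$ when $[a]=[p][a^\flat]$ with $[p]$ left invertible, ``since right multiplication by a left invertible element is injective.'' Neither assertion holds in a general monoid. Take a single vertex whose monoid is the bicyclic monoid $B=\langle b,c\mid bc=1\rangle$ and $a=c$: then $[c]$ is left invertible (with left inverse $[b]$), so $a^\flat=\epsilon$ and $\mathbf{l}([a^\flat])$ is the diagonal; yet $(cb,1)\in\mathbf{l}([c])$ because $(cb)c=c=1\cdot c$, while $cb\neq 1$ --- and this same equation shows right translation by the left invertible $c$ is not injective. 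This is exactly the obstruction the paper flags at the start of its proof (``we cannot assume that $a$ has no left invertible elements in its first block\dots''), and it is why the paper's argument carries deletion moves, the fixed left inverses $a_i'$, and the generator families $Y^\theta$ and $T^\theta$ throughout instead of standardising $a$.

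The reduction you want is in fact salvageable, but by a different argument than yours: $(s,t)\in\mathbf{l}([p][a^\flat])$ iff $(s[p],t[p])\in\mathbf{l}([a^\flat])$, so if $\mathbf{l}([a^\flat])=\lambda_X$ with $X$ finite and $[p'][p]=1$, then $\mathbf{l}([a])$ is generated by the finite set $\{(x[p'],y[p']):(x,y)\in X\}\cup\{([p][p'],1)\}$: an $X$-sequence from $s[p]$ to $t[p]$ right-translates by $[p']$ into a sequence from $s[p][p']$ to $t[p][p']$, and the pair $([p][p'],1)$ connects $s$ to $s[p][p']$ and $t$ to $t[p][p']$. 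With that patch your route would actually be somewhat leaner than the paper's, since for standard $a$ only glueing moves occur (Corollary~\ref{cor:reduction}). However, a second gap remains in your generation step: your description of the local data as pairs $s_oa_o=t_oa_o$ ``plus agreement of the leftover letters'' misses that the leftover subwords do \emph{not} agree. Letters $s_{i\theta}a_{i\theta}$ of one reduced product may shuffle into the subword $A'$ or $B'$ extracted from the other product, and the resulting pair of genuinely different leftover words must itself be adjoined as a generator --- the paper's pairs $(z^{\theta,\psi}_\theta, z^{\theta,\psi}_\psi)$, one per parallel pair of reduction functions. Verifying that these pairs lie in $\mathbf{l}([a])$ is not automatic: it needs the support-skeleton substitution result (Lemma~\ref{lem:reduction-3}, applied in Lemma~\ref{lem:zz'}), and the final connection of an arbitrary $(s,t)$ to the generators requires the careful chain computation of Lemma~\ref{lem:finally}, with Corollary~\ref{cor:s_2tofront} controlling how the words $s_2,t_2$ commute past $B'$ and $A'$. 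These verifications, which are the hardest part of the paper's proof, have no counterpart in your sketch beyond the acknowledgement that bookkeeping is needed.
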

\begin{proof}  The forward direction follows from 
\cite{dasar:2024}, which tells us the fact that the class of finitely left equated monoids is closed under retract.

We prove the reverse direction   by  examining the left annihilators of elements of $\mathscr{GP}$.
For the remainder of this  section we consider  $\mathscr{GP}=\mathscr{GP}(\Gamma,\mathcal{M})$, where each vertex monoid is finitely left equated. We choose and fix
 $[a]\in \mathscr{GP}$ where $a=a_1\circ\dots\circ a_m$ is  reduced. Our aim is to find a finite set of generators for $\mathbf{l}([a])$. Unlike the situation in Section ~\ref{sec:leftidealH}, we cannot assume that $a$ has no left invertible elements in its first block in left Foata normal form, which adds to the complications we encounter.

For each left invertible letter $a_i$ of $a$, we choose and fix a left inverse $a_i'$ so that $a_i'a_i=1_{\mathbf{s}(a_i)}$  in $M_{\mathbf{s}(a_i)}$ and 
$[a_i'][a_i]=[a_i'\circ a_i]=[\epsilon]=1$ in $\mathscr{GP}$. Note that we may not need to call upon all such elements.  For each $1\leq i\leq m$ we can pick a  set of generators $X_i$  of the left annihilator $\mathbf{l}(a_i)$ in the vertex monoid $M_{\mathbf{s}(a_i)}$, that is,  $\lambda_{X_i}=\mathbf{l}(a_i)$. Again we note that we may not need to call upon all sets $X_i$. 

Let $\theta=\theta(h,k,I,J)$ be a reduction function for $a$ with labelling $I=\{ i_\ell: 1\leq \ell\leq h\}$ (where $I$ labels the glueing moves and $1\leq i_1<\dots <i_h\leq M$) and $J=\{ j_o: 1\leq o\leq k\}$
(where $J$ labels the deletion moves and $1\leq j_1<\dots <j_k\leq M$).
We  let
\[Y^{\theta}=\{ ([p\circ a_{j_{l-1}\theta}'\circ\ \dots \circ a_{j_1\theta}'], [q\circ a_{j_{l-1}\theta}'\circ\ \dots \circ a_{j_1\theta}']): (p, q)\in X_{j_l\theta}, 1\leq l\leq k\}.\]

\begin{Lem}\label{lem:craigtrick} With notation as above we have $Y^{\theta}\subseteq \mathbf{l}([a])$. Further, 
for any  left inverses $s_{j_l}$ of $a_{j_l}$, where $1\leq l\leq k$,  we have
\[ [s_{j_k\theta}\circ \dots\circ s_{j_1\theta}]\, \, \lambda_{Y^{\theta}}\, \, [a'_{j_k\theta}\circ \dots\circ a'_{j_1\theta}].\]
\end{Lem}

\begin{proof} Since $\theta$ is a reduction function for $a$, we have $$[a]=[a_{j_1\theta}\circ\cdots\circ a_{j_k\theta}\circ a_{i_h\theta}\circ\cdots\circ a_{i_1\theta}\circ a^M].$$
Let $$([p\circ a_{j_{l-1}\theta}'\circ\ \dots \circ a_{j_1\theta}'], [q\circ a_{j_{l-1}\theta}'\circ\ \dots \circ a_{j_1\theta}'])\in Y^\theta.$$ Then
\[\begin{array}{rcl}p\circ a_{j_{l-1}\theta}'\circ \cdots \circ a_{j_1\theta}'\circ a
&\equiv &(pa_{j_l\theta})\circ a_{j_{l+1}\theta}\circ\cdots\circ a_{j_k\theta}\circ a_{i_h\theta}\circ\cdots\circ a_{i_1\theta}\circ a^M\\
&\equiv & (qa_{j_l\theta})\circ a_{j_{l+1}\theta}\circ\cdots\circ a_{j_k\theta}\circ a_{i_h\theta}\circ\cdots\circ a_{i_1\theta}\circ a^M\\
&\equiv &q\circ a_{j_{l-1}\theta}'\circ \cdots \circ a_{j_1\theta}'\circ a\end{array}\]
using the fact that  $pa_{j_l\theta}=qa_{j_l\theta}$.  Hence $Y^\theta\subseteq  \mathbf{l}([a])$.

We show by finite induction on $l$ with $1\leq l\leq k$ that  $[s_{j_l\theta}\circ \dots\circ s_{j_1\theta}]\, \, \lambda_{Y^\theta} \, \, [  a'_{j_l\theta}\circ \dots\circ a'_{j_1\theta}]$. Notice that for any such $l$ we have that $s_{j_l\theta}a_{j_l\theta}=a'_{j_l\theta}a_{j_l\theta}=1_{\mathbf{s}(a_{j_l\theta})}$ so that
$(s_{j_l\theta},a'_{j_l\theta})\in  \mathbf{l}(a_{j_l\theta})$.

Suppose that  $l=1$. Then, since $X_{j_1\theta}\subseteq Y^\theta$ and 
$(s_{j_l\theta},a'_{j_l\theta})\in  \mathbf{l}(a_{j_1\theta})$, we have
that $s_{j_1\theta}\, \lambda_{X_{j_1\theta}} \, a'_{j_1\theta}$ in $M_{\alpha_1}$, where $\supp(s_{j_1\theta})=\alpha_1$, and so
$[s_{j_1\theta}]\, \lambda_{Y^\theta} \, [a'_{j_1\theta}]$ in $\mathscr{GP}$.

Suppose for induction that
\[[s_{j_{l-1}\theta}\circ \dots\circ s_{j_1\theta}]\, \, \lambda_{Y^{\theta}} \, \, [  a'_{j_{l-1}\theta}\circ \dots\circ a'_{j_1\theta}].\]
There exists an  $X_{j_l\theta}$-sequence
\[ s_{j_l\theta}=c_1p_1,\, c_1q_1=c_2p_2,\cdots, c_rq_r=a'_{j_l\theta}\]
in $M_{\alpha_l}$, where $\supp( s_{j_l\theta})=M_{\alpha_l}$. It follows that there exists a $Y^\theta$-sequence
\[[s_{j_l\theta}\circ x]=[c_1][p_1\circ x],\, [c_1][q_1\circ x]=[c_2][p_2\circ x],\cdots, [c_r][q_r\circ x]=[a'_{j_l\theta}\circ x]\]
where $[x]=[a'_{j_{l-1}\theta}\circ \cdots \circ a'_{j_{1}\theta}]$ in $\mathscr{GP}$.
The inductive hypothesis now yields
\[[s_{j_l\theta}\circ s_{j_{l-1}\theta}\circ \cdots \circ s_{j_{1}\theta}] \ \lambda_{Y^\theta} \ 
[s_{j_l\theta}\circ   a'_{j_{l-1}\theta}\circ \dots\circ a'_{j_1\theta}] \ \lambda_{Y^\theta} \
[a'_{j_l\theta}\circ a'_{j_{l-1}\theta}\circ \cdots \circ a'_{j_{1}\theta}]\]
in $\mathscr{GP}$.  Finite induction finishes the proof.
\end{proof}

Again, for any reduction function $\theta=\theta(h,k,I,J)$ let 
\[T^{\theta}=\{ ([p\circ a'_{j_k\theta}\circ \dots\circ a'_{j_1\theta}],[q\circ a'_{j_k\theta}\circ \dots\circ a'_{j_1\theta}]): (p,q)\in X_{i_\ell\theta}, 1\leq l\leq h\}.\]
The proof of the following is similar to that of Lemma~\ref{lem:craigtrick}.

\begin{Lem}\label{lem:redfuneasy}\label{lem:red2} With notation as above, we have
\[\{ ([p\circ a'_{j_k\theta}\circ \dots\circ a'_{j_1\theta}],[q\circ a'_{j_k\theta}\circ \dots\circ a'_{j_1\theta}]): (p,q)\in \mathbf{l}(a_{i_{\ell\theta}}), 1\leq l\leq h\} \subseteq \mathbf{l}([a]).\]
Consequently, $T^\theta \subseteq \mathbf{l}([a])$. Further, if $(s_{i_l\theta}, s'_{i_l\theta})\in \mathbf{l}(a_{i_l\theta})$ for $1\leq l\leq h$, then 
\[ [s_{i_l\theta} \circ a'_{j_k\theta}\circ \dots\circ a'_{j_1\theta}] \,  \lambda_{T^\theta}\,[s'_{i_l\theta}\circ a'_{j_k\theta}\circ \dots\circ a'_{j_1\theta}].\]
In particular, if $s_{i_l\theta}a_{i_l\theta}=a_{i_l\theta}$ then 
\[ [s_{i_l\theta} \circ a'_{j_k\theta}\circ \dots\circ a'_{j_1\theta}] \,  \lambda_{T^\theta}\,[a'_{j_k\theta}\circ \dots\circ a'_{j_1\theta}].\]\end{Lem}

We now let \[
Y=\bigcup_{\theta\in \mathscr{R}}Y^{\theta},\, T=\bigcup_{\theta\in \mathscr{R}}T^{\theta}\mbox{ and }P=Y\cup T\]
where $\mathscr{R}=\mathscr{R}(a)$ is the set of all reduction functions for $a$. From Lemmas~\ref{lem:craigtrick} and ~\ref{lem:redfuneasy} we have that
$Y\cup T\subseteq \mathbf{l}([a])$. Note that $P$ is finite.
We will add later a further finite set of pairs from $\mathbf{l}([a])$ to $Y\cup T$, which will depend on parallel reduction functions, and show that the resulting finite set generates $\mathbf{l}([a])$.

We now consider  a fixed pair  $([s],[t])\in \mathbf{l}([a])$, where we assume $s$ and $t$ are reduced. Proposition~\ref{prop:reduction} sets out very precisely how to
reduce the products $s\circ a$ and  $t\circ a$, using reduction functions. The pair of reduction functions used will give us a new generator for $\mathbf{l}([a])$.

We suppose that $s\circ a$ reduces to a reduced word in $M$ reduction moves. From Proposition~\ref{prop:reduction}
we have a reduction function $\theta=\theta(h,k,I,J)$ for $a$ and words $ s^M$ and $a^M$ such that 
$[1,M]=I \cup J$ where $I$ labels the glueing moves and $J$ labels the deletion moves.
We write $I=\{ i_\ell: 1\leq \ell\leq h\}$ where  $1\leq i_1<\dots <i_h\leq M$,  and
$J=\{ j_o: 1\leq o\leq k\}$ where $1\leq j_1<\dots <j_k\leq M$; notice that $M=h+k$ and
$M$ is the greater of $i_h$ and $j_k$.
Further,  $\theta:[1,M]\rightarrow [1,m]$ is an injection such that:
\begin{enumerate}
\item[$\bullet$] $s$ shuffles to $s^M\circ  s_{i_1\theta}\circ \dots \circ s_{i_h\theta} \circ s_{j_k\theta}\circ \dots\circ s_{j_1\theta}$ where $s_{j_o\theta}a_{j_o\theta}=1$ for all $1\leq o\leq k$
and  $s^M$ is the word $s$ with the letters  $s_{ i\theta}$ deleted, $1\leq i\leq M$;
\item[$\bullet$]  $a$ shuffles to $a_{j_1\theta}\circ\cdots\circ a_{j_k\theta}\circ a_{i_h\theta}\circ\cdots\circ a_{i_1\theta}\circ a^M$,
where   $a^M$ is the word $a$ with the letters  $a_{ i\theta}$ deleted, $1\leq i\leq M$;
\item[$\bullet$] $s\circ a\equiv s^M\circ s_{i_1\theta}a_{i_1\theta}\circ\dots\circ s_{i_h\theta}a_{i_h\theta}\circ  a^M$ and the right hand side is reduced;
\item[$\bullet$] $s_{i_1\theta}a_{i_1\theta}\circ\dots\circ s_{i_h\theta}a_{i_h\theta} $ is a complete block.
\end{enumerate}

We suppose that $t\circ a$ reduces to a reduced word in $N$ reduction moves. From Proposition~\ref{prop:reduction}
we have a reduction function $\psi=\psi(p,q,U,V)$ for $a$ and words $t^N$ and $a^N$ such that 
$[1,N]=U \cup V$ where $U$ labels the glueing moves and $V$ labels the deletion moves.
We write $U=\{ u_\ell: 1\leq \ell\leq p\}$ where  $1\leq u_1<\dots <u_p\leq N$,  and
$V=\{ v_o: 1\leq o\leq q\}$ where $1\leq v_1<\dots <v_q\leq N$; notice that  $N=p+q$ and
$N$ is the greater of $u_p$ and $v_q$.
Further,  $\psi:[1,M]\rightarrow [1,m]$ is an injection such that:
\begin{enumerate}
\item[$\bullet$] $t$ shuffles to $t^N\circ  t_{u_1\psi}\circ \dots \circ t_{u_p\psi} \circ t_{v_q\psi}\circ \dots\circ t_{v_1\psi}$ where $t_{u_o\psi}a_{u_o\psi}=1$ for all $1\leq o\leq q$
and  $t^N$ is the word $t$ with the letters  $t_{ i\psi}$ deleted, $1\leq i\leq N$;
\item[$\bullet$]  $a$ shuffles to $a_{u_1\psi}\circ\cdots\circ a_{u_p\psi}\circ a_{v_q\psi}\circ\cdots\circ a_{v_1\psi}\circ a^N$,
where   $a^N$ is the word $a$ with the letters  $a_{ i\psi}$ deleted, $1\leq i\leq N$;
\item[$\bullet$] $t\circ a\equiv t^N\circ t_{u_1\psi}a_{u_1\psi}\circ\dots\circ t_{v_q\psi}a_{v_q\psi}\circ  a^N$ and the right hand side is reduced;
\item[$\bullet$] $t_{u_1\psi}a_{u_1\psi}\circ\dots\circ t_{u_p\psi}a_{u_p\psi} $ is a complete block.
\end{enumerate}

The reader should note here that neither $a^M$ nor $a^N$ need be a subword of the other, although both are subwords of $a$.

We call the pair of reduction functions  $(\theta,\psi)$ a {\em parallel pair} and say that $([s],[t])\in \mathbf{l}([a])$ {\em belongs to} a  {\em  $(\theta,\psi)$ reduction}. Note that there are only finitely many types of parallel pairs in  $\mathbf{l}([a])$, since there
are only finitely many reduction functions for $a$.

Let  \[A=s_{i_1\theta}a_{i_1\theta}\circ\dots\circ s_{i_h\theta}a_{i_h\theta}\circ  a^M
 \mbox{  and }B=t_{u_1\psi}a_{u_1\psi}\circ\dots\circ t_{u_p\psi}a_{u_p\psi}\circ  a^N,\]
 so that \[s^M\circ A\equiv t^N\circ B.\]
From Proposition~\ref{prop:factorisation}  we have that there are subwords $A'$ of $A$ and $B'$ of $B$ and a word $w\in X^*$ such that
\[ s^M\equiv w\circ B', \, B'\circ A \equiv A'\circ B\mbox{ and } t^N\equiv w\circ A'\]
where both $w\circ B'\circ A$ and $w\circ A'\circ B$ are reduced.

From the first and last equivalences above, we obtain that \[([s],[t])=([w\circ \bar{s}],[w\circ \bar{t}])=([w][ \bar{s}],[w][ \bar{t}]),\] where
\[\bar{s}=B'\circ s_{i_1\theta}\circ \dots \circ s_{i_h\theta} \circ s_{j_k\theta}\circ \dots\circ s_{j_1\theta}\]
and
\[ \bar{t}=A' \circ t_{u_1\psi}\circ \dots \circ t_{u_p\psi} \circ t_{v_q\psi}\circ \dots\circ t_{v_1\psi}.\]
Notice that $\bar{s}\circ a\equiv B'\circ A$ and $\bar{t}\circ a\equiv A'\circ B$, and so the second  equivalence yields
\[([\bar{s}],[\bar{t}])\in\mathbf{l}([a]).\]

 Now consider $\bar{s}$ and $\bar{t}$. We have so far
\[\bar{s}\circ a\equiv B'\circ  A\equiv A'\circ B\equiv \bar{t}\circ a\]
where $B'$ is a subword of $B$ and $A'$ is a subword of $A$. It is convenient to define
\[B'=B'_1\circ B'_2\mbox{ and }A'=A'_1\circ A'_2\]
where  $B'_1$ is a subword of $t_{u_1\psi}a_{u_1\psi}\circ \cdots \circ t_{u_p\psi}a_{v_p\psi}$ and
$B_2'$ is a subword of $a^N$ and where similarly $A'_1$ is a subword of $s_{i_1\theta}a_{i_1\theta} \circ \dots\circ s_{i_h\theta}a_{i_h\theta}$ and
$A_2'$ is a subword of $a^M$.

\begin{Lem}\label{lem:mathch} Suppose that the letter $s_{i_l\theta}a_{i_l\theta}$ of $A$ shuffles to some $t_{u_j\psi}a_{u_j\psi}$ of $B$ in the shuffle
from $B'\circ A$ to $A'\circ B$. Then $i_l\theta=u_j\psi$ and $s_{i_l\theta}a_{i_l\theta}=t_{u_j\psi}a_{u_j\psi}$ shuffles to the front of $A'\circ B$ and $B'\circ A$.
\end{Lem}
\begin{proof} Let $\alpha=\supp(a_{i_l\theta})=\supp(a_{u_j\psi})$. In the reduction of $s\circ a$, by  virtue of $a_{i_l\theta}$ not having been deleted, neither has any element of $M_{\alpha}$ occurring to the right of $a_{i_l\theta}$, that is,
any $a_o\in M_{\alpha}$ with ${i_l\theta}<o$. Similarly, in  the reduction of $t\circ a$, by  virtue of $a_{u_j\psi}$ not having been deleted, neither has any element of $M_{\alpha}$ occurring to the right of $a_{u_j\psi}$, that is,
any $a_o\in M_{\alpha}$ with ${u_j\psi}<o$. It follows that the number $r_A$ of elements of
$M_{\alpha}$ to the right of $s_{i_l\theta}a_{i_l\theta}$ in $B'\circ A$ and hence in $A$ is the same as the number of elements of $M_{\alpha}$
to the right of $a_{i_l\theta}$ in $a$. Similarly,  the number $r_B$ of elements of
$M_{\alpha}$ to the right of $t_{u_j\psi}a_{u_j\psi}$ in $A'\circ B$ and hence in $B$ is the same as the number of elements of $M_{\alpha}$
to the right of $a_{u_j\psi}$ in $a$.
Our assumption is that in the shuffle from $B'\circ A$ to $A'\circ B$, we have that $s_{i_l\theta}a_{i_l\theta}$ shuffles to  $t_{u_j\psi}a_{u_j\psi}$. It follows from the above and the remark before Proposition \ref{easy observation} that $r_A=r_B$ and consequently,
$i_l\theta=u_j\psi$.
The remaining statement follows from Lemma~\ref{lem:thedoubleshuffle}.
\end{proof}

\begin{Lem}\label{lem:setto1} If in  the shuffle from $B'\circ A$ to $A'\circ B$ we have that
 $s_{i_l\theta}a_{i_l\theta}$ shuffles to a letter $a_j$ of $a^N$, then $j=i_l\theta$. In particular, $i_l\theta\notin \{ v_1\psi, \cdots, v_q\psi\}$.

 Suppose that the letter  $s_{i_l\theta}a_{i_l\theta}$ of $B'\circ A$ does not shuffle to the front.  Then in the shuffle $B'\circ A\equiv A'\circ B$ we have that
 $s_{i_l\theta}a_{i_l\theta}$ shuffles to the letter $a_{i_l\theta}$ of $a^N$. In particular, $i_l\theta\notin \{ v_1\psi, \cdots, v_q\psi\}$.
\end{Lem}
\begin{proof} The same argument as  in Lemma \ref{lem:mathch} gives the first part.

 Suppose that the letter  $s_{i_l\theta}a_{i_l\theta}$ of $B'\circ A$ does not shuffle to the front. Therefore, in the shuffle from $B'\circ A$ to $A'\circ B$, we cannot have that $s_{i_l\theta}a_{i_l\theta}$ shuffles to $A'$. If $s_{i_l\theta}a_{i_l\theta}$
shuffles to some $t_{u_j\psi}a_{u_j\psi}$ then by  Lemma~\ref{lem:mathch} it would have to shuffle to the front. We conclude that $s_{i_l\theta}a_{i_l\theta}$ shuffles to some letter $a_{i_l\theta}$ of $a^N$.
\end{proof}

We follow the process outlined in Lemma~\ref{lem:thedoubleshuffle}, with $A$ in place of $a$ and $B$ in place of $b$, and with $A^{\ell}= s_{i_1\theta}a_{i_1\theta}\circ \dots \circ s_{i_h\theta}a_{i_h\theta}$ and $B^{\ell}= t_{u_1\psi}a_{u_1\psi}\circ \dots \circ t_{u_p\psi}a_{u_p\psi}$.
Let
\[s_{i_1\theta}\circ \dots \circ s_{i_h\theta} =s_1\circ s_2\mbox{ and }
t_{u_1\psi}\circ \dots \circ t_{u_p\psi} =t_1\circ t_2 \] where
$s_1$ is the product of all $s_{i_l\theta}$ such that  ${i_l\theta}\in K_A$, that is, $s_{i_l\theta}a_{i_l\theta}$ shuffles to some $a_j$ in $a^N$,  (and hence by Lemma~\ref{lem:setto1} to  $a_{i_l\theta}$) in the shuffle from $B'\circ A$ to $A'\circ B$;  and let $s_2$ denote the product of the remaining letters in
$s_{i_1\theta}\circ \dots \circ s_{i_h\theta} $; the words $t_1$ and $t_2$ are defined dually.  From Lemma~\ref{lem:setto1} we have that  for any $s_{i_l\theta}$ in $s_2$ we have that  $s_{i_l\theta}a_{i_l\theta}$ shuffles to the front of $B'\circ A$ and dually, for any $t_{u_k\psi}$ in $t_2$, we have that
$t_{u_k\psi}a_{u_k\psi}$ shuffles to the front of $A'\circ B$. Considerations of support immediately give the following.

\begin{Cor}\label{cor:s_2tofront} We have  $B'\circ s_2\equiv s_2\circ B'$ and   dually,  $A'\circ t_2\equiv t_2\circ A'$. 
\end{Cor}

The following comes from repeated applications of Lemma~\ref{lem:redfuneasy}. 

\begin{Lem}\label{lem:red3}  With definitions as above we have
\[ [s_1  \circ a'_{j_k\theta}\circ \dots\circ a'_{j_1\theta}]\,\lambda_{T}\,
[a'_{j_k\theta}\circ \dots\circ a'_{j_1\theta}].\]
and
\[ [t_1  \circ a'_{v_q\psi}\circ \dots\circ a'_{v_1\psi}]\, \lambda_{T} \,
[a'_{v_q\psi}\circ \dots\circ a'_{v_1\psi}].\]
\end{Lem}

\begin{defn}\label{defn:tricky} For a parallel pair of reduction functions $(\theta,\psi)$  and $B'$ and $A'$ defined as above, we let
$B'_r$ be $B'$ where we replace any $t_{u_k\psi}a_{u_k\psi}$ in $B_1$ by $a_{u_k\psi}$ (so that $B_r'$ is a product of letters of $a$)
and let $A'_r$ be $A'$ where  we replace any $s_{i_l\theta}a_{i_l\theta}$ in $B_1$ by $a_{i_l\theta}$ (so that $A_r'$ is a product of letters of $a$). We then set
\[z^{\theta,\psi}_\theta=B'_r\circ a'_{j_k\theta}\circ \dots\circ a'_{j_1\theta}\mbox{ and }z^{\theta,\psi}_\psi=A'_r\circ a'_{v_q\psi}\circ \dots\circ a'_{v_1\psi}.\]
\end{defn}

\begin{Lem}\label{lem:zz'} We have that $ ([z^{\theta,\psi}_{\theta}],[z^{\theta,\psi}_{\psi}])\in \mathbf{l}([a])$.
\end{Lem}
\begin{proof} We have that
\[\begin{array}{rcl}
[z^{\theta,\psi}_{\theta}][ a]=[z^{\theta,\psi}_{\theta}\circ a]&=&[ B_r'\circ a'_{j_k\theta}\circ \dots\circ a'_{j_1\theta} \circ a_{j_1\theta}\circ \dots\circ a_{j_k\theta}\circ a_{i_1\theta}\circ\cdots\circ  a_{i_h\theta}\circ a^M]\\
&=&[B_r'\circ a_{i_1\theta}\circ\cdots\circ  a_{i_h\theta}\circ a^M]
\end{array}\]
and similarly
\[[z^{\theta,\psi}_{\psi}][a]=[ A_r' \circ a_{u_1\psi}\circ\cdots\circ  a_{u_p\psi}\circ a^N].\]
Let $A_r$ be obtained from $A$ by replacing any $s_{j_l\theta}a_{j_l\theta}$ by $a_{j_l\theta}$ and let
$B_r$ be obtained from $B$ by replacing any $t_{u_l\psi}a_{u_l\psi}$ by $a_{u_l\psi}$, so that 
$[z^{\theta,\psi}_{\theta}\circ a]=[B_r'\circ A_r]$ and $[z^{\theta,\psi}_{\psi}\circ a]=[A_r'\circ B_r]$. 
We know that $A'\circ B\equiv B'\circ A$. ~An application of Lemma~\ref{lem:reduction-3} now gives that
$A'_r\circ B_r\equiv B'_r\circ A_r$. Consequently, $[(z^{\theta,\psi}_{\theta}],[z^{\theta,\psi}_{\psi}])\in \mathbf{l}([a])$, as claimed. 
\end{proof}

We now let
\[K=P\cup \{ ([z^{\theta,\psi}_{\theta}],[z^{\theta,\psi}_{\psi}]): (\theta,\psi)\mbox{ is a parallel reduction pair}\}.\]

We return to consideration of  $\bar{s}$. We have 
\[\begin{array}{rcl}
[\bar{s}]&=& [B'\circ s_2\circ s_1\circ   s_{j_k\theta}\circ \dots\circ s_{j_1\theta}]\\
&\lambda_K&   [B'\circ s_2 \circ s_1 \circ a'_{j_k\theta}\circ \dots\circ a'_{j_1\theta}]\\
&\lambda_K & [B'\circ s_2 \circ a'_{j_k\theta}\circ \dots\circ a'_{j_1\theta}]\\
&=& [ B_1'\circ s_2\circ B_2'\circ a'_{j_k\theta}\circ \dots\circ a'_{j_1\theta}]\end{array} \]
where notice that $B_2'$ just consists of fixed letters of $a$.

Similarly,

\[[\bar{t}]\,  \lambda_K\,  [A'\circ t_2 \circ a'_{v_q\psi}\circ \dots\circ a'_{v_1\psi}]= [A_1'\circ t_2\circ A_2'\circ a'_{v_q\psi}\circ \dots\circ a'_{v_1\psi}] \]
where $A_2'$ just consists of fixed letters of $a$.

We now return to examine $s_2$ and $t_2$. We have that $s_2$ is the product of the letters $s_{i_l\theta}$ in
$s_{i_1\theta}\circ \dots \circ s_{i_h\theta} $ such that $s_{i_l\theta}a_{i_l\theta} $ does not shuffle to $a^N$ in the shuffle $B'\circ A$ to $A'\circ B$.  Let $a_2$  be the subword of $a$ with the same support as $s_2$. We further divide $s_2$ as a product $s_3\circ s_4$. Here  $s_3$ is the product of all the letters $s_{i_l\theta}$  such that $s_{i_l\theta}a_{i_l\theta} $ shuffles to some $t_{u_j\psi}a_{u_j\psi}$, that is, $i_l\theta\in I_A$ (recall that Lemma~\ref{lem:mathch} gives in this case $i_l\theta=u_j\psi$). Correspondingly,  $s_4$ denotes the remaining letters, that is, those $s_{i_l\theta}$ such that 
${i_l\theta}\in J_A$, so that $s_{i_l\theta}a_{i_l\theta} $ shuffles to a letter in $A'$. We define $a_3$ and $a_4$ in the corresponding way. Then 
$s_2\circ a_2$, $s_3\circ a_3$ and $s_4\circ a_4\equiv A_1'$ are the corresponding product of the letters in  $s_{i_1\theta}a_{i_1\theta}\circ \dots \circ s_{i_h\theta}a_{i_h\theta} $ (by virtue of the completeness of the underlying block). The words $t_2,t_3$ and $t_4$ and $a^2, a^3=a_3$ and $a^4$ are defined dually.

The above considerations give us that $s_3a_3\equiv t_3a_3$. For each $i_{l\theta}\in\supp(s_3)=\supp(t_3)$  
have $s_{i_l\theta}a_{i_l\theta}=t_{i_l\theta}a_{i_l\theta}$ and so $(s_{i_l\theta},t_{i_l\theta})\in\mathbf{l}(a_{i_l\theta})$.
Repeated applications of Lemma~\ref{lem:redfuneasy} yield  that 
\[[s_3\circ a'_{j_k\theta}\circ \dots\circ a'_{j_1\theta}]\, \lambda_K\, [t_3\circ a'_{j_k\theta}\circ \dots\circ a'_{j_1\theta}]\]
and similarly
\[[s_3\circ a'_{v_q\psi}\circ \dots\circ a'_{v_1\psi}]\, \lambda_K\, [t_3\circ a'_{v_q\psi}\circ \dots\circ a'_{v_1\psi}]\]

\begin{Lem}\label{lem:finally}  We have that 
$[s]\,\lambda_K\, [t].$
\end{Lem}
{\begin{proof} We know that $([s],[t])=([w][\bar{s}],[w]\bar{t}])$, so it is enough to show that $[\bar{s}]\,\lambda_K\, [\bar{t}].$
We have
\[\begin{array} {rcl}
[\bar{s}]&\lambda_K&[B'\circ s_2\circ a'_{j_k\theta}\circ \dots\circ a'_{j_1\theta}]\\
&=&[B'\circ s_4\circ s_3\circ a'_{j_k\theta}\circ \dots\circ a'_{j_1\theta}]\\
&=& [B'\circ s_4][s_3\circ a'_{j_k\theta}\circ \dots\circ a'_{j_1\theta}]\\
&\lambda_K& [B'\circ s_4][t_3\circ a'_{j_k\theta}\circ \dots\circ a'_{j_1\theta}]\\
&=& [B'\circ s_4\circ t_3\circ a'_{j_k\theta}\circ \dots\circ a'_{j_1\theta}]\\
&=& [s_4\circ t_3\circ B'\circ a'_{j_k\theta}\circ \dots\circ a'_{j_1\theta}]\\
&=& [s_4\circ t_3\circ B_1'\circ B_2'\circ a'_{j_k\theta}\circ \dots\circ a'_{j_1\theta}]\\
&=& [s_4\circ t_3\circ t_4a^4 \circ B_2'\circ a'_{j_k\theta}\circ \dots\circ a'_{j_1\theta}]\\
&=& [s_4\circ t_3\circ t_4\circ a^4 \circ B_2'\circ a'_{j_k\theta}\circ \dots\circ a'_{j_1\theta}]\\
&=& [s_4\circ t_3\circ t_4\circ B'_r\circ a'_{j_k\theta}\circ \dots\circ a'_{j_1\theta}]\\
&=& [s_4\circ t_3\circ t_4\circ z^{\theta,\psi}_{\theta}]\\
&\lambda_K& [s_4\circ t_3\circ t_4\circ z^{\theta,\psi}_{\psi}]\\
\end{array}\]using Corollary~\ref{cor:s_2tofront} and the fact that $s_3$ and $t_3$ have the same support.
By not changing $s_3$ to $t_3$ at line 4 in the above display, we also have
\[[\bar{s}]\,\lambda_K\, [s_4\circ s_3\circ t_4\circ z^{\theta,\psi}_{\psi}]\]
and so by duality, 
\[[\bar{t}]\,\lambda_K\, [s_4\circ t_3\circ t_4\circ z^{\theta,\psi}_{\theta}]\] finally yielding 
$[\bar{s}]\,\lambda_K\,[\bar{t}]$, and so $[{s}]\,\lambda_K\,[{t}]$, as required.
\end{proof}} 
\end{proof}

The above finishes the proof of Theorem~\ref{thm:aagh}. We may immediately deduce the corresponding result for free products and restricted direct products.

\begin{Cor} cf. \cite{dasar:2024}\label{cor:aagh}  A free product or restricted direct product of monoids $\{ M_i:i\in I\}$ is finitely left equated if and only if so is each $M_i,i\in I$.
\end{Cor}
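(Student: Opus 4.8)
The plan is to derive this corollary directly from Theorem~\ref{thm:aagh}, since both free products and restricted direct products are special cases of graph products obtained by making extremal choices of the edge set. Concretely, I would appeal to Remark~\ref{rem:freeprod}: taking the graph $\Gamma = (I, \emptyset)$ with vertex set $I$ and no edges yields a graph product $\mathscr{GP}(\Gamma, \mathcal{M})$ isomorphic to the free product of the monoids $\{M_i : i \in I\}$, while taking the graph $\Gamma = (I, I \times I)$ with all possible edges yields a graph product isomorphic to the restricted direct product.

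First I would fix the index set $I$ as the vertex set and the collection $\mathcal{M} = \{M_i : i \in I\}$ as the vertex monoids. For the free-product statement, I instantiate Theorem~\ref{thm:aagh} with $E = \emptyset$: the theorem says $\mathscr{GP}(\Gamma, \mathcal{M})$ is finitely left equated if and only if each vertex monoid $M_i$ is finitely left equated, and by Remark~\ref{rem:freeprod} this graph product is (isomorphic to) the free product. For the restricted-direct-product statement, I instantiate the same theorem with $E = I \times I$, where Remark~\ref{rem:freeprod} identifies the graph product with the restricted direct product. Since being finitely left equated is an isomorphism invariant (it is defined purely in terms of the multiplication and left annihilators), transporting the equivalence across these isomorphisms gives exactly the two claimed biconditionals.

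There is no real obstacle here: all of the genuine difficulty is already absorbed into the proof of Theorem~\ref{thm:aagh}, which handles arbitrary graphs and hence in particular the two extreme cases. The only point worth stating explicitly is the identification of the free product and restricted direct product as graph products, which is precisely the content of the first two bullet points of Remark~\ref{rem:freeprod}. I would therefore present the proof as a single short paragraph observing these identifications and then quoting Theorem~\ref{thm:aagh}, noting that the ``Cf.~\cite{dasar:2024}'' merely acknowledges that these special cases were previously known by other means.
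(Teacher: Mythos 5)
Your proposal is correct and matches the paper exactly: the paper deduces the corollary immediately from Theorem~\ref{thm:aagh} by specialising to the extremal graphs $E=\emptyset$ and $E=V\times V$, which Remark~\ref{rem:freeprod} identifies with the free product and restricted direct product respectively. Your additional remark that being finitely left equated is an isomorphism invariant is a harmless explicit statement of a point the paper leaves tacit.
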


\section{Weak left coherency} \label{sec:wlc}

A monoid $M$ is said to be {\it weakly left  coherent} if every finitely generated left
 ideal of $M$ is finitely presented as a left $M$-act.  It was shown in  \cite[Corollary 3.3]{gould:1992} that a monoid $M$ is weakly left coherent if and only if $M$ is both left ideal Howson and finitely left equated. As we mentioned earlier, for rings the notions of being weakly left  coherent and left coherent coincide. However,  left coherency for monoids is strictly
 stronger than weak left coherency. For instance, infinite full transformation monoids are
 weakly left coherent \cite[Corollary 3.6, Theorem 3.7]{gould:1992}  but not left coherent \cite[Theorem 3.4]{brookes:2025}. Further, free inverse monoids of rank $\geq 2$ are weakly left coherent but not
left coherent  \cite[ Theorem 7.5]{gould:2017}.

Combining Theorems \ref{thm:howson} and \ref{thm:aagh}, we obtain:

\begin{Thm}\label{thm:wlc} A graph product of monoids $\mathscr{GP}=\mathscr{GP}(\Gamma,\mathcal{M})$ is weakly left coherent if and only if so is each vertex monoid.
\end{Thm}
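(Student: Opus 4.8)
The plan is to derive Theorem~\ref{thm:wlc} as an immediate corollary of the two main structural theorems already established, together with the characterisation of weak left coherency recalled at the start of this section. The key observation is that weak left coherency decomposes cleanly: by \cite[Corollary 3.3]{gould:1992}, a monoid $M$ is weakly left coherent if and only if $M$ is simultaneously left ideal Howson and finitely left equated. This reduces the problem to verifying each of these two conjuncts separately for the graph product, and for each we already have an exact ``if and only if'' transfer result.

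The argument proceeds in three short steps. First I would invoke Theorem~\ref{thm:howson}, which states that $\mathscr{GP}(\Gamma,\mathcal{M})$ is left ideal Howson if and only if each vertex monoid $M_\alpha$ is left ideal Howson. Second, I would invoke Theorem~\ref{thm:aagh}, which gives that $\mathscr{GP}(\Gamma,\mathcal{M})$ is finitely left equated if and only if each vertex monoid is finitely left equated. Third, I would combine these with the characterisation above, applied both to $\mathscr{GP}$ and to each individual $M_\alpha$: the graph product is weakly left coherent precisely when it is both left ideal Howson and finitely left equated, which by the first two theorems holds precisely when every vertex monoid is both left ideal Howson and finitely left equated, which in turn (again by \cite[Corollary 3.3]{gould:1992}) holds precisely when every vertex monoid is weakly left coherent.

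Concretely, the chain of equivalences reads
\[
\mathscr{GP}\text{ is weakly left coherent}
\iff
\mathscr{GP}\text{ is left ideal Howson and finitely left equated},
\]
which by Theorems~\ref{thm:howson} and~\ref{thm:aagh} is equivalent to each $M_\alpha$ being left ideal Howson and finitely left equated, and hence, by \cite[Corollary 3.3]{gould:1992} applied to each vertex monoid, to each $M_\alpha$ being weakly left coherent. Since each step is a biconditional, the composite statement is itself a biconditional, completing the proof.

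There is essentially no obstacle here, since all the genuine work has been carried out in the preceding sections; the only point requiring minor care is that the characterisation from \cite{gould:1992} must be applied in both directions and at both the level of $\mathscr{GP}$ and the level of the vertex monoids, but this is purely formal. The substantive content of the theorem lies entirely in Theorems~\ref{thm:howson} and~\ref{thm:aagh}, whose proofs supply the technical heart of the paper.
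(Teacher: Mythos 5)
Your proposal is correct and is exactly the paper's argument: the paper deduces Theorem~\ref{thm:wlc} by combining Theorems~\ref{thm:howson} and~\ref{thm:aagh} with the characterisation from \cite[Corollary 3.3]{gould:1992} that weak left coherency is the conjunction of being left ideal Howson and finitely left equated, applied at both the level of $\mathscr{GP}$ and of the vertex monoids. Nothing is missing; the chain of biconditionals you give is the whole proof.
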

\begin{Cor}\label{cor:wlc} \cite{dasar:2024} A free product or restricted direct product of monoids $\{ M_i:i\in I\}$ is weakly left coherent if and only if so is each $M_i,i\in I$.
\end{Cor}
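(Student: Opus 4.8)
The plan is to deduce the result directly from the two theorems just established, together with the known structural characterisation of weak left coherency. The starting point is \cite[Corollary 3.3]{gould:1992}, which tells us that a monoid is weakly left coherent if and only if it is simultaneously left ideal Howson and finitely left equated. This splits the property of interest into two components, each of which has already been analysed in isolation in the preceding sections.

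Concretely, I would argue as follows. Suppose first that each vertex monoid is weakly left coherent. By \cite[Corollary 3.3]{gould:1992} each vertex monoid is then both left ideal Howson and finitely left equated. Theorem~\ref{thm:howson} now gives that $\mathscr{GP}$ is left ideal Howson, and Theorem~\ref{thm:aagh} gives that $\mathscr{GP}$ is finitely left equated. A second application of \cite[Corollary 3.3]{gould:1992}, this time to $\mathscr{GP}$ itself, yields that $\mathscr{GP}$ is weakly left coherent. The converse runs along identical lines: if $\mathscr{GP}$ is weakly left coherent then it is left ideal Howson and finitely left equated, whence by the forward directions of Theorems~\ref{thm:howson} and \ref{thm:aagh} so is each vertex monoid, and a final appeal to \cite[Corollary 3.3]{gould:1992} shows that each vertex monoid is weakly left coherent.

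Since all the genuine difficulty has been absorbed into Theorems~\ref{thm:howson} and \ref{thm:aagh}, there is no substantive obstacle remaining at this stage; the only point requiring care is applying the equivalence of \cite[Corollary 3.3]{gould:1992} to the correct monoid (the graph product in one direction, the vertex monoids in the other) and matching the `if' and `only if' halves of the two preceding theorems with the corresponding halves of the statement. The corollary for free and restricted direct products, Corollary~\ref{cor:wlc}, then follows immediately by specialising $\Gamma$ to the edgeless graph and to the complete graph respectively, as recorded in Remark~\ref{rem:freeprod}.
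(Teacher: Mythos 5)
Your proposal is correct and follows exactly the paper's route: Theorem~\ref{thm:wlc} is obtained by combining Theorems~\ref{thm:howson} and~\ref{thm:aagh} through the equivalence of \cite[Corollary 3.3]{gould:1992}, and the corollary is then the specialisation of $\Gamma$ to the edgeless and complete graphs as in Remark~\ref{rem:freeprod}. Nothing further is needed.
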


We have concentrated in this article on finitary conditions arising from and related to left  noetherianity, the  {\em ascending} chain condition on the lattice of left congruences of a monoid. There are, of course, conditions that arise from the {\em descending} chain condition on the lattice of left congruences or, indeed, chain conditions on the lattice of (two-sided) congruences. We suggest that an investigation of these conditions in the context of direct products, and then potentially that of  graph and free products, would be worthwhile. It is known that the direct product of two Artinian groups (that is, groups with no infinite strictly  descending chain of subgroups) is Artinian \cite{robinson:1995}. The corresponding result is not known for monoids, in the case of left ideals or of left congruences. It is clear, however, that a free product of two non-trivial monoids has an infinite descending chain of principal left ideals.

\section{Acknowledgements} We are grateful to the referee for a careful reading of the manuscript and to  Levent Dasar for pointing out an error in the original formulation of Proposition~\ref{key15}. The first author would like to thank the Department of Mathematics of the University of York for hosting her 
whilst this work was in development.

\end{document}